\documentclass[12pt]{amsart}
\usepackage{amsmath,amsthm,amsfonts,amscd,amssymb,eucal,latexsym,mathrsfs, appendix, yhmath}
\usepackage[numbers,sort&compress]{natbib}
\usepackage[all,cmtip]{xy}
\usepackage{color}

\setlength{\textwidth}{15cm}
\setlength{\oddsidemargin}{4mm}
\setlength{\evensidemargin}{4mm}

\newtheorem{theorem}{Theorem}[section]
\newtheorem{corollary}[theorem]{Corollary}
\newtheorem{lemma}[theorem]{Lemma}
\newtheorem{proposition}[theorem]{Proposition}
\newtheorem{remark}[theorem]{Remark}

\theoremstyle{definition}

\newtheorem{question}[theorem]{Question}

\newcommand{\Zb}{{\mathbb Z}}

\newcommand{\Nb}{{\mathbb N}}

\newcommand{\cB}{{\mathcal B}}
\newcommand{\cU}{{\mathcal U}}

\newcommand{\cV}{{\mathcal V}}

\newcommand{\cW}{{\mathcal W}}

\newcommand{\cE}{{\mathcal E}}
\newcommand{\cA}{{\mathcal A}}
\newcommand{\cD}{{\mathcal D}}

\newcommand{\fF}{{\mathfrak F}}

\newcommand{\sym}{{\rm Sym}}
\newcommand{\map}{{\rm Map}}

\newcommand{\cC}{{\mathcal C}}
\newcommand{\cF}{{\mathcal F}}

\allowdisplaybreaks
\numberwithin{equation}{section}

\begin{document}
\title[weak expansiveness for actions of sofic groups]{weak expansiveness for actions of sofic groups}
\author[Nhan-Phu Chung and Guohua Zhang]{Nhan-Phu Chung and Guohua Zhang}

\address{\hskip- \parindent
Nhan-Phu Chung, Department of Mathematics, University of Sciences, Vietnam National University at Ho Chi Minh city, Vietnam}
\email{cnphu@hcmus.edu.vn}

\address{\hskip- \parindent
Guohua Zhang, School of Mathematical Sciences and LMNS, Fudan University and Shanghai Center for Mathematical Sciences, Shanghai 200433, China}
\email{chiaths.zhang@gmail.com}

\date{\today}

\begin{abstract}
In this paper, we shall introduce $h$-expansiveness and asymptotical $h$-expansiveness for actions of sofic groups. By definition, each $h$-expansive action of a sofic group is asymptotically $h$-expansive. We show that each expansive action of a sofic group is $h$-expansive, and, for any given asymptotically $h$-expansive action of a sofic group, the entropy function (with respect to measures) is upper semi-continuous and hence the system admits a measure with maximal entropy.

Observe that asymptotically $h$-expansive property was first introduced and studied by Misiurewicz for $\mathbb{Z}$-actions using the language of tail entropy. And thus in the remaining part of the paper, we shall compare our definitions of weak expansiveness for actions of sofic groups with the definitions given in the same spirit of Misiurewicz's ideas when the group is amenable. It turns out that these two definitions are equivalent in this setting.
\end{abstract}

\keywords{actions of sofic groups, expansive, $h$-expansive, asymptotically $h$-expansive, measures with maximal entropy, actions of amenable groups}

\subjclass[2010]{Primary 37B05; Secondary 37C85, 54H15}



\maketitle


\section{Introduction}

Dynamical system theory is the study of qualitative properties of group actions on spaces
with certain structures. In this paper, by a \emph{topological dynamical system} we mean a continuous action of a countable discrete sofic group on a compact metric space.
Sofic groups were defined implicitly by Gromov in \cite{Gro} and explicitly by Weiss in \cite{Wei}. They include all amenable groups and residually finite groups.

Recently, Lewis Bowen introduced a notion of entropy for measure-preserving actions of a countable discrete sofic group admitting a generating measurable partition with finite entropy \cite{Bowen10, BETDS}. The main idea here is to replace the important F\o lner sequence of a countable discrete amenable group with a sofic approximation for a countable discrete sofic group. Very soon after \cite{Bowen10}, in the spirit of L. Bowen's sofic measure-theoretic entropy, Kerr and Li developed an operator-algebraic approach to sofic entropy \cite{KL11Inven, KLAJM} which applies not only to continuous actions of countable discrete sofic groups on compact metric spaces but also to all measure-preserving actions of countable discrete sofic groups on standard probability measure spaces. From then on, there are many other papers, presenting different but equivalent definitions of sofic entropy \cite{KGGD, Zhang12}, extending sofic entropy to sofic pressure \cite{CETDS} and to sofic mean dimension \cite{LAM}, and discussing combinatorial independence for actions of sofic groups \cite{KLCMS}.

\medskip

Let $X$ be a compact metric space. Any homeomorphism $T: X\rightarrow X$ generates naturally a topological dynamical system by considering the group $\{T^n: n\in \mathbb{Z}\}$. Even the given map $T: X\rightarrow X$ is just a continuous map (may be non-invertible), we still call it a topological dynamical system by considering the semi-group $\{T^n: n\in \mathbb{Z}_+\}$.
A self-homeomorphism of a compact metric space is \emph{expansive} if, for each pair of distinct points, some iterate of the homeomorphism separates them by a definite amount. Expansiveness is in fact a multifaceted dynamical condition which plays a very important role in the exploitation of hyperbolicity in smooth dynamical systems \cite{Mane}.
In the setting of considering a continuous mapping over a compact metric space, two classes of weak expansiveness, the
$h$-expansiveness and asymptotical $h$-expansiveness, were introduced by Rufus Bowen \cite{Bex} and Misiurewicz \cite{Mi1}, respectively.
By definition, each $h$-expansive system is asymptotically $h$-expansive.
Both of $h$-expansiveness and asymptotical $h$-expansiveness turn out to be important in the study of smooth dynamical systems \cite{Burguet, DFPV, DMaass, DownNew, LVY}.

It is direct to define expansiveness for actions of groups. That is, let $G$ be a discrete group acting on a compact metric space $X$ (with the metric $\rho$), then we say that \emph{$(X, G)$ is expansive} if there is $\delta> 0$ such that for any two different points $x_1$ and $x_2$ in $X$ there exists $g\in G$ with $\rho (g x_1, g x_2)> \delta$. Such a $\delta$ is called an \emph{expansive constant}. Symbolic systems are standard examples for expansive actions. This obvious extension of the notion of expansiveness has been investigated extensively in algebraic actions for $\Zb^d$ \cite{ES1,LS,Schmidt90,Schmidt} and for more general groups \cite{Bo11,ChungLi,DS}; and a general framework of dynamics of $d\in \mathbb{N}$ commuting homeomorphisms over a compact metric space, in terms of expansive behavior along lower dimensional subspaces of $\mathbb{R}^d$, was first proposed by Boyle and Lind \cite{Blind}.

Now, a natural question rises: how to define weak expansiveness using entropy techniques when considering actions of countable sofic groups?

\medskip

The problem is addressed in this paper. When considering a continuous mapping over a compact metric space, R. Bowen introduced $h$-expansiveness using separated and spanning subsets by considering topological entropy of special subsets \cite{Bex}, and then Misiurewicz introduced asymptotical $h$-expansiveness, which is weaker than $h$-expansiveness, using open covers by introducing tail entropy \cite{Mi1}. Observe that the notion of tail entropy was first introduced by Misiurewicz in \cite{Mi1} where he called it topological conditional entropy, and then Buzzi called this quantity local entropy in \cite{Bu}. Here, we follow Downarowicz and Serafin \cite{DowSer} and Downarowicz \cite{Dow11}. It was Li who first used open covers for actions of sofic groups to consider sofic mean dimension \cite{LAM}, and then this idea was used in \cite{Zhang12} to consider equivalently the entropy for actions of sofic groups. To fix the problem of defining weak expansiveness naturally for actions of sofic groups, we shall use open covers again to introduce the properties of $h$-expansiveness and asymptotical $h$-expansiveness in the spirit of Misiurewicz \cite{Mi1}. The idea turns out to be successful. From definition each $h$-expansive action of a sofic group is asymptotically $h$-expansive;
we shall prove that each expansive action of a sofic group is $h$-expansive (Theorem \ref{1207121257}), and hence, $h$-expansiveness and asymptotical $h$-expansiveness are indeed two classes of weak expansiveness. Additionally, similar to the setting of considering a continuous mapping over a compact metric space,
 for any given asymptotically $h$-expansive action of a sofic group, the entropy function (with respect to measures) is upper semi-continuous (Theorem \ref{1207121342}) and hence the system admits a measure with maximal entropy.

 Observe that the asymptotically $h$-expansive property was first introduced and studied by Misiurewicz for $\mathbb{Z}$-actions using the language of tail entropy. We can define tail entropy for actions of amenable groups in the same spirit, and so it is quite natural to ask if we could define asymptotical $h$-expansiveness for actions of amenable groups along the line of tail entropy. The answer turns out to be true, that is, our definitions of weak expansiveness for actions of sofic groups are equivalent to the definitions given in the same spirit of Misiurewicz's ideas of using tail entropy when the group is amenable (Theorem \ref{1306160126}).
In \cite{Mi1} Misiurewicz provided a typical example of an asymptotically $h$-expansive system, that is, any continuous endomorphism of a compact metric group with finite entropy is asymptotically $h$-expansive. We shall show that in fact this holds in a more general setting with the help of Theorem \ref{1306160126}, precisely, any action of a countable discrete amenable group acting on a compact metric group by continuous automorphisms is asymptotically $h$-expansive if and only if the action has finite entropy (Theorem \ref{1307122130}).

\medskip

The paper is organized as follows. In section 2 we prove that each expansive action of a sofic group admits a measure with maximal entropy based on the sofic measure-theoretic entropy introduced in \cite{Bowen10}. In section 3 we introduce $h$-expansive and asymptotically $h$-expansive actions of sofic groups in the spirit of Misiurewicz \cite{Mi1}. Each $h$-expansive action of a sofic group is asymptotically $h$-expansive by the definitions. We show that each expansive action of a sofic group is $h$-expansive, and each asymptotically $h$-expansive action of a sofic group admits a measure with maximal entropy (in fact, its entropy function is upper semi-continuous with respect to measures). In section 4 we present our first interesting non-trivial example of an $h$-expansive action of a sofic group which is in fact the profinite action of a countable group.
In order to understand further our introduced weak expansiveness for actions of sofic groups in the setting of amenable groups, in section 5 we define tail entropy for actions of amenable groups in the same spirit of Misiurewicz. And then in section 6 we compare our definitions of weak expansiveness for actions of sofic groups with the definitions given in section 5 when the group is amenable. It turns out that these two definitions are equivalent in this setting. And then in section 7 we show that any action of a countable discrete amenable group acting on a compact metric group by continuous automorphisms is asymptotically $h$-expansive if and only if the action has finite entropy.

\section{Expansive actions of sofic groups}

 Let $G$ be a countable discrete group.
For each $d\in\Nb$, denote by $\sym(d)$ the permutation group of $\{1, \cdots, d\}$. We say that \emph{$G$ is sofic} if there is a sequence $\Sigma=\{\sigma_i: G \rightarrow \sym(d_i), g\mapsto \sigma_{i, g}, d_i\in \Nb\}_{i\in \Nb}$ such that
$$\lim_{i\rightarrow \infty}\frac{1}{d_i} |\{a\in \{1, \cdots, d_i\}: \sigma_{i,s}\sigma_{i,t}(a)=\sigma_{i,st}(a)\}|=1\ \text{for all}\ s, t\in G$$
and
$$\lim_{i\rightarrow \infty}\frac{1}{d_i} |\{a\in \{1, \cdots, d_i\}: \sigma_{i,s}(a)\neq \sigma_{i,t}(a)\}|=1\ \text{for all distinct}\ s, t\in G.$$
Here, by $|\bullet|$ we mean the cardinality of a set $\bullet$. Such a sequence $\Sigma$ with $\lim_{i\rightarrow \infty} d_i= \infty$ is referred as a \emph{sofic approximation of $G$}. Observe that the condition $\lim_{i\rightarrow \infty} d_i= \infty$ is essential for the variational principle concerning entropy of actions of sofic groups (see \cite{KL11Inven} and \cite{Zhang12} for the global and local variational principles, respectively), and it is automatic if $G$ is infinite.

\medskip

{\it Throughout the paper, $G$ will be a countable discrete sofic group, with a fixed sofic approximation $\Sigma$ as above and $G$ acts on a compact metric space $(X, \rho)$.
}

\medskip

In this section, based on the sofic measure-theoretic entropy introduced in \cite{Bowen10}, we mainly prove that, for an expansive action of a sofic group, the entropy function is upper semi-continuous with respect to measures, and hence the action admits a measure with maximal entropy. Additionally, we show that in general the entropy function of a finite open cover is also upper semi-continuous with respect to measures.

Denote by $M (X)$ the set of all Borel probability measures on $X$, which is a compact metric space if endowed with the well-known weak star topology; and by $M (X, G)$ the set of all $G$-invariant elements $\mu$ in $M (X)$, i.e., $\mu (A)= \mu (g^{- 1} A)$ for each $g\in G$ and all $A\in \mathcal{B}_X$, where $\mathcal{B}_X$ is the Borel $\sigma$-algebra of $X$. Note that if $M (X, G)\neq \emptyset$ then it is a compact metric space.

\medskip

For a set $Y$, we denote by $\fF_Y$ the set of all non-empty finite subsets of $Y$. By a \emph{cover} of $X$ we mean a family of subsets of $X$ with the whole space as its union. If elements of a cover are pairwise disjoint, then it is called a \emph{partition}.
Denote by $\mathcal{C}_X, \mathcal{C}_X^o, \mathcal{C}_X^c$ and $\mathcal{P}_X$
the set of all finite Borel covers, finite open covers, finite closed covers and finite Borel partitions of $X$, respectively. For $\mathcal{V}\in \mathcal{C}_X$ and $\emptyset\neq K\subset X$ we set $\overline{\mathcal{V}}= \{\overline{V}: V\in \mathcal{V}\}$, and set $N (\mathcal{V}, K)$ to be the minimal cardinality of sub-families of $\mathcal{V}$ covering $K$ (with $N (\mathcal{V}, \emptyset)= 0$ by convention).

\medskip

\emph{Throughout the whole paper, we shall fix the convention $\log 0 = - \infty$.}

\medskip

Now we recall the sofic measure-theoretic entropy introduced in \cite[\S 2]{Bowen10}.

Let $\alpha= \{A_1, \cdots, A_k\}\in \mathcal{P}_X, k\in \mathbb{N}$ and $\sigma: G\rightarrow \sym(d), d \in \mathbb{N}$, and let $\zeta$ be the uniform probability measure on $\{1, \cdots, d\}$ and $\beta= \{B_1, \cdots, B_k\}$ a partition of $\{1, \cdots, d\}$. Assume $\mu\in M (X, G)$.
 For $F\in\mathfrak{F}_G$, we denote by $\map(F, k)$ the set of all functions $\phi: F\rightarrow \Nb$ such that $\phi(f)\leq k$ for all $f\in F$, and we set
  $$d_F(\alpha,\beta)=\sum_{\phi\in \map(F,k)}|\mu(A_\phi)-\zeta(B_\phi)|,$$
where
$$A_\phi=\bigcap_{f\in F}f^{-1}A_{\phi(f)}\ \text{and}\ B_\phi=\bigcap_{f\in F} \sigma(f)^{-1}B_{\phi(f)}\ \text{for each $\phi\in\map(F,k)$}.$$
Now for each $\varepsilon>0$, let $AP_\mu (\sigma,\alpha:F,\varepsilon)$ (or just $AP(\sigma,\alpha:F,\varepsilon)$ if there is no any ambiguity) be the set of all partitions $\beta=\{B_1, \cdots, B_k\}$ of $\{1, \cdots, d\}$ with $d_F(\alpha,\beta)\leq \varepsilon$. In particular, $|AP(\sigma,\alpha:F,\varepsilon)|\le k^d$.
We define
$$H_{\mu,\Sigma}(\alpha:F,\varepsilon)=\limsup_{i\rightarrow \infty}\frac{1}{d_i}\log|AP(\sigma_i,\alpha:F,\varepsilon)|\le \log |\alpha|,$$
$$H_{\mu,\Sigma}(\alpha:F)=\lim_{\varepsilon\rightarrow 0}H_{\mu,\Sigma}(\alpha:F,\varepsilon)= \inf_{\varepsilon> 0} H_{\mu,\Sigma}(\alpha:F,\varepsilon) \le \log |\alpha|,$$
$$h_{\mu,\Sigma}(\alpha)=\inf_{F\in\fF_G}H_{\mu,\Sigma}(\alpha:F)\le \log |\alpha|.$$
Observe that $AP(\sigma_i,\alpha:F,\varepsilon)$ may be empty, and in the case that $AP(\sigma_i,\alpha:F,\varepsilon)= \emptyset$ for all large enough $i\in \mathbb{N}$ we have $H_{\mu,\Sigma}(\alpha:F, \varepsilon)= -\infty$ by the convention $\log 0 = - \infty$. Hence, $h_{\mu,\Sigma}(\alpha)$ may take a value in $[0, \log |\alpha|]\cup \{- \infty\}$.

The main result of \cite{Bowen10} tells us that, if there exists an $\alpha\in \mathcal{P}_X$ generating the $\sigma$-algebra $\mathcal{B}_X$ (in the sense of $\mu$) then the quantity $h_{\mu,\Sigma} (\alpha)$ is independent of the selection of such a partition, and this quantity, denoted by $h_{\mu,\Sigma}(X,G)$, is called the \emph{measure-theoretic $\mu$-entropy of $(X, G)$}.
Indeed, L. Bowen defined the measure-theoretic entropy in a more general case when the action
admits a generating partition $\beta$ (not necessary finite) with finite Shannon entropy \cite{Bowen10}. We say that the partition $\beta\subset \mathcal{B}_X$ \emph{generates the $\sigma$-algebra $\mathcal{B}_X$ (in the sense of $\mu$)} if for each $B\in \mathcal{B}_X$ there exists $A\in \mathcal{A}$ such that $\mu (A\Delta B)= 0$, where $\mathcal{A}$ is the smallest $G$-invariant sub-$\sigma$-algebra of $\mathcal{B}_X$ containing $\beta$.

 Observe that, for an expansive action $(X, G)$ of a sofic group with an expansive constant $\delta> 0$, if $\xi\in \mathcal{P}_X$ satisfies $\text{diam} (\xi)< \delta$, where $\text{diam} (\xi)$ denotes the maximal diameter of subsets in $\xi$, then, for each $\mu\in M (X, G)$ (if $M (X, G)\neq \emptyset$), $\xi$ generates $\mathcal{B}_X$ \cite[Theorem 5.25]{Walters82}, and so the quantity $h_{\mu,\Sigma}(X,G)$ is well defined.

 \medskip

{\it For technical reasons for $r_1, r_2\in [- \infty, \infty]$ we set $r_1+ r_2= - \infty$ by convention in the case that either $r_1= - \infty$ or $r_2= - \infty$, and for $r_1, r_2\in (- \infty, \infty]$ we set $r_1+ r_2= \infty$ by convention in the case that either $r_1= \infty$ or $r_2= \infty$.
}

\medskip

We say that a function $f: Y\rightarrow [- \infty, \infty)$ defined over a compact metric space $Y$ is \emph{upper semi-continuous} if $\limsup_{y'\rightarrow y} f (y')\le f (y)$ for each $y\in Y$.
 The following result shows that
each expansive action of a sofic group admits a measure with maximal entropy.

\begin{theorem}\label{expansivesofic}
Let $(X, G)$ be an expansive action of a sofic group with $M (X, G)\neq \emptyset$. Then $h_{\bullet,\Sigma}(X,G): M(X, G)\rightarrow [0,\infty)\cup \{-\infty\}$ is an upper semi-continuous function.
\end{theorem}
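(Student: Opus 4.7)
The plan is to pick, for each $\mu \in M(X,G)$, a single partition $\xi$ that simultaneously generates $\mathcal{B}_X$ for all nearby $G$-invariant measures and whose boundary is $\mu$-null. Let $\delta > 0$ be an expansive constant and fix $\mu \in M(X,G)$. First I would cover $X$ by finitely many open balls $B(x_j, r_j)$ of radius less than $\delta/2$, choosing each $r_j$ so that $\mu(\partial B(x_j, r_j)) = 0$; this is possible because for every centre $x_j$ the spheres $\{y : \rho(y, x_j) = r\}$ for $r > 0$ are pairwise disjoint, so only countably many carry positive $\mu$-mass. The disjointification $A_j = B(x_j, r_j) \setminus \bigcup_{k < j} B(x_k, r_k)$ gives a Borel partition $\xi \in \mathcal{P}_X$ with $\diam(\xi) < \delta$ and $\mu(\partial A) = 0$ for every $A \in \xi$. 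By \cite[Theorem~5.25]{Walters82}, such a $\xi$ generates $\mathcal{B}_X$ in the sense of every $\nu \in M(X,G)$, hence $h_{\nu, \Sigma}(X,G) = h_{\nu, \Sigma}(\xi)$ for all such $\nu$; the goal is reduced to showing $\limsup_{\nu_n \to \mu} h_{\nu_n, \Sigma}(\xi) \le h_{\mu, \Sigma}(\xi)$.

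The key estimate is a uniform weak-$\ast$ continuity of the quantities $d_F(\xi, \beta)$ in the measure. Fix $F \in \fF_G$ and set $k = |\xi|$. For each $\phi \in \map(F, k)$ the set $A_\phi = \bigcap_{f \in F} f^{-1} A_{\phi(f)}$ satisfies $\partial A_\phi \subseteq \bigcup_{f \in F} f^{-1}(\partial A_{\phi(f)})$ since $G$ acts by homeomorphisms, hence $\mu(\partial A_\phi) = 0$. By the Portmanteau theorem, $\nu_n(A_\phi) \to \mu(A_\phi)$ whenever $\nu_n \to \mu$ in $M(X,G)$; because $\map(F, k)$ is finite, for every $\eta > 0$ there exists $N = N(F, \eta)$ with $\sum_\phi |\nu_n(A_\phi) - \mu(A_\phi)| < \eta$ for all $n \ge N$. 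This forces $|d_F^{\nu_n}(\xi, \beta) - d_F^{\mu}(\xi, \beta)| < \eta$ uniformly in $\beta$, so for every $i \in \Nb$ and every $\varepsilon > 0$,
$$AP_{\nu_n}(\sigma_i, \xi : F, \varepsilon) \subseteq AP_\mu(\sigma_i, \xi : F, \varepsilon + \eta),$$
whence $H_{\nu_n, \Sigma}(\xi : F, \varepsilon) \le H_{\mu, \Sigma}(\xi : F, \varepsilon + \eta)$. The inclusion also handles the $\log 0 = -\infty$ convention correctly, since an empty right-hand side forces the left-hand side to be empty.

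To finish, I would pass to limits in the correct order. Fixing $F$ and any $\varepsilon, \eta > 0$, the displayed inequality yields
$$\limsup_{n \to \infty} H_{\nu_n, \Sigma}(\xi : F) \le \limsup_{n \to \infty} H_{\nu_n, \Sigma}(\xi : F, \varepsilon) \le H_{\mu, \Sigma}(\xi : F, \varepsilon + \eta);$$
infimizing over $\varepsilon$ and $\eta$ gives $\limsup_n H_{\nu_n, \Sigma}(\xi : F) \le H_{\mu, \Sigma}(\xi : F)$. Since $h_{\nu_n, \Sigma}(\xi) \le H_{\nu_n, \Sigma}(\xi : F)$ for each $F$, taking $\limsup_n$ and then infimizing over $F$ yields the desired bound $\limsup_n h_{\nu_n, \Sigma}(\xi) \le h_{\mu, \Sigma}(\xi)$. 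The main obstacle I anticipate is precisely the coordination between partition and measures: $\xi$ must be fine-tuned to $\mu$ to guarantee $\mu(\partial \xi) = 0$, yet must also serve to compute the sofic entropy of every nearby $\nu_n$. Expansiveness is exactly the ingredient that lets a single $\xi$ play both roles via \cite[Theorem~5.25]{Walters82}; without it, the reduction to one generating partition, and hence the whole argument, collapses.
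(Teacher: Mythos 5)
Your proof is correct, and while it shares the paper's overall strategy, it realizes the central estimate by a different mechanism. Both arguments use expansiveness to produce a partition of diameter less than the expansive constant that generates $\mathcal{B}_X$ for \emph{every} invariant measure simultaneously, and both then reduce the problem to an inclusion between the sets $AP_\nu$ and $AP_\mu$ for $\nu$ near $\mu$. The paper gets that inclusion with \emph{two} partitions: it keeps $\xi$ for $\mu$, uses inner regularity to find compacta $K_\phi\subset A_\phi$, builds a second small-diameter partition $\xi'$ whose cells contain the relevant compacta in their interiors, and uses Urysohn functions to carve out a weak-$*$ open neighborhood $U$ of $\mu$ on which $\sum_\phi |\nu(A'_\phi)-\mu(A_\phi)|\le\varepsilon$, yielding $AP_\nu(\sigma_i,\xi':F,\varepsilon)\subset AP_\mu(\sigma_i,\xi:F,2\varepsilon)$ and then exploiting that $\xi'$ also generates. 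You instead fine-tune a \emph{single} partition to have $\mu$-null boundaries, observe that $G$-invariance of $\mu$ pushes the null-boundary property to every $A_\phi$, and invoke Portmanteau to get $\nu_n(A_\phi)\to\mu(A_\phi)$, hence $AP_{\nu_n}(\sigma_i,\xi:F,\varepsilon)\subseteq AP_\mu(\sigma_i,\xi:F,\varepsilon+\eta)$ for large $n$. Your route is arguably cleaner (no second partition, no Urysohn functions), at the cost of one small point you should make explicit: the radii chosen to kill the boundary masses must still yield a cover of $X$ by balls of diameter less than the expansive constant --- e.g.\ start from a finite $\delta/4$-net and pick each $r_j\in(\delta/4,\delta/2)$ with $\mu(\partial B(x_j,r_j))=0$. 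Your ordering of limits at the end (fix $F$, bound for all $\varepsilon,\eta$, infimize over those, and only then infimize over $F$) is correct, and the sequential formulation of upper semi-continuity suffices since $M(X,G)$ is metrizable.
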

\begin{proof}
The proof is inspired by \cite[Theorem 8.2]{Walters82}.

Let $\delta> 0$ be an expansive constant for $(X, G)$ and
 $\xi\in \mathcal{P}_X$ with $\text{diam} (\xi)< \delta$. Then $\xi$ generates $\mathcal{B}_X$ and so $h_{\mu,\Sigma}(X,G)\in [0, \log |\xi|]\cup \{- \infty\}$ for each $\mu\in M (X, G)$.

Now fix $\eta>0$ and $\mu\in M (X, G)$. It suffices to find an open set $U\subset M (X, G)$ containing $\mu$ such that $h_{\nu,\Sigma}(X,G)\le h_{\mu,\Sigma}(X,G)+\eta$ for each $\nu\in U$.

 We choose $F\in \fF_G$ and $\varepsilon>0$ such that $H_{\mu,\Sigma}(\xi:F,2\varepsilon)\le h_{\mu, \Sigma} (X, G)+ \eta$.
 Say $\xi=\{A_1, \cdots, A_k\}$ and let $0<\varepsilon_1< \frac{\varepsilon}{2M^2}$ with $M=|\map(F,k)|= k^{|F|}$. Let $\phi\in\map(F,k)$. Since $\mu$ is regular, there exists a compact set $K_\phi\subset A_\phi$ with $\mu(A_\phi\setminus K_\phi)<\varepsilon_1$, and then for each $i= 1, \cdots, k$ we define
 $$L_i=\bigcup_{f\in F}\{fK_\phi: \phi(f)=i\}\subset A_i.$$
Then $L_1, \cdots, L_k$ are pairwise disjoint compact subsets of $X$, and so there exists $\xi'= \{A'_1, \cdots, A'_k\}\in \mathcal{P}_X$ such that $\text{diam} (\xi')< \delta$ and, for each $j= 1, \cdots, k$, $L_j\subset \mbox{int} (A'_j)$ where $\mbox{int} (A'_j)$ denotes the interior of $A_j'$. Observe that
$$K_\phi\subset \mbox{int}\left(\bigcap_{f\in F}f^{-1}A'_{\phi(f)}\right)=\mbox{int}(A'_\phi)\ \left(\text{here}\ A'_\phi= \bigcap_{f\in F}f^{-1}A'_{\phi(f)}\right)$$
by the construction of $\xi'$, and so using Urysohn's Lemma we can choose $u_\phi\in C(X)$, where $C (X)$ denotes the set of all real-valued continuous functions over $X$, with $0\leq u_\phi\leq 1$ which equals 1 on $K_\phi$ and vanishes on $X\setminus\mbox{int}(A'_\phi)$.
Set
$$U=\{\nu\in M(X, G): |\nu(u_\phi)-\mu(u_\phi)|<\varepsilon_1 \mbox{ for all } \phi\in \map(F,k)\}$$
which is an open set of $M (X, G)$ containing $\mu$. Let $\nu\in U$. Then
$\nu(A'_\phi)\geq \nu(u_\phi)>\mu(u_\phi)-\varepsilon_1\geq \mu(K_\phi)-\varepsilon_1$
and hence $\mu(A_\phi)-\nu(A'_\phi)<2\varepsilon_1$ for each $\phi\in\map(F,k)$. Observe $\{A_\phi: \phi\in \map(F,k)\}\in \mathcal{P}_X$ and $\{A'_\phi: \phi\in \map(F,k)\}\in \mathcal{P}_X$. Note that if $p_1, \cdots, p_m, q_1, \cdots, q_m, c$ are nonnegative real numbers with $m\in \mathbb{N}$ such that $\sum_{i= 1}^m p_i= \sum_{i= 1}^m q_i=1$ and $p_j-q_j<c$ for each $j= 1, \cdots, m$ then
$$q_i-p_i= \sum_{j\neq i} (p_j-q_j)<mc$$
 and hence $|p_i-q_i|<mc$ for any $i= 1, \cdots, m$. This implies
$\big|\nu(A'_\phi)-\mu(A_\phi)\big|<2\varepsilon_1 M$
for each $\phi\in\map(F,k)$, and so
$$\sum_{\phi\in\map(F,k)}|\nu(A'_\phi)-\mu(A_\phi)|\leq 2\varepsilon_1 M^2\leq\varepsilon.$$
 Thus $AP_{\nu}(\sigma_i,\xi':F,\varepsilon)\subset AP_{\mu}(\sigma_i,\xi:F,2\varepsilon)$ for each $i\in \Nb$, and then $$H_{\nu,\Sigma}(\xi':F)\leq H_{\nu,\Sigma}(\xi':F,\varepsilon)\leq H_{\mu,\Sigma}(\xi:F,2\varepsilon)\le h_{\mu,\Sigma}(X,G)+\eta.$$
As $\text{diam} (\xi')< \delta$, $\xi'\in \mathcal{P}_X$ generates $\mathcal{B}_X$ by the construction of $\delta$, and so we get $h_{\nu,\Sigma}(X,G)\leq h_{\mu,\Sigma}(X,G)+\eta$ for each $\nu\in U$ as desired. This finishes the proof.
\end{proof}

In the spirit of L. Bowen's entropy as above, Kerr and Li introduced alternatively the sofic measure-theoretic entropy \cite{KL11Inven, KLAJM} as follows.

Let $(Y, \rho)$ be a metric space and $\varepsilon> 0$. A set $\emptyset\neq A\subset Y$ is said to be \emph{$(\rho, \varepsilon)$-separated} if $\rho (x, y)\ge \varepsilon$ for all distinct $x, y\in A$. We write $N_\varepsilon (Y, \rho)$ for the maximal cardinality of finite non-empty $(\rho, \varepsilon)$-separated subsets of $Y$ (and set $N_\varepsilon (\emptyset, \rho)= 0$ by convention). A basic fact is that if $\emptyset\neq A\subset Y$ is a maximal finite $(\rho, \varepsilon)$-separated subset of $Y$ then for each $y\in Y$ there exists $x\in A$ such that $\rho (x, y)< \varepsilon$.

For each $d\in \mathbb{N}$ and $(x_1, \cdots, x_d), (x_1', \cdots, x_d')\in X^d$, we set
\begin{equation} \label{metric}
\rho_d ((x_1, \cdots, x_d), (x_1', \cdots, x_d'))= \max_{i= 1}^d \rho (x_i, x_i').
\end{equation}

For $F\in \mathfrak{F}_G, \delta> 0$ and $\sigma: G\rightarrow Sym (d), g\mapsto \sigma_g$ with $d\in \mathbb{N}$, put
$$X^d_{F, \delta, \sigma}= \left\{(x_1, \cdots, x_d)\in X^d: \max_{s\in F} \sqrt{\sum_{i= 1}^d \frac{1}{d} \rho^2 (s x_i, x_{\sigma_s (i)})}< \delta\right\};$$
and then for $\mu\in M (X)$ and $L\in \mathfrak{F}_{C (X)}$, we set
$$X^d_{F, \delta, \sigma, \mu, L}= \left\{(x_1, \cdots, x_d)\in X^d_{F, \delta, \sigma}: \max_{f\in L} \left|\frac{1}{d} \sum_{i= 1}^d f (x_i)- \mu (f)\right|< \delta\right\}.$$
By \cite[Proposition 3.4]{KLAJM} the \emph{measure-theoretic $\mu$-entropy of $(X, G)$} can be defined as (recalling the convention $\log 0 = - \infty$)
$$h_\mu (G, X)= \sup_{\varepsilon> 0} \inf_{L\in \mathfrak{F}_{C (X)}} \inf_{F\in \mathfrak{F}_G} \inf_{\delta> 0} \limsup_{i\rightarrow \infty} \frac{1}{d_i} \log N_\varepsilon \left(X^{d_i}_{F, \delta, \sigma_i, \mu, L}, \rho_{d_i}\right).$$

The sofic measure-theoretic entropy can be defined equivalently using finite open covers as follows \cite[\S 2]{Zhang12}. We remark that it was Li who first used open covers for actions of sofic groups to consider sofic mean dimension in \cite{LAM}.

For $\mathcal{U}\in \mathcal{C}_X$ and $d\in\Nb$, we denote by $\mathcal{U}^d$ the finite Borel cover of $X^d$ consisting of $U_1\times U_2\times\cdots \times U_d$, where $U_1,\cdots ,U_d\in \cU$. Let $\mathcal{U}\in \mathcal{C}_X$, we set (recalling the convention $\log 0 = - \infty$)
$$h_{F, \delta, \mu, L} (G, \mathcal{U})= \limsup_{i\rightarrow \infty} \frac{1}{d_i} \log N \left(\mathcal{U}^{d_i}, X^{d_i}_{F, \delta, \sigma_i, \mu, L}\right).$$
In particular, $h_{F, \delta, \mu, L} (G, \mathcal{U})$ takes the value $- \infty$ if $X^{d_i}_{F, \delta, \sigma_i, \mu, L}= \emptyset$ for all $i\in \mathbb{N}$ large enough.
Now we define the \emph{measure-theoretic $\mu$-entropy of $\mathcal{U}$} as
$$h_\mu (G, \mathcal{U})= \inf_{L\in \mathfrak{F}_{C (X)}} \inf_{F\in \mathfrak{F}_G} \inf_{\delta> 0} h_{F, \delta, \mu, L} (G, \mathcal{U})\le \log N (\mathcal{U}, X).$$

It is not hard to check that
$$h_\mu (G, X)= \sup_{\mathcal{U}\in \mathcal{C}_X^o} h_\mu (G, \mathcal{U}).$$
Moreover, by the proof of \cite[Theorem 6.1]{KL11Inven}, it was proved implicitly $h_\mu (G, X)= - \infty$ (and hence $h_\mu (G, \mathcal{U})= - \infty$ for all $\mathcal{U}\in \mathcal{C}_X^o$) for each $\mu\in M (X)\setminus M (X, G)$.

Observe that both of $h_\mu (G, \mathcal{U})$ and $h_\mu (G, X)$ may take the value of $- \infty$, and by \cite{KL11Inven, KLAJM} if $\mu\in M (X, G)$ and $\mathcal{B}_X$ admits a generating partition (in the sense of $\mu$) with finite Shannon entropy then $h_\mu (G, X)$ is just the quantity $h_{\mu,\Sigma}(X,G)$ introduced before.

\medskip

The following result is easy to obtain:

\begin{proposition} \label{1207102137}
Let $\mathcal{U}\in \mathcal{C}_X^o$. Then $h_{\bullet} (G, \mathcal{U}): M (X)\rightarrow [0, \log N (\mathcal{U}, X)]\cup \{- \infty\}$
is an upper semi-continuous function.
\end{proposition}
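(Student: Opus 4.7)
The plan is to exploit the infimum over triples $(L, F, \delta)$ in the definition of $h_\mu(G, \mathcal{U})$ together with weak-$*$ continuity of $\nu \mapsto \nu(f)$ for $f \in C(X)$: a small weak-$*$ perturbation of $\mu$ only slightly affects the constraint defining $X^d_{F, \delta, \sigma, \mu, L}$, and this can be absorbed by halving the tolerance $\delta$ in the perturbed object.

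Fix $\mu \in M(X)$ and $\eta > 0$. Using the definition of $h_\mu(G, \mathcal{U})$ as an infimum, choose $L_0 \in \fF_{C(X)}$, $F_0 \in \fF_G$ and $\delta_0 > 0$ so that
\[
h_{F_0, \delta_0, \mu, L_0}(G, \mathcal{U}) < h_\mu(G, \mathcal{U}) + \eta;
\]
when $h_\mu(G, \mathcal{U}) = -\infty$, interpret this as $h_{F_0, \delta_0, \mu, L_0}(G, \mathcal{U}) = -\infty$. Such a triple exists in both cases, since each $h_{F, \delta, \mu, L}(G, \mathcal{U})$ lies in $[0, \log N(\mathcal{U}, X)] \cup \{-\infty\}$, so the infimum equals $-\infty$ only if some term does. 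Define the weak-$*$ open neighborhood
\[
V = \left\{\nu \in M(X) : \max_{f \in L_0} |\nu(f) - \mu(f)| < \delta_0/2\right\}
\]
of $\mu$.

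For $\nu \in V$, the key observation is the inclusion $X^d_{F_0, \delta_0/2, \sigma, \nu, L_0} \subset X^d_{F_0, \delta_0, \sigma, \mu, L_0}$, valid for every $d \in \Nb$ and every $\sigma: G \to \sym(d)$. Indeed, if $(x_1, \ldots, x_d)$ belongs to the left-hand side, then $(x_1, \ldots, x_d) \in X^d_{F_0, \delta_0/2, \sigma} \subset X^d_{F_0, \delta_0, \sigma}$, and for each $f \in L_0$ the triangle inequality yields
\[
\left|\frac{1}{d}\sum_{i=1}^d f(x_i) - \mu(f)\right| \leq \left|\frac{1}{d}\sum_{i=1}^d f(x_i) - \nu(f)\right| + |\nu(f) - \mu(f)| < \delta_0.
\]
Monotonicity of $N(\mathcal{U}^d, \cdot)$ in its second argument then gives $N(\mathcal{U}^{d_i}, X^{d_i}_{F_0, \delta_0/2, \sigma_i, \nu, L_0}) \leq N(\mathcal{U}^{d_i}, X^{d_i}_{F_0, \delta_0, \sigma_i, \mu, L_0})$ for every $i$, so taking $\limsup$ produces
\[
h_\nu(G, \mathcal{U}) \leq h_{F_0, \delta_0/2, \nu, L_0}(G, \mathcal{U}) \leq h_{F_0, \delta_0, \mu, L_0}(G, \mathcal{U}) < h_\mu(G, \mathcal{U}) + \eta
\]
for all $\nu \in V$. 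In the degenerate case $h_\mu(G, \mathcal{U}) = -\infty$, the same inclusion forces $X^{d_i}_{F_0, \delta_0/2, \sigma_i, \nu, L_0}$ to be empty for all sufficiently large $i$, hence $h_\nu(G, \mathcal{U}) = -\infty$ on $V$ by the convention $\log 0 = -\infty$. This establishes upper semi-continuity. The argument is essentially bookkeeping on the parameters $(L, F, \delta)$; the only mild point requiring attention is confirming that the value $-\infty$ propagates correctly under the paper's conventions, but this follows at once from the same inclusion of sets.
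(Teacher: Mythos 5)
Your proof is correct and follows essentially the same route as the paper's: choose a near-optimal triple $(L,F,\delta)$ for $\mu$, perturb $\nu$ in the weak-$*$ sense by less than half the tolerance, and use the resulting inclusion $X^d_{F,\delta/2,\sigma,\nu,L}\subset X^d_{F,\delta,\sigma,\mu,L}$ to bound $h_\nu(G,\mathcal{U})$. Your explicit bookkeeping of the $-\infty$ case is a harmless elaboration of what the paper leaves implicit.
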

\begin{proof}
Let $\mu\in M (X)$. For any $\varepsilon> 0$ we may choose $L\in \mathfrak{F}_{C (X)}, F\in \mathfrak{F}_G$ and $\delta> 0$ such that $h_{F, 2 \delta, \mu, L} (G, \mathcal{U})\le h_\mu (G, \mathcal{U})+ \varepsilon$. Now we consider the non-empty open set
$$\mu\in V= \{\nu\in M (X): |\nu (f)- \mu (f)|< \delta\ \text{for all}\ f \in L\}.$$
Then for each $\nu\in V$ we have $X^d_{F, \delta, \sigma, \nu, L}\subset X^d_{F, 2 \delta, \sigma, \mu, L}$ for each $\sigma: G\rightarrow Sym (d), d\in \mathbb{N}$, which implies $h_\nu (G, \mathcal{U})\le h_{F, \delta, \nu, L} (G, \mathcal{U})\le h_{F, 2 \delta, \mu, L} (G, \mathcal{U})\le h_\mu (G, \mathcal{U})+ \varepsilon$.
This implies that the considered function is upper semi-continuous.
\end{proof}

\section{Weak expansiveness for actions of sofic groups}

Note that, when considering a continuous mapping over a compact metric space, since the introduction of $h$-expansiveness and asymptotical $h$-expansiveness, both of them turn out to be very important classes in the research area of dynamical systems. It is shown by R. Bowen \cite{Bex} that positively expansive systems,
expansive homeomorphisms, endomorphisms of a compact Lie group and
Axiom $A$ diffeomorphisms are all $h$-expansive, by Misiurewicz
\cite{Mi1} that every continuous endomorphism of a compact metric
group is asymptotically $h$-expansive if it has finite entropy, and
by Buzzi \cite{Bu} that any $C^\infty$ diffeomorphism on a compact
manifold is asymptotically $h$-expansive. Moreover, there are more nice characterizations of asymptotical $h$-expansiveness obtained recently for this setting. For example, a topological dynamical system is asymptotically $h$-expansive if and only if it admits a principal extension to a symbolic system by Boyle and Downarowicz \cite{BD}, i.e., a symbolic extension which preserves entropy for each
invariant measure; if and only if it is hereditarily uniformly
lowerable by Huang, Ye and the second author of the present paper \cite{HYZ} (for a detailed definition of the hereditarily uniformly lowerable property and its story see \cite{HYZ}).

In this section we explore similar weak expansiveness for actions of sofic groups.

\medskip

By \cite[Proposition 2.4]{KLAJM} the \emph{topological entropy of $(X, G)$} can be defined as
$$h (G, X)= \sup_{\varepsilon> 0} \inf_{F\in \mathfrak{F}_G} \inf_{\delta> 0} \limsup_{i\rightarrow \infty} \frac{1}{d_i} \log N_\varepsilon \left(X^{d_i}_{F, \delta, \sigma_i}, \rho_{d_i}\right),$$
which is introduced and discussed in \cite{KL11Inven, KLAJM}.
Before proceeding, we need to recall the topological entropy for actions of sofic groups introduced in \cite[\S 2]{Zhang12} using finite open covers.
Let $\mathcal{U}\in \mathcal{C}_X$.
For $F\in \mathfrak{F}_G$ and $\delta> 0$ we set
$$h_{F, \delta} (G, \mathcal{U})= \limsup_{i\rightarrow \infty} \frac{1}{d_i} \log N \left(\mathcal{U}^{d_i}, X^{d_i}_{F, \delta, \sigma_i}\right).$$
Observe again that $h_{F, \delta} (G, \mathcal{U})$ takes the value of $- \infty$ whenever $X^{d_i}_{F, \delta, \sigma_i}= \emptyset$ for all $i\in \mathbb{N}$ large enough.
Now we define the
 \emph{topological entropy of $\mathcal{U}$} as
$$h (G, \mathcal{U})= \inf_{F\in \mathfrak{F}_G} \inf_{\delta> 0} h_{F, \delta} (G, \mathcal{U})\le \log N (\mathcal{U}, X).$$
It is not hard to check that
$$h (G, X)= \sup_{\mathcal{U}\in \mathcal{C}_X^o} h (G, \mathcal{U}).$$
Observe that both of $h (G, \mathcal{U})$ and $h (G, X)$ may take the value of $- \infty$.

 The sofic topological entropy and sofic measure-theoretic entropy are related to each other \cite[Theorem 6.1]{KL11Inven} and \cite[Theorem 4.1]{Zhang12}: for $\mathcal{U}\in \mathcal{C}_X^o$,
 \begin{equation} \label{1310021806}
 h (G, X)= \sup_{\mu\in M (X, G)} h_\mu (G, X)\ \text{and}\ h (G, \mathcal{U})= \max_{\mu\in M (X, G)} h_\mu (G, \mathcal{U}),
 \end{equation}
 where in the right-hand sides as above we set it to $- \infty$ by convention if $M (X, G)= \emptyset$.

In the spirit of Misiurewicz \cite{Mi1}, the above idea can be used to introduce
 $h$-expansiveness and asymptotical $h$-expansiveness for actions of sofic groups.
 Let $\mathcal{U}_1, \mathcal{U}_2\in \mathcal{C}_X$. For $F\in \mathfrak{F}_G$ and $\delta> 0$ we set
$$h_{F, \delta} (G, \mathcal{U}_1| \mathcal{U}_2)= \limsup_{i\rightarrow \infty} \frac{1}{d_i} \log \max_{V\in \mathcal{U}_2^{d_i}} N \left(\mathcal{U}_1^{d_i}, X^{d_i}_{F, \delta, \sigma_i}\cap V\right)\le h_{F, \delta} (G, \mathcal{U}_1),$$
$$h (G, \mathcal{U}_1| \mathcal{U}_2)= \inf_{F\in \mathfrak{F}_G} \inf_{\delta> 0} h_{F, \delta} (G, \mathcal{U}_1| \mathcal{U}_2)\le h (G, \mathcal{U}_1),$$
$$h (G, X| \mathcal{U}_2)= \sup_{\mathcal{U}_1\in \mathcal{C}_X^o} h (G, \mathcal{U}_1| \mathcal{U}_2)\le h (G, X),$$
$$h^* (G, X)= \inf_{\mathcal{U}_2\in \mathcal{C}_X^o} h (G, X| \mathcal{U}_2)\le h (G, X).$$
Then $h (G, X| \{X\})= h (G, X)$ by the definitions.
We say that $(X, G)$ is \emph{$h$-expansive} if $h (G, X| \mathcal{U})\le 0$ for some $\mathcal{U}\in \mathcal{C}_X^o$, and \emph{asymptotically $h$-expansive} if $h^* (G, X)\le 0$.

\medskip

Each $h$-expansive action of a sofic group is asymptotically $h$-expansive by definition.
The next result shows that each expansive action of a sofic group is $h$-expansive, and thus these two kinds of expansiveness are indeed weak expansiveness.

\begin{theorem} \label{1207121257}
Let $(X, G)$ be an expansive action of a sofic group with $\kappa> 0$ an expansive constant and $\mathcal{U}\in \mathcal{C}_X$. Assume that $\text{diam} (\mathcal{U})\le c \kappa$ for some $c< 1$. Then $h (G, X| \mathcal{U})\le 0$.
\end{theorem}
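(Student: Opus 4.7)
The plan is to show $h(G,\mathcal{U}_1|\mathcal{U})\le 0$ for every $\mathcal{U}_1\in\mathcal{C}_X^o$; the theorem then follows by taking the supremum over $\mathcal{U}_1$. The approach adapts R. Bowen's classical argument to the sofic setting: expansiveness supplies a finite window that forces local rigidity, while a Chebyshev estimate absorbs the sofic defect. The key preliminary is an expansive lemma: fixing $c'\in(c,1)$, for every $\epsilon>0$ there exists $F\in\mathfrak{F}_G$ such that $\rho(gx,gy)\le c'\kappa$ for all $g\in F$ forces $\rho(x,y)<\epsilon$. This follows by a standard compactness-contradiction argument, since any limit pair $x\ne y$ with $\rho(gx,gy)\le c'\kappa<\kappa$ for every $g\in G$ would violate expansiveness at the constant $\kappa$.

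Given $\mathcal{U}_1$, let $\epsilon_0>0$ be a Lebesgue number for $\mathcal{U}_1$, apply the expansive lemma with $\epsilon_0/2$ to obtain $F\in\mathfrak{F}_G$, and set $\eta=(c'-c)\kappa/3$. For $y=(y_1,\dots,y_{d_i})\in X^{d_i}_{F,\delta,\sigma_i}$, Chebyshev applied to the $\ell^2$ sofic defect bounds the exceptional set $E(y):=\bigcup_{s\in F}\{j:\rho(sy_j,y_{\sigma_{i,s}(j)})\ge\eta\}$ by $|E(y)|\le\alpha d_i$, where $\alpha=|F|\delta^2/\eta^2$. Now fix $V=U_{j_1}\times\cdots\times U_{j_{d_i}}\in\mathcal{U}^{d_i}$: for any $y,y'\in X^{d_i}_{F,\delta,\sigma_i}\cap V$ whose exceptional sets both lie inside some $E\subset\{1,\dots,d_i\}$ and any $j\notin E$, the triangle inequality combined with $\mathrm{diam}(\mathcal{U})\le c\kappa$ gives $\rho(sy_j,sy_j')\le\eta+c\kappa+\eta<c'\kappa$ for every $s\in F$, hence $\rho(y_j,y_j')<\epsilon_0/2$ by the expansive lemma. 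Therefore the set $\{y_j:y\in X^{d_i}_{F,\delta,\sigma_i}\cap V,\ E(y)\subset E\}$ has diameter strictly below $\epsilon_0$ and is contained in a single element $W_j^{V,E}\in\mathcal{U}_1$.

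To each pair $(E,\phi)$ with $E\subset\{1,\dots,d_i\}$, $|E|\le\alpha d_i$, and $\phi\colon E\to\mathcal{U}_1$, attach the box $B_{E,\phi}\in\mathcal{U}_1^{d_i}$ whose $j$-th factor equals $W_j^{V,E}$ for $j\notin E$ and $\phi(j)$ for $j\in E$. For each $y\in X^{d_i}_{F,\delta,\sigma_i}\cap V$ choose $\phi_y$ with $y_j\in\phi_y(j)$ on $E(y)$; then $y\in B_{E(y),\phi_y}$, so these boxes cover $X^{d_i}_{F,\delta,\sigma_i}\cap V$. Their number is at most
$$\sum_{k=0}^{\lfloor\alpha d_i\rfloor}\binom{d_i}{k}|\mathcal{U}_1|^k\le(\alpha d_i+1)\exp\bigl(d_i(H(\alpha)+\alpha\log|\mathcal{U}_1|)\bigr),$$
where $H(\alpha)=-\alpha\log\alpha-(1-\alpha)\log(1-\alpha)$. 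Taking $d_i^{-1}\log$, then $\limsup_{i}$, and finally $\inf_{\delta>0}$, the fact that $H(\alpha)+\alpha\log|\mathcal{U}_1|\to 0$ as $\delta\to 0$ gives $\inf_\delta h_{F,\delta}(G,\mathcal{U}_1|\mathcal{U})\le 0$; consequently $h(G,\mathcal{U}_1|\mathcal{U})\le 0$, and since $\mathcal{U}_1$ was arbitrary, $h(G,X|\mathcal{U})\le 0$.

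The main technical point is verifying that $W_j^{V,E}$ depends only on the data $(V,E,j)$ and not on the individual $y$: this is exactly what the hypothesis $\mathrm{diam}(\mathcal{U})\le c\kappa$ with $c<1$ secures, since the slack between $c\kappa$ and $\kappa$ leaves room for the $2\eta$ error in the triangle inequality while still landing below $c'\kappa<\kappa$, at which point the expansive lemma yields uniform rigidity in the good coordinates. Once this uniformity is established, the counting only pays $|\mathcal{U}_1|^{|E|}$ for the bad coordinates instead of $|\mathcal{U}_1|^{d_i}$ overall, and the resulting exponent $H(\alpha)+\alpha\log|\mathcal{U}_1|$ vanishes with $\delta$.
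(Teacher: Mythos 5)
Your proof is correct and follows essentially the same route as the paper: an expansiveness lemma producing a finite window $F$, a Chebyshev bound on the set of bad coordinates coming from the $\ell^2$ sofic defect, a triangle inequality using $\operatorname{diam}(\mathcal{U})\le c\kappa$ to force uniform closeness on good coordinates, and a $\binom{d}{k}|\mathcal{U}_1|^k$ count for the bad ones. The only (cosmetic) difference is that the paper stratifies each fiber $X^{d}_{F,\delta,\sigma}\cap V$ by the exceptional set relative to a fixed reference point and absorbs the combinatorial factor into a pre-chosen $\varepsilon$, whereas you stratify by the intrinsic exceptional set $E(y)$ and let the exponent $H(\alpha)+\alpha\log|\mathcal{U}_1|$ vanish as $\delta\to 0$.
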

\begin{proof}
Let $\mathcal{V}\in \mathcal{C}_X^o$ and $\varepsilon> 0$. It suffices to prove that $h (G, \mathcal{V}| \mathcal{U})\le \varepsilon$.

Let $\tau> 0$ be a Lebesgue number of $\mathcal{V}$. As $(X, G)$ is an expansive action of a sofic group with $\kappa> 0$ an expansive constant, it is not hard to choose $F\in \mathfrak{F}_G$ such that $\max_{s\in F} \rho (s x, s x')< \kappa$ implies $\rho (x, x')< \frac{\tau}{2}$ (for example see \cite[Chapter 5, \S 5.6]{Walters82}).
Now let $\delta> 0$ be small enough
such that
\begin{equation} \label{1207162007}
|\Theta_d|\cdot |\mathcal{V}|^{\frac{8 \delta^2 |F|}{(1- c)^2 \kappa^2}\cdot d}< e^{\varepsilon d}
\end{equation}
for all $d\in \mathbb{N}$ large enough, where $\Theta_d$ is the set of all subsets $\theta$ of $\{1, \cdots, d\}$ with
$$|\theta|< \frac{8 \delta^2 |F|}{(1- c)^2 \kappa^2}\cdot d.$$

For
any map $\sigma: G\rightarrow Sym (d), g\mapsto \sigma_g$ with $d\in \mathbb{N}$, recall
$$X^d_{F, \delta, \sigma}= \left\{(x_1, \cdots, x_d)\in X^d: \max_{s\in F} \sqrt{\sum_{i= 1}^d \frac{1}{d} \rho^2 (s x_i, x_{\sigma_s (i)})}< \delta\right\},$$
in particular, once $(x_1^*, \cdots, x_d^*)\in X^d_{F, \delta, \sigma}$ then, by direct calculations,
\begin{equation} \label{1207161920}
\left|\left\{i\in \{1, \cdots, d\}: \rho (s x_i^*, x_{\sigma_s (i)}^*)\ge \frac{(1- c) \kappa}{2}\ \text{for some}\ s\in F\right\}\right|< \frac{4 \delta^2 |F|\cdot d}{(1- c)^2 \kappa^2}.
\end{equation}

Now let $V\in \mathcal{U}^d$ and say $(x_1, \cdots, x_d)\in X^d_{F, \delta, \sigma}\cap V$ (if it is not empty).
For any $(x_1', \cdots, x_d')\in X^d_{F, \delta, \sigma}\cap V$, applying \eqref{1207161920} to $(x_1, \cdots, x_d)$ and $(x_1', \cdots, x_d')$ and observing
$\rho (s x_i, s x_i')\le \rho (s x_i, x_{\sigma_s (i)})+ \rho (x_{\sigma_s (i)}, x_{\sigma_s (i)}')+ \rho (s x_i', x_{\sigma_s (i)}')$,
it is easy to see
$$|\{i\in \{1, \cdots, d\}: \rho (s x_i, s x_i')\ge \kappa\ \text{for some}\ s\in F\}| < \frac{8 \delta^2 |F|}{(1- c)^2 \kappa^2}\cdot d.$$
In other words, if we associate each $\theta\in \Theta_d$ with $X^d_{F, \delta, \sigma, V, \theta}$, the set of all $(x_1', \cdots, x_d')$ in $X^d_{F, \delta, \sigma}\cap V$ with
$\theta= |\{i\in \{1, \cdots, d\}: \rho (s x_i, s x_i')\ge \kappa\ \text{for some}\ s\in F\}|$, then
\begin{equation} \label{1107162009}
\bigcup_{\theta\in \Theta_d} X^d_{F, \delta, \sigma, V, \theta}= X^d_{F, \delta, \sigma}\cap V.
\end{equation}
  For each $\theta\in \Theta_d$, if $(x_1", \cdots, x_d"), (x_1^\#, \cdots, x_d^\#)\in X^d_{F, \delta, \sigma, V, \theta}$ then, by the selection of $F$, $\rho (x_i", x_i)< \frac{\tau}{2}, \rho (x_i^\#, x_i)< \frac{\tau}{2}$ and then $\rho (x_i", x_i^\#)< \tau$ for all $i\in \{1, \cdots, d\}\setminus \theta$, thus $X^d_{F, \delta, \sigma, V, \theta}$ can be covered by at most $|\mathcal{V}|^{|\theta|}$ many elements of $\mathcal{V}^d$. This implies
\begin{eqnarray*}
N (\mathcal{V}^d, X^d_{F, \delta, \sigma}\cap V)&\le & \sum_{\theta\in \Theta_d} N (\mathcal{V}^d, X^d_{F, \delta, \sigma, V, \theta})\ (\text{using \eqref{1107162009}}) \\
&\le & |\Theta_d|\cdot |\mathcal{V}|^{\frac{8 \delta^2 |F|}{(1- c)^2 \kappa^2}\cdot d}< e^{\varepsilon d}\ (\text{using \eqref{1207162007}}),
\end{eqnarray*}
and hence $h (G, \mathcal{V}| \mathcal{U})\le \varepsilon$ by the definition, finishing the proof.
\end{proof}

Observing that, for $\mathcal{V}_1, \mathcal{V}_2\in \mathcal{C}_X$ and $K\subset X$,
\begin{equation} \label{1305162351}
N (\mathcal{V}_1, K)\le N (\mathcal{V}_2, K)\cdot \max_{V\in \mathcal{V}_2} N (\mathcal{V}_1, K\cap V),
\end{equation}
it is direct to obtain the following easy while useful observation.

\begin{lemma} \label{1207121301}
Let $\mathcal{U}_1, \mathcal{U}_2\in \mathcal{C}_X$ and $\mu\in M (X)$. Then
$$h_\mu (G, \mathcal{U}_1)\le h_\mu (G, \mathcal{U}_2)+ h (G, \mathcal{U}_1| \mathcal{U}_2)\ \text{and}\ h_\mu (G, X)\le h_\mu (G, \mathcal{U}_2)+ h (G, X| \mathcal{U}_2),$$
$$h (G, \mathcal{U}_1)\le h (G, \mathcal{U}_2)+ h (G, \mathcal{U}_1| \mathcal{U}_2)\ \text{and}\ h (G, X)\le h (G, \mathcal{U}_2)+ h (G, X| \mathcal{U}_2).$$
\end{lemma}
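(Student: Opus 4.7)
The plan is to obtain each of the four inequalities by first establishing a combinatorial estimate at the finite stage via the observation \eqref{1305162351}, and then passing to limits and to infima/suprema in the appropriate order.

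First, fix $F\in \mathfrak{F}_G$ and $\delta>0$, and apply \eqref{1305162351} with $\mathcal{V}_1= \mathcal{U}_1^{d_i}$, $\mathcal{V}_2= \mathcal{U}_2^{d_i}$, and either $K= X^{d_i}_{F,\delta,\sigma_i}$ or, when a measure is present, $K= X^{d_i}_{F,\delta,\sigma_i,\mu,L}$ for $L\in \mathfrak{F}_{C(X)}$. Taking $\log$, dividing by $d_i$, and using $\limsup(a_i+ b_i)\le \limsup a_i+ \limsup b_i$, one obtains
$$h_{F,\delta}(G, \mathcal{U}_1)\le h_{F,\delta}(G, \mathcal{U}_2)+ h_{F,\delta}(G, \mathcal{U}_1|\mathcal{U}_2),$$
and, using the inclusion $X^{d_i}_{F,\delta,\sigma_i,\mu,L}\cap V\subset X^{d_i}_{F,\delta,\sigma_i}\cap V$ to bound the conditional factor by its unconditional counterpart,
$$h_{F,\delta,\mu,L}(G, \mathcal{U}_1)\le h_{F,\delta,\mu,L}(G, \mathcal{U}_2)+ h_{F,\delta}(G, \mathcal{U}_1|\mathcal{U}_2).$$

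Second, I pass to infima. The relevant monotonicity is that if $F_1\subset F$ and $\delta\le \delta_1$, then the model space $X^d_{F,\delta,\sigma}$ shrinks, so each of $h_{F,\delta}(G,\cdot)$, $h_{F,\delta,\mu,L}(G,\cdot)$ and $h_{F,\delta}(G,\cdot|\cdot)$ is monotone in $(F,\delta)$. Consequently, for any two pairs $(F_1,\delta_1)$ and $(F_2,\delta_2)$, taking $F= F_1\cup F_2$ and $\delta= \min(\delta_1,\delta_2)$ gives
$$h_{F,\delta}(G, \mathcal{U}_2)+ h_{F,\delta}(G, \mathcal{U}_1|\mathcal{U}_2)\le h_{F_1,\delta_1}(G, \mathcal{U}_2)+ h_{F_2,\delta_2}(G, \mathcal{U}_1|\mathcal{U}_2).$$
Taking infima over $(F_1,\delta_1)$ and $(F_2,\delta_2)$ independently on the right yields $h(G, \mathcal{U}_1)\le h(G, \mathcal{U}_2)+ h(G, \mathcal{U}_1|\mathcal{U}_2)$; the same manoeuvre, now also over the additional parameter $L$, produces $h_\mu(G, \mathcal{U}_1)\le h_\mu(G, \mathcal{U}_2)+ h(G, \mathcal{U}_1|\mathcal{U}_2)$.

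Third, to obtain the statements involving $h(G, X|\mathcal{U}_2)$, I take the supremum over $\mathcal{U}_1\in \mathcal{C}_X^o$ of the two inequalities just established, using $h(G, X)= \sup_{\mathcal{U}_1\in \mathcal{C}_X^o} h(G, \mathcal{U}_1)$, $h_\mu(G, X)= \sup_{\mathcal{U}_1\in \mathcal{C}_X^o} h_\mu(G, \mathcal{U}_1)$, and $h(G, X|\mathcal{U}_2)= \sup_{\mathcal{U}_1\in \mathcal{C}_X^o} h(G, \mathcal{U}_1|\mathcal{U}_2)$. The only delicate point is bookkeeping of the convention $r_1+ r_2= -\infty$ whenever a summand equals $-\infty$: if $h_{F,\delta}(G, \mathcal{U}_1|\mathcal{U}_2)= -\infty$ for some $(F,\delta)$, then $X^{d_i}_{F,\delta,\sigma_i}$ must be empty for every sufficiently large $i$, which simultaneously collapses each ``$h_{F,\delta}$'' quantity on the left of the two finite-stage inequalities above to $-\infty$ as well, so the inequalities remain valid. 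This convention check is the only obstacle; otherwise the proof reduces to \eqref{1305162351} together with the monotonicity of the sofic model spaces in $(F,\delta,L)$.
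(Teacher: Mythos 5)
Your proposal is correct and follows essentially the same route as the paper: both reduce everything to the covering estimate \eqref{1305162351} applied at the finite stage (with $K$ the relevant model space), then pass to limits, the only cosmetic difference being that the paper realizes the infima as limits along a cofinal sequence $(F_n,\delta_n)$ while you invoke directedness of the parameters to combine two independent infima. Your treatment of the $-\infty$ conventions (each $h(G,\mathcal{U}_1|\mathcal{U}_2)$ being bounded above by $\log N(\mathcal{U}_1,X)$, so the first inequality forces $h_\mu(G,\mathcal{U}_1)=-\infty$ whenever $h_\mu(G,\mathcal{U}_2)=-\infty$) matches the paper's own remark and closes the only delicate point.
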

\begin{proof}
Let $\{F_n: n\in \mathbb{N}\}\subset \mathfrak{F}_G$ increase to the whole group $G$ and $\{\delta_n> 0: n\in \mathbb{N}\}$ decrease to 0. By the definitions it is direct to see
$$h_\mu (G, \mathcal{U})= \inf_{L\in \mathfrak{F}_{C (X)}} \lim_{n\rightarrow \infty} h_{F_n, \delta_n, \mu, L} (G, \mathcal{U})$$
for each $\mathcal{U}\in \mathcal{C}_X$ and
$$h (G, \mathcal{U}_1| \mathcal{U}_2)= \lim_{n\rightarrow \infty} h_{F_n, \delta_n} (G, \mathcal{U}_1| \mathcal{U}_2).$$
Now for each $n\in \mathbb{N}$ and $L\in \mathfrak{F}_{C (X)}$, using \eqref{1305162351} it is easy to obtain
$$h_{F_n, \delta_n, \mu, L} (G, \mathcal{U}_1)\le h_{F_n, \delta_n, \mu, L} (G, \mathcal{U}_2)+ h_{F_n, \delta_n} (G, \mathcal{U}_1| \mathcal{U}_2),$$
and then $h_\mu (G, \mathcal{U}_1)\le h_\mu (G, \mathcal{U}_2)+ h (G, \mathcal{U}_1| \mathcal{U}_2)$ by taking limits on both sides, in particular, $h_\mu (G, \mathcal{U}_1)= -\infty$ when $h_\mu (G, \mathcal{U}_2)= -\infty$ (and hence $h_\mu (G, X)= - \infty$ if and only if $h_\mu (G, \mathcal{U}_2)= -\infty$), which implies $h_\mu (G, X)\le h_\mu (G, \mathcal{U}_2)+ h (G, X| \mathcal{U}_2)$ even if $h (G, X| \mathcal{U}_2)= \infty$ (recalling our technical convention of $- \infty+ \infty= - \infty$). The remaining items can be proved similarly.
\end{proof}

As direct corollaries, we have:

\begin{corollary} \label{1207122131}
$h^* (G, X)< \infty$ if and only if $h (G, X)< \infty$, and $h^* (G, X)= - \infty$ if and only if $h (G, X)= - \infty$.
\end{corollary}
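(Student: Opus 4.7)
The plan is to derive both equivalences directly from two facts already available: the trivial inequality $h^*(G,X)\le h(G,X)$ (which follows from taking $\mathcal{U}_2=\{X\}$ in the definition of $h^*$, together with the observation $h(G,X\mid\{X\})=h(G,X)$ noted just before the definitions), and the comparison inequality $h(G,X)\le h(G,\mathcal{U}_2)+h(G,X\mid\mathcal{U}_2)$ from Lemma \ref{1207121301}, combined with the elementary bound $h(G,\mathcal{U}_2)\le \log N(\mathcal{U}_2,X)<\infty$ for any $\mathcal{U}_2\in\mathcal{C}_X^o$.

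For the first equivalence, the forward direction $h(G,X)<\infty\Rightarrow h^*(G,X)<\infty$ is immediate from $h^*(G,X)\le h(G,X)$. For the converse, suppose $h^*(G,X)<\infty$. By the definition of $h^*$ as the infimum over $\mathcal{U}_2\in\mathcal{C}_X^o$, we may pick some $\mathcal{U}_2\in\mathcal{C}_X^o$ with $h(G,X\mid\mathcal{U}_2)<\infty$. Then Lemma \ref{1207121301} gives $h(G,X)\le h(G,\mathcal{U}_2)+h(G,X\mid\mathcal{U}_2)$; since $h(G,\mathcal{U}_2)\le \log N(\mathcal{U}_2,X)<\infty$, the right-hand side is $<\infty$ (using the paper's conventions: if either summand is $-\infty$ the sum is $-\infty<\infty$, otherwise both are finite and the sum is finite).

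For the second equivalence, if $h(G,X)=-\infty$ then $h^*(G,X)\le h(G,X)=-\infty$ forces $h^*(G,X)=-\infty$. Conversely, if $h^*(G,X)=-\infty$, then since each $h(G,X\mid\mathcal{U}_2)$ lies in $[0,\infty]\cup\{-\infty\}$, the infimum being $-\infty$ forces the existence of some $\mathcal{U}_2\in\mathcal{C}_X^o$ with $h(G,X\mid\mathcal{U}_2)=-\infty$. Plugging this into Lemma \ref{1207121301} and invoking the convention $r_1+r_2=-\infty$ whenever one summand is $-\infty$, we conclude $h(G,X)\le -\infty$, i.e.\ $h(G,X)=-\infty$.

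I do not anticipate a genuine obstacle: the only subtlety is a bookkeeping one, namely keeping track of the two sign conventions on $\pm\infty$ so that the inequality from Lemma \ref{1207121301} propagates correctly when one of the terms takes an infinite value. Once those cases are handled as above, both equivalences fall out in a few lines.
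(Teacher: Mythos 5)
Your proof is correct and follows exactly the route the paper intends: the corollary is stated there as a direct consequence of Lemma \ref{1207121301} (via $h (G, X)\le h (G, \mathcal{U}_2)+ h (G, X| \mathcal{U}_2)$ with $h (G, \mathcal{U}_2)\le \log N (\mathcal{U}_2, X)< \infty$) together with the trivial inequality $h^* (G, X)\le h (G, X)$, and your handling of the $\pm\infty$ conventions is the only bookkeeping required.
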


\begin{corollary} \label{1207162016}
Assume that $(X, G)$ is $h$-expansive. Then there exists $\mathcal{U}\in \mathcal{C}_X^o$ with
$$h (G, X)= h (G, \mathcal{U})\ \text{and}\ h_\mu (G, X)= h_\mu (G, \mathcal{U})\ \text{for each}\ \mu\in M (X).$$
\end{corollary}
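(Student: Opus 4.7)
The plan is to read this statement directly off Lemma \ref{1207121301}. First, using the very definition of $h$-expansiveness, I would fix $\mathcal{U}\in \mathcal{C}_X^o$ with $h(G, X\mid \mathcal{U})\le 0$. This $\mathcal{U}$ will be the cover that realizes both claimed equalities.

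Applying Lemma \ref{1207121301} with $\mathcal{U}_2=\mathcal{U}$ yields, for every $\mu\in M(X)$,
\[
h_\mu(G,X)\le h_\mu(G,\mathcal{U})+h(G,X\mid \mathcal{U})\le h_\mu(G,\mathcal{U}),
\]
and analogously $h(G,X)\le h(G,\mathcal{U})+h(G,X\mid \mathcal{U})\le h(G,\mathcal{U})$. The reverse inequalities $h_\mu(G,\mathcal{U})\le h_\mu(G,X)$ and $h(G,\mathcal{U})\le h(G,X)$ are immediate from the identities $h_\mu(G,X)=\sup_{\mathcal{V}\in \mathcal{C}_X^o}h_\mu(G,\mathcal{V})$ and $h(G,X)=\sup_{\mathcal{V}\in \mathcal{C}_X^o}h(G,\mathcal{V})$ recorded in Sections 2 and 3. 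Chaining the two one-sided bounds produces the desired identities.

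I do not expect any genuine obstacle; the only thing worth tracking is the $\pm\infty$ convention fixed in Section 2. If $h_\mu(G,\mathcal{U})=-\infty$, the displayed bound forces $h_\mu(G,X)=-\infty$, so the equality still holds at this boundary value, and the same collapse handles the topological side. Thus the corollary reduces to a direct application of Lemma \ref{1207121301} together with the definition of $h$-expansiveness, without any further calculation.
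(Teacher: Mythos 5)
Your argument is correct and is exactly the intended derivation: the paper presents this as a direct corollary of Lemma \ref{1207121301}, obtained by taking $\mathcal{U}_2=\mathcal{U}$ with $h(G,X\mid\mathcal{U})\le 0$ and combining the resulting upper bounds with the trivial inequalities $h_\mu(G,\mathcal{U})\le h_\mu(G,X)$ and $h(G,\mathcal{U})\le h(G,X)$. Your remark on the $-\infty$ convention is also consistent with how the paper handles that boundary case in the proof of Lemma \ref{1207121301}.
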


Moreover, we can prove the following result.

\begin{theorem} \label{1207121342}
Let $\mu\in M (X)$. Then
\begin{equation} \label{1310281611}
\limsup_{\nu\rightarrow \mu} h_\nu (G, X)\le h_\mu (G, X)+ h^* (G, X).
\end{equation}
In particular, if $(X, G)$ is asymptotically $h$-expansive then the function $h_\bullet (G, X): M(X)\rightarrow [0,\infty)\cup \{-\infty\}$ is upper semi-continuous.
\end{theorem}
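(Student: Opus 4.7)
The plan is to derive the main inequality \eqref{1310281611} by coupling Lemma \ref{1207121301} with Proposition \ref{1207102137}: the former turns $h_\nu(G,X)$ into a quantity that depends on $\nu$ only through the more manageable $h_\nu(G,\mathcal{U})$, and the latter makes that manageable quantity upper semi-continuous. Taking $\limsup$ in $\nu$ and then infimizing over $\mathcal{U}\in\mathcal{C}_X^o$ produces $h^*(G,X)$ on the right. The ``in particular'' claim will drop out by feeding $h^*(G,X)\le 0$ into \eqref{1310281611} after first using Corollary \ref{1207122131} to rule out the value $+\infty$.

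Concretely, I would fix $\mathcal{U}\in\mathcal{C}_X^o$ and start from the pointwise bound supplied by Lemma \ref{1207121301},
$$h_\nu(G,X)\le h_\nu(G,\mathcal{U})+h(G,X|\mathcal{U}) \qquad (\nu\in M(X)).$$
Applying Proposition \ref{1207102137} to $\nu\mapsto h_\nu(G,\mathcal{U})$ (which is upper semi-continuous into $[0,\log N(\mathcal{U},X)]\cup\{-\infty\}$), then taking $\limsup_{\nu\to\mu}$ on both sides, gives
$$\limsup_{\nu\to\mu}h_\nu(G,X)\le h_\mu(G,\mathcal{U})+h(G,X|\mathcal{U})\le h_\mu(G,X)+h(G,X|\mathcal{U}),$$
where the second step uses $h_\mu(G,X)=\sup_{\mathcal{V}\in\mathcal{C}_X^o}h_\mu(G,\mathcal{V})$. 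In the generic regime $h_\mu(G,X)\in[0,\infty)$, infimizing over $\mathcal{U}\in\mathcal{C}_X^o$ passes the constant $h_\mu(G,X)$ through and yields \eqref{1310281611}.

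The step that needs real attention is the endpoint bookkeeping forced by the paper's $\pm\infty$ conventions. If $h_\mu(G,X)=-\infty$, then Lemma \ref{1207121301} forces $h_\mu(G,\mathcal{U})=-\infty$ for every $\mathcal{U}\in\mathcal{C}_X^o$, and upper semi-continuity in the extended-real sense gives $\limsup_{\nu\to\mu}h_\nu(G,\mathcal{U})=-\infty$; the displayed bound then collapses to $\limsup_{\nu\to\mu}h_\nu(G,X)=-\infty$, matching the convention $-\infty+r=-\infty$ on the right-hand side of \eqref{1310281611}. If $h_\mu(G,X)=\infty$ then $h(G,X)=\infty$ and Corollary \ref{1207122131} excludes $h^*(G,X)=-\infty$, so the right-hand side of \eqref{1310281611} is $\infty$ by the second convention and the inequality is automatic. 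I expect this case analysis to be the only real obstacle.

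Finally, for the ``in particular'' conclusion, asymptotic $h$-expansiveness gives $h^*(G,X)\le 0$, so by Corollary \ref{1207122131} we have $h(G,X)<\infty$, and hence $h_\nu(G,X)\le h(G,X)<\infty$ for every $\nu\in M(X)$, confirming that $h_\bullet(G,X)$ is indeed $[0,\infty)\cup\{-\infty\}$-valued. Substituting $h^*(G,X)\le 0$ into \eqref{1310281611} gives $\limsup_{\nu\to\mu}h_\nu(G,X)\le h_\mu(G,X)$ at each $\mu\in M(X)$, which is exactly the required upper semi-continuity.
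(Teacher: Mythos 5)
Your proposal is correct and follows essentially the same route as the paper: fix $\mathcal{U}\in\mathcal{C}_X^o$, combine Lemma \ref{1207121301} with the upper semi-continuity of $h_\bullet(G,\mathcal{U})$ from Proposition \ref{1207102137}, take the infimum over $\mathcal{U}$, and then feed $h^*(G,X)\le 0$ together with Corollary \ref{1207122131} into \eqref{1310281611} for the final claim. Your extra bookkeeping of the $\pm\infty$ conventions is consistent with the paper's stated conventions and only makes explicit what the paper leaves implicit.
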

\begin{proof}
Let $\mathcal{U}\in \mathcal{C}_X^o$. By Proposition \ref{1207102137} and Lemma \ref{1207121301} we have
\begin{eqnarray*}
\limsup_{\nu\rightarrow \mu} h_\nu (G, X)&\le & \limsup_{\nu\rightarrow \mu} h_\nu (G, \mathcal{U})+ h (G, X| \mathcal{U}) \\
&\le & h_\mu (G, \mathcal{U})+ h (G, X| \mathcal{U})\le h_\mu (G, X)+ h (G, X| \mathcal{U}).
\end{eqnarray*}
Then \eqref{1310281611} follows directly by taking the infimum over all $\mathcal{U}\in \mathcal{C}_X^o$.

Now we assume that $(X, G)$ is asymptotically $h$-expansive, that is, $h^* (G, X)\le 0$. Then $h (G, X)< \infty$ by Corollary \ref{1207122131}, and hence $h_\eta (G, X)\in [0,\infty)\cup \{-\infty\}$ for each $\eta\in M (X)$ by \eqref{1310021806} (recalling the fact that $h_\eta (G, X)= - \infty$ for each $\eta\in M (X)\setminus M (X, G)$).
 Moreover, using \eqref{1310281611} we obtain
 $$\limsup_{\nu\rightarrow \mu} h_\nu (G, X)\le h_\mu (G, X)+ h^* (G, X)\le h_\mu (G, X)$$
 from our technical convention.
 This finishes the proof.
\end{proof}

\begin{remark}
As a direct corollary of Theorem \ref{1207121342}, we have: if the action $(X, G)$ is asymptotically h-expansive then both of its sofic topological mean dimension and its sofic metric mean dimension with respect to any compatible metric are at most zero \cite[Proposition 4.3 and Theorem 6.1]{LAM}. For the definition of sofic topological mean dimension and sofic metric mean dimension see \cite[\S 2 and \S 4]{LAM}, respectively.
\end{remark}

Combining Theorem \ref{1207121342} with \eqref{1310021806} one has:

\begin{corollary} \label{1310281312}
Each asymptotically $h$-expansive action of a sofic group admits a measure with maximal entropy.
\end{corollary}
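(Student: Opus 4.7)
The plan is to derive the existence of a measure of maximal entropy directly from Theorem~\ref{1207121342} combined with the variational principle~\eqref{1310021806}, using nothing more than the standard fact that an upper semi-continuous real-valued function on a non-empty compact space attains its supremum. First I would dispose of the degenerate case: if $M(X,G) = \emptyset$ then by convention $h(G,X) = -\infty$, and the statement is vacuous (there is no invariant measure at all to consider), so assume $M(X,G) \neq \emptyset$. Since $M(X,G)$ is a non-empty compact metrizable space in the weak star topology (noted in Section~2), this sets up the compactness input needed.

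Now I would invoke Theorem~\ref{1207121342}, whose hypothesis of asymptotic $h$-expansiveness is exactly what we have assumed. This gives us that the map $\nu \mapsto h_\nu(G,X)$, from $M(X)$ to $[0,\infty) \cup \{-\infty\}$, is upper semi-continuous. The restriction of an upper semi-continuous function to a subspace remains upper semi-continuous, so in particular $h_\bullet(G,X)|_{M(X,G)}$ is upper semi-continuous on the compact space $M(X,G)$. The standard attainment principle for u.s.c.\ functions then yields some $\mu_0 \in M(X,G)$ with
$$ h_{\mu_0}(G,X) = \sup_{\nu \in M(X,G)} h_\nu(G,X). $$
Finally, by the variational principle~\eqref{1310021806}, this supremum coincides with $h(G,X)$, so $\mu_0$ is a measure of maximal entropy.

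I do not anticipate any real obstacle here; the statement is a formal corollary, and all substantive work has already been done in Theorem~\ref{1207121342} (upper semi-continuity) and in the variational principle~\eqref{1310021806} recalled from \cite{KL11Inven, Zhang12}. The one place to be careful is the possibility of the value $-\infty$ in the codomain: but an u.s.c.\ function into $[-\infty,\infty)$ still attains its supremum on a non-empty compact space by the usual argument (the super-level sets $\{f \geq a\}$ are closed, and any decreasing sequence $a_n \uparrow \sup f$ yields a nested sequence of non-empty closed sets whose intersection is non-empty by compactness), so the argument goes through uniformly whether $h(G,X)$ is finite, zero, or $-\infty$.
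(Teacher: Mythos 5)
Your proposal is correct and follows essentially the same route as the paper, which derives the corollary by combining Theorem~\ref{1207121342} (upper semi-continuity of $h_\bullet(G,X)$ under asymptotic $h$-expansiveness) with the variational principle~\eqref{1310021806}. The additional details you supply (compactness of $M(X,G)$, attainment of the supremum by an upper semi-continuous function with values in $[-\infty,\infty)$, and the vacuous case $M(X,G)=\emptyset$) are exactly the routine steps the paper leaves implicit.
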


In general, the converse does not hold, for example, any $\mathbb{Z}$-action with infinite entropy admits a measure with infinite entropy, whereas, it is not asymptotically $h$-expansive. Furthermore, \cite[Example 6.4]{Mi1} shows us a $\mathbb{Z}$-action with finite entropy such that it is not asymptotically $h$-expansive, while each invariant measure has maximal entropy.
Observe that, as we shall show later, the definitions of weak expansiveness given here for actions of sofic groups are equivalent to definitions given in the same spirit of Misiurewicz's ideas when the group is amenable.

\begin{remark}
Theorem \ref{expansivesofic} is a consequence of Theorem \ref{1207121257} and Theorem \ref{1207121342}.
\end{remark}

\section{Profinite actions}

In this section we provide our first interesting non-trivial $h$-expansive action of a sofic group using the language of the profinite action.

\medskip

Recall that the action $(X, G)$ is \emph{distal} if $\inf_{g\in G} \rho (g x, g y)> 0$ for all distinct
$x, y\in X$, and \emph{equicontinuous} if for each $\delta> 0$ there exists $\varepsilon> 0$ such that $\rho (x, y)\le \varepsilon$ implies $\rho (g x, g y)\le \delta$ for all $g\in G$.

The following result should be known, we provide here a proof for completeness.

 \begin{lemma} \label{easy}
Assume that the action $(X, G)$ is equicontinuous. Then it is distal. And if additionally $X$ is infinite then it is not expansive.
 \end{lemma}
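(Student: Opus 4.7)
The plan is to exploit the fact that, since $G$ acts by homeomorphisms, the equicontinuity hypothesis applies symmetrically to each $g$ and its inverse $g^{-1}$. For distality, given distinct $x, y \in X$, I would set $\tau := \rho(x,y) > 0$ and choose $\varepsilon > 0$ from the definition of equicontinuity applied to the parameter $\delta := \tau/2$. Then for any $g \in G$, if we had $\rho(gx, gy) \le \varepsilon$, the same equicontinuity condition applied to the element $g^{-1} \in G$ would yield $\rho(x,y) = \rho(g^{-1}(gx), g^{-1}(gy)) \le \tau/2$, contradicting the choice of $\tau$. Therefore $\rho(gx, gy) > \varepsilon$ for every $g \in G$, so $\inf_{g \in G} \rho(gx, gy) \ge \varepsilon > 0$, establishing that $(X, G)$ is distal.

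For the second claim, I would argue by contradiction: suppose $(X, G)$ were expansive with some expansive constant $\kappa > 0$. Let $\varepsilon > 0$ be the modulus produced by the equicontinuity hypothesis applied to $\delta := \kappa$. Since $X$ is a compact metric space which is infinite, it cannot be discrete, so it has an accumulation point, and in particular admits a pair of distinct points $x, y \in X$ with $\rho(x, y) \le \varepsilon$. Equicontinuity then yields $\rho(gx, gy) \le \kappa$ for every $g \in G$, directly contradicting the existence of some $g \in G$ with $\rho(gx, gy) > \kappa$ required by expansiveness. Hence $(X, G)$ cannot be expansive.

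The argument is entirely elementary; no step looks like a serious obstacle. The only mild subtlety is the symmetric use of equicontinuity through inverses in the first part, which is legitimate precisely because $G$ is a group acting by homeomorphisms, so $g^{-1} \in G$ for each $g \in G$ and the equicontinuity hypothesis may be invoked for $g^{-1}$ as well.
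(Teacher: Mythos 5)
Your proof is correct and follows essentially the same route as the paper's: for distality you apply the (uniform) equicontinuity estimate to the inverse element $g^{-1}$ to contradict a small value of $\rho(gx,gy)$, and for non-expansiveness you use that an infinite compact metric space contains distinct points at arbitrarily small distance, which equicontinuity then keeps within the would-be expansive constant under every $g$. The only cosmetic difference is that you phrase the distality part directly rather than by contradiction.
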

 \begin{proof}
First we prove that $(X, G)$ is distal. Else, there exist distinct points $x_1, x_2\in X$ with $\inf_{g\in G} \rho (g x_1, g x_2)= 0$. As $(X, G)$ is equicontinuous, there exists $\varepsilon> 0$ such that $\rho (x, y)< \varepsilon$ implies $\rho (g x, g y)\le \frac{1}{2} \rho (x_1, x_2)$ for each $g\in G$. Let $g'\in G$ be such that $\rho (g' x_1, g' x_2)< \varepsilon$. Then $0< \rho (x_1, x_2)= \rho ((g')^{- 1} g' x_1, (g')^{- 1} g' x_2)\le \frac{1}{2} \rho (x_1, x_2)$ by the selection of $\varepsilon$, a contradiction.

Now additionally we assume that $X$ is infinite. If $(X, G)$ is expansive, then there exists $\delta> 0$ with  $\sup_{g\in G} \rho (g x, g y)> \delta$ for all distinct
$x, y\in X$. Using again the equicontinuity of $(X, G)$, we could choose $\varepsilon'> 0$ such that $\rho (x, y)< \varepsilon'$ implies $\rho (g x, g y)\le \delta$ for each $g\in G$. As $X$ is infinite, by the compactness of $X$ we could choose distinct points $y_1, y_2\in X$ with $\rho (y_1, y_2)< \varepsilon'$, a contradiction to the selection of $\delta$. That is, $(X, G)$ is not expansive, finishing the proof.
 \end{proof}

  See \cite{Au} for a more detailed story of distal actions and equicontinuous actions.

  \medskip

  Observe that, by definition each action of a sofic group is $h$-expansive if it has topological entropy at most zero, and Kerr and Li proved that each distal action of a sofic group has topological entropy at most zero \cite[Corollary 8.5]{KLCMS}. Note that \cite[Corollary 8.5]{KLCMS} is stated for the sofic topological entropy defined using ultrafilter. However, the limsup version of it follows directly from the ultrafilter version,  since one can pass to a subsequence where the quantity converges and then one can use any free ultrafilter on this subsequence. Thus each distal action of a sofic group is $h$-expansive.
With the help of this observation, we can provide our first interesting $h$-expansive example.

Let $G$ be a countable group. A \textit{chain} of $G$ is a sequence $G= G_0\geq G_1\geq \cdots$ of subgroups with finite indices in $G$. For a chain $(G_n)$, we have a tree structure $T (G, (G_n))$ defined naturally as follows. The vertices are $\{g G_n: n\in \Nb, g\in G\}$ and $(g_1 G_n, g_2 G_m)$ is an edge if $m= n+ 1$ and $g_2 G_m\subset g_1 G_n$. The boundary $\partial T (G, (G_n))$ of $T (G, (G_n))$ consists of all sequences $(x_0, x_1, \cdots)$ of vertices with $x_n$ adjacent to $x_{n+ 1}$ for each $n\in \mathbb{Z}_+$. Then $\partial T (G, (G_n))$ is a compact metrizable space endowed with the topology generated by the open basis consisting of all subsets $O_x= \{(x_0, x_1, \cdots)\in \partial T (G, (G_n)): x_N= x\}$ with $x\in G/G_N$ and $N\in \mathbb{Z}_+$. The natural left actions of $G$ on $G/G_n$ induce the \emph{profinite action} $(\partial T (G, (G_n)), G)$, an action of $G$ on $\partial T (G, (G_n))$ by homeomorphisms.
 In this case the profinite action $(\partial T (G, (G_n)), G)$ is equicontinuous, since for any $x= g G_n\in G/G_n$ and $h\in G$ we have $h O_x= O_y$ with $y= h g G_n\in G/G_n$.

 Combining with the above discussions, we obtain:

 \begin{proposition} \label{1310022211}
 Let $G$ be a countable sofic group and $(G_n)$ a chain of $G$. Then the profinite action of $G$ on $\partial T (G, (G_n))$ is h-expansive. Furthermore if $\partial T (G, (G_n))$ of $T (G, (G_n))$ is infinite then the action is not expansive.
 \end{proposition}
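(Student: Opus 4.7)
The plan is to string together the observations assembled just before the proposition: \emph{equicontinuous} $\Longrightarrow$ \emph{distal} $\Longrightarrow$ topological entropy at most zero $\Longrightarrow$ $h$-expansive, and then to dispose of the second clause by an immediate appeal to the second half of Lemma \ref{easy}. Everything needed is already in the paper; the proof is essentially a bookkeeping exercise.

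The first step is to pin down a compatible metric on $\partial T(G, (G_n))$ in which equicontinuity is transparent. A natural choice is a metric $\rho$ satisfying $\rho(\omega, \omega') \leq 2^{-N}$ whenever $\omega$ and $\omega'$ have the same $N$-th coordinate (equivalently, $\omega, \omega' \in O_x$ for a common $x \in G/G_N$), and $\rho(\omega, \omega') \geq 2^{-N-1}$ otherwise; such a metric generates the given compact topology. The structural fact recorded in the paragraph preceding the proposition is that for every $h \in G$ and every cylinder $O_x$ with $x \in G/G_N$ one has $h O_x = O_{hx}$, again a cylinder at the same level $N$. Consequently, $\rho(\omega, \omega') \leq 2^{-N}$ forces $\rho(h\omega, h\omega') \leq 2^{-N}$ for every $h \in G$, which is equicontinuity of the profinite action.

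With equicontinuity in hand, the first half of Lemma \ref{easy} yields that the profinite action is distal. Kerr and Li's theorem \cite[Corollary 8.5]{KLCMS}, together with the limsup-versus-ultrafilter remark from the discussion preceding the proposition, then forces $h(G, \partial T(G, (G_n))) \leq 0$. Since the monotonicity $h(G, X| \mathcal{U}) \leq h(G, X)$ is built into the definitions in Section 3, we deduce $h(G, \partial T(G, (G_n))| \mathcal{U}) \leq 0$ for any $\mathcal{U} \in \mathcal{C}_X^o$, which is (in fact strictly stronger than) $h$-expansiveness. For the second clause, if $\partial T(G, (G_n))$ is infinite, then the second half of Lemma \ref{easy} applied to our equicontinuous action on an infinite compact metric space immediately rules out expansiveness.

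There is no serious obstacle; the only point requiring a bit of care is the identification, in a fixed compatible metric, of closeness of two boundary points with their coincidence on cylinders at a sufficiently high level, so that the level-preserving identity $h O_x = O_{hx}$ translates cleanly into equicontinuity. Once that is in place, the argument is a bare concatenation of Lemma \ref{easy}, \cite[Corollary 8.5]{KLCMS}, and the definitional inequality $h(G, X| \mathcal{U}) \leq h(G, X)$.
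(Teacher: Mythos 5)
Your proposal is correct and follows exactly the route the paper takes: establish equicontinuity from the level-preserving identity $hO_x = O_{hx}$, invoke Lemma \ref{easy} for distality (and, in the infinite case, non-expansiveness), apply \cite[Corollary 8.5]{KLCMS} to get topological entropy at most zero, and conclude $h$-expansiveness from the definitional inequality $h(G,X|\mathcal{U}) \le h(G,X)$. Your explicit choice of a compatible (ultra)metric only makes precise the equicontinuity claim the paper states without elaboration.
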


 Dynamical properties of profinite actions have been studied extensively for residually finite groups in \cite{AE,AN}. They were also used to investigate orbit equivalence rigidity for Kazhdan property (T) groups \cite{Ioana, OP}.

\section{Tail entropy for actions of countable discrete amenable groups}

In this section we shall introduce tail entropy for actions of countable discrete amenable groups in the same spirit of Misiurewicz.

\medskip

Recall that $G$ is a countable discrete group. Denote by $e_G$ the unit of $G$.
$G$ is called \emph{amenable}, if there exists a sequence $\{F_n: n\in \mathbb{N}\}\subset \mathfrak{F}_G$, called a
\emph{F\o lner sequence of $G$}, such that
$$\lim_{n\rightarrow \infty} \frac{|g F_n\Delta F_n|}{|F_n|}= 0, \forall g\in G.$$
In the class of countable discrete groups, amenable groups include all solvable groups and groups with subexponential growth.
In the group  $G=\mathbb{Z}$, the sequence $F_n=\{ 0,1,\cdots, n-1\}$ defines a F\o lner sequence, as, indeed, does  $\{ a_n,a_n+1,\cdots,a_n+n- 1 \}$ for any sequence $\{a_n\}_{n\in
\mathbb{N}}\subset \mathbb{Z}$; and in a finite group $G$, if $\{F_n: n\in \mathbb{N}\}$ is a
F\o lner sequence of $G$, then $F_n= G$ once $n$ is large enough.

\medskip

\emph{Throughout this section and the next, we will assume that $G$ is always a countable discrete amenable group.}

\medskip

The well-known Ornstein-Weiss Lemma plays a crucial role in the study of entropy theory for actions of amenable groups \cite{OW} (see also \cite{DZ, HYZ1, RW, We, WZ}).
The following version of it is taken from \cite[1.3.1]{Gronew}.

\begin{proposition} \label{ow-prop-convergence}
Let $f: \mathfrak{F}_G\rightarrow \mathbb{R}$ be a nonnegative function such that $f (E g)= f (E)$ and $f (E\cup F)\le f (E)+ f (F)$ for all $E, F\in \mathfrak{F}_G$ and $g\in G$. Then for any F\o lner sequence $\{F_n: n\in \mathbb{N}\}$ of $G$ the sequence $\left\{\frac{f (F_n)}{|F_n|}: n\in \mathbb{N}\right\}$ converges and the value of the limit is independent of the selection of the F\o lner sequence $\{F_n: n\in \mathbb{N}\}$.
\end{proposition}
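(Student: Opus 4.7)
The plan is to invoke the Ornstein--Weiss quasi-tiling machinery. First note that, setting $M := f(\{e_G\})$, right-invariance gives $f(\{g\}) = M$ for every $g \in G$, and subadditivity then yields $f(E) \le M|E|$ for all $E \in \mathfrak{F}_G$; hence $f(\cdot)/|\cdot|$ takes values in $[0, M]$. Fix any F\o lner sequence $\{F_n\}$ and set $\ell := \liminf_n f(F_n)/|F_n|$. It suffices to show that $\limsup_m f(E_m)/|E_m| \le \ell$ for every F\o lner sequence $\{E_m\}$: specialising $\{E_m\} = \{F_n\}$ then forces $\lim_n f(F_n)/|F_n| = \ell$, and the same bound applied to any other F\o lner sequence gives independence from the choice of sequence.

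Fix $\varepsilon > 0$. The key tool is the Ornstein--Weiss $\varepsilon$-quasi-tiling theorem: one can select finitely many tiles $T_1, \ldots, T_K \in \mathfrak{F}_G$, taken as members $F_{n_j}$ of the fixed F\o lner sequence far enough along so that $f(T_i)/|T_i| \le \ell + \varepsilon$ for every $i$ (while simultaneously satisfying the nested invariance hypotheses required by the quasi-tiling construction), together with invariance parameters $K', \delta' > 0$, such that any $(K', \delta')$-invariant $E \in \mathfrak{F}_G$ admits centre sets $C_1, \ldots, C_K \subset G$ for which $\{T_i c : 1 \le i \le K,\ c \in C_i\}$ is a pairwise disjoint family of subsets of $E$ of total cardinality at least $(1 - \varepsilon)|E|$. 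Writing $E = E_0 \sqcup \bigsqcup_{i,\, c \in C_i} T_i c$ with $|E_0| \le \varepsilon |E|$, the right-invariance $f(T_i c) = f(T_i)$ together with subadditivity gives
\[
f(E) \le f(E_0) + \sum_{i=1}^{K} |C_i|\, f(T_i) \le M \varepsilon |E| + (\ell + \varepsilon) \sum_{i=1}^{K} |C_i|\, |T_i| \le \bigl(\ell + (M+1)\varepsilon\bigr)|E|,
\]
where the last step uses $\sum_i |C_i|\, |T_i| \le |E|$ from disjointness.

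Applying the bound $f(E_m)/|E_m| \le \ell + (M+1)\varepsilon$ for all $m$ large enough (since every F\o lner sequence is eventually $(K', \delta')$-invariant) yields $\limsup_m f(E_m)/|E_m| \le \ell + (M+1)\varepsilon$, and letting $\varepsilon \to 0$ closes the argument. The main obstacle is the quasi-tiling theorem itself, whose proof is a delicate inductive packing argument: one constructs the tiles greedily so that each $T_i$ is far more left-invariant than $T_{i+1}, \ldots, T_K$, ensuring that the ``holes'' left after packing translates of $T_i$ can be filled to within an $\varepsilon$-fraction by subsequent tiles. I would not reproduce this construction, but rather cite it directly from the Ornstein--Weiss source (or the streamlined exposition referenced in the paper).
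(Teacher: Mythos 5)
The paper itself gives no proof of this proposition: it is quoted directly from Gromov's paper \cite[1.3.1]{Gronew} (it is the classical Ornstein--Weiss lemma), so there is no internal argument to compare yours against. Your proposal is the standard Ornstein--Weiss proof and is essentially correct, with the understood caveat that all of the real work is delegated to the quasi-tiling theorem you cite rather than prove. Two small points deserve care. First, the usual statement of the quasi-tiling theorem produces a family $\{T_i c\}$ that is only $\varepsilon$-disjoint, not pairwise disjoint; this is harmless, since subadditivity still gives $f(E)\le f(E_0)+\sum_{i,c} f(T_i c)$ for any covering of $E$ by subsets of $E$, and $\varepsilon$-disjointness yields $\sum_i |C_i|\,|T_i|\le (1-\varepsilon)^{-1}|E|$, which only perturbs your final constant. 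What you genuinely do need --- and what the quasi-tiling theorem provides --- is that the translates $T_i c$ lie \emph{inside} $E$: since $f$ is not assumed monotone, you cannot bound $f(E)$ by $f$ of a strictly larger union. Second, your reduction delivers independence of the F\o lner sequence only after a symmetric application: having shown $\limsup_m f(E_m)/|E_m|\le \ell$ for every F\o lner sequence $\{E_m\}$, you must also exchange the roles of $\{E_m\}$ and $\{F_n\}$ to obtain the reverse inequality $\ell\le \lim_m f(E_m)/|E_m|$. Both are one-line fixes, so the argument stands.
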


\medskip

Let $\mathcal{W}_1, \mathcal{W}_2\in \mathcal{C}_X$. If each element of $\mathcal{W}_1$ is contained in some element of $\mathcal{W}_2$ then we say that $\mathcal{W}_1$ is \emph{finer than $\mathcal{W}_2$} (denoted by $\mathcal{W}_1\succeq \mathcal{W}_2$ or $\mathcal{W}_2\preceq \mathcal{W}_1$).
The join $\mathcal{W}_1\vee \mathcal{W}_2$ is given by
$\mathcal{W}_1\vee \mathcal{W}_2= \{W_1\cap W_2: W_1\in \mathcal{W}_1, W_2\in \mathcal{W}_2\}\in \mathcal{C}_X$,
which extends naturally to a finite collection of covers.
Let $F\in \mathfrak{F}_G$, we set $(\mathcal{W}_1)_F= \bigvee_{g\in F} g^{- 1} \mathcal{W}_1$, and then we consider a nonnegative function $m_{\mathcal{W}_1, \mathcal{W}_2}: \mathfrak{F}_G\rightarrow \mathbb{R}$ given by
 $$m_{\mathcal{W}_1, \mathcal{W}_2} (F)= \max_{K\in (\mathcal{W}_2)_F} \log N ((\mathcal{W}_1)_F, K) \ \text{for each}\ F\in \mathfrak{F}_G.$$

 It is easy to obtain the following useful observation.

 \begin{lemma} \label{welldefine}
 $m_{\mathcal{W}_1, \mathcal{W}_2} (E\cup F)\le m_{\mathcal{W}_1, \mathcal{W}_2} (E)+ m_{\mathcal{W}_1, \mathcal{W}_2} (F)$ for all $E, F\in \mathfrak{F}_G$.
 \end{lemma}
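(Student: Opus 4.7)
The plan is to exploit two elementary facts about joins of covers: first, that $(\mathcal{W}_i)_{E\cup F} = (\mathcal{W}_i)_E \vee (\mathcal{W}_i)_F$ for $i = 1,2$, so every element of $(\mathcal{W}_2)_{E\cup F}$ factors as $K_E \cap K_F$ with $K_E \in (\mathcal{W}_2)_E$ and $K_F \in (\mathcal{W}_2)_F$; second, that a sub-cover of $K_E$ by elements of $(\mathcal{W}_1)_E$ and a sub-cover of $K_F$ by elements of $(\mathcal{W}_1)_F$ can be intersected coordinatewise to produce a sub-cover of $K_E\cap K_F$ by elements of $(\mathcal{W}_1)_{E\cup F}$, whose cardinality is bounded by the product of the two original cardinalities.

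Concretely, I would fix $K \in (\mathcal{W}_2)_{E\cup F}$, choose a factorization $K = K_E \cap K_F$ as above, and select minimal sub-families $\{U_1,\dots,U_n\}\subseteq (\mathcal{W}_1)_E$ covering $K_E$ with $n = N((\mathcal{W}_1)_E, K_E)$ and $\{V_1,\dots,V_m\}\subseteq (\mathcal{W}_1)_F$ covering $K_F$ with $m = N((\mathcal{W}_1)_F, K_F)$. Since each $U_i \cap V_j$ lies in $(\mathcal{W}_1)_E \vee (\mathcal{W}_1)_F = (\mathcal{W}_1)_{E\cup F}$ and $\{U_i \cap V_j\}_{i,j}$ covers $K_E\cap K_F = K$, it follows that
\[
N((\mathcal{W}_1)_{E\cup F}, K) \le nm = N((\mathcal{W}_1)_E, K_E) \cdot N((\mathcal{W}_1)_F, K_F).
\]
Taking logarithms and then the maximum over $K_E$ and $K_F$ separately yields $\log N((\mathcal{W}_1)_{E\cup F}, K) \le m_{\mathcal{W}_1, \mathcal{W}_2}(E) + m_{\mathcal{W}_1, \mathcal{W}_2}(F)$, and finally taking the maximum over $K \in (\mathcal{W}_2)_{E\cup F}$ delivers the subadditivity inequality.

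There is no real obstacle here; the only point worth checking carefully is the factorization $K = K_E \cap K_F$, which must be chosen so that $K_E \in (\mathcal{W}_2)_E$ and $K_F \in (\mathcal{W}_2)_F$. This is immediate from unwinding $(\mathcal{W}_2)_{E\cup F} = \bigvee_{g\in E\cup F} g^{-1}\mathcal{W}_2 = (\mathcal{W}_2)_E \vee (\mathcal{W}_2)_F$: every element of the left-hand side is, by definition of $\vee$, an intersection of one element from $(\mathcal{W}_2)_E$ with one from $(\mathcal{W}_2)_F$. The argument does not use the amenability of $G$ at all, which is appropriate since this lemma is a purely combinatorial preparation for the application of Proposition~\ref{ow-prop-convergence} later on (right-invariance of $m_{\mathcal{W}_1,\mathcal{W}_2}$ will be the companion property one verifies separately).
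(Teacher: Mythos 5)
Your proof is correct and follows essentially the same route as the paper: factor $K=K_E\cap K_F$, take minimal subcovers of $K_E$ and $K_F$, and intersect them to bound $N((\mathcal{W}_1)_{E\cup F},K)$ by the product. The only cosmetic imprecision is that when $E\cap F\neq\emptyset$ the join $(\mathcal{W}_1)_E\vee(\mathcal{W}_1)_F$ is a refinement of, rather than literally equal to, $(\mathcal{W}_1)_{E\cup F}$; but since each $U_i\cap V_j$ is contained in some element of $(\mathcal{W}_1)_{E\cup F}$, the counting bound goes through exactly as in the paper.
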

\begin{proof}
Let $E, F\in \mathfrak{F}_G$. From the definition we choose $K\in (\mathcal{W}_2)_{E\cup F}$ with $m_{\mathcal{W}_1, \mathcal{W}_2} (E\cup F)= \log N ((\mathcal{W}_1)_{E\cup F}, K)$.
 Say $K_1\in (\mathcal{W}_2)_E$ and $K_2\in (\mathcal{W}_2)_F$ with $K= K_1\cap K_2$ (no matter if $E$ and $F$ are disjoint), such $K_1$ and $K_2$ must exist. Now let $\mathcal{V}_1\subset (\mathcal{W}_1)_E$ cover $K_1$ with $|\mathcal{V}_1|= N ((\mathcal{W}_1)_E, K_1)$ and let $\mathcal{V}_2\subset (\mathcal{W}_1)_F$ cover $K_2$ with $|\mathcal{V}_2|= N ((\mathcal{W}_1)_F, K_2)$. Obviously we can cover $K_1\cap K_2$ (i.e. $K$) using the family $\mathcal{V}_1\vee \mathcal{V}_2$. Observing that $|\mathcal{V}_1\vee \mathcal{V}_2|\le |\mathcal{V}_1|\cdot |\mathcal{V}_2|$ and each element of $\mathcal{V}_1\vee \mathcal{V}_2$ is contained in some element of $(\mathcal{W}_1)_{E\cup F}$, we have that $N ((\mathcal{W}_1)_{E\cup F}, K)\le |\mathcal{V}_1\vee \mathcal{V}_2|\le N ((\mathcal{W}_1)_E, K_1)\cdot N ((\mathcal{W}_1)_F, K_2)$, which implies the conclusion directly.
\end{proof}

It is easy to check $G$-invariance of the nonnegative function $m_{\mathcal{W}_1, \mathcal{W}_2}: \mathfrak{F}_G\rightarrow \mathbb{R}$. Observing Lemma \ref{welldefine}, we could apply Proposition \ref{ow-prop-convergence} to define
$$h^a (G, \mathcal{W}_1| \mathcal{W}_2)= \lim_{n\rightarrow \infty} \frac{1}{|F_n|} m_{\mathcal{W}_1, \mathcal{W}_2} (F_n)\ge 0,$$
which is independent of the selection of the F\o lner sequence $\{F_n: n\in \mathbb{N}\}$.
Then we define the \emph{topological entropy of $\mathcal{W}_1$} by 
$$h^a (G, \mathcal{W}_1)= h^a (G, \mathcal{W}_1| \{X\})$$
and define the \emph{tail entropy of $(X, G)$ with respect to $\mathcal{W}_2$} by
$$h^a (G, X| \mathcal{W}_2)= \sup_{\mathcal{U}\in \mathcal{C}_X^o} h^a (G, \mathcal{U}| \mathcal{W}_2),$$
and then define the \emph{topological entropy of $(X, G)$}, $h^a (G, X)$, and \emph{tail entropy of $(X, G)$}, $h^{a, *} (G, X)$, respectively, as: 
$$h^a (G, X)= h^a (G, X| \{X\})\ \left(= \sup_{\mathcal{U}\in \mathcal{C}_X^o} h^a (G, \mathcal{U})\right)$$ and
$$h^{a, *} (G, X)= \inf_{\mathcal{V}\in \mathcal{C}_X^o} h^a (G, X| \mathcal{V})\ \left(= \inf_{\mathcal{V}\in \mathcal{C}_X^o} h^a (G, X| \overline{\mathcal{V}})\right)\ge 0.$$

Recalling that in the special case of $G= \mathbb{Z}$ acting on a compact metric space $X$, equivalently, giving a homeomorphism $T: X\rightarrow X$, the above definition recovers the definition given by Misiurewicz in \cite{Mi1}, which was then used to discuss
 weak expansiveness for $\mathbb{Z}$-actions. In fact, Misiurewicz \cite{Mi1} introduced tail entropy in the setting of a compact Hausdorff space $X$ and a continuous transformation of $X$ into itself.

\section{Comparison between sofic and amenable cases}

The following result is the main result of this section, which shows that our definitions of weak expansiveness for actions of sofic groups are equivalent to definitions given in the same spirit of Misiurewicz's ideas when the group is amenable.

\begin{theorem} \label{1306160126}
$(X, G)$ is asymptotically $h$-expansive if and only if $h^{a, *} (G, X)= 0$.
Similarly, $(X, G)$ is $h$-expansive if and only if $h^a (G, X| \mathcal{V})= 0$ for some $\mathcal{V}\in \mathcal{C}_X^o$.
\end{theorem}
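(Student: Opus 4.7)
The strategy is to reduce both equivalences to the cover-wise identity
\begin{equation*}
h(G, \mathcal{U}_1 | \mathcal{U}_2) = h^a(G, \mathcal{U}_1 | \mathcal{U}_2) \quad \text{for all } \mathcal{U}_1, \mathcal{U}_2 \in \mathcal{C}_X^o.
\end{equation*}
Granted this, taking the supremum over $\mathcal{U}_1$ and then the infimum over $\mathcal{U}_2$ yields $h^*(G, X) = h^{a,*}(G, X)$ as well as $h(G, X|\mathcal{V}) = h^a(G, X|\mathcal{V})$ for every $\mathcal{V} \in \mathcal{C}_X^o$. Since $h^{a,*}(G, X)$ and $h^a(G, X|\mathcal{V})$ are non-negative by their definitions, and since for amenable $G$ the microstate sets $X^{d_i}_{F, \delta, \sigma_i}$ are non-empty for all sufficiently large $i$ (there is always an invariant measure, whose empirical distributions furnish microstates), both sides are automatically bounded below by $0$, so the two asserted equivalences follow at once.

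To establish the identity I would prove both inequalities via the Ornstein--Weiss quasi-tiling machinery in its sofic form, adapting the unconditional arguments of \cite{KL11Inven, KLAJM, Zhang12}. For the upper bound, fix $\varepsilon > 0$ and choose a F\o lner set $F_0 \in \fF_G$ large enough that $\frac{1}{|F_0|}\max_{K \in (\mathcal{U}_2)_{F_0}} \log N((\mathcal{U}_1)_{F_0}, K) \le h^a(G, \mathcal{U}_1|\mathcal{U}_2) + \varepsilon$. Select $F \supset F_0$ and $\delta > 0$ so that any sufficiently good sofic approximation $\sigma_i$ admits an $\varepsilon$-quasi-partition of $\{1,\dots,d_i\}$ into almost-disjoint $F_0$-shaped tiles $T_j = \{\sigma_{i,g}(a_j) : g \in F_0\}$, with $k_i \approx d_i/|F_0|$. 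For a fixed $V \in \mathcal{U}_2^{d_i}$, the microstate condition forces, for each tile $T_j$, the restriction of any $(x_1,\ldots,x_{d_i}) \in X^{d_i}_{F, \delta, \sigma_i} \cap V$ to lie in a specific atom of $(\mathcal{U}_2)_{F_0}$ determined by $V$, so that tile contributes at most $e^{|F_0|(h^a + \varepsilon)}$ choices of $(\mathcal{U}_1)_{F_0}$-atom. A product count over the $k_i$ tiles, together with a $|\mathcal{U}_1|^{\varepsilon d_i}$ factor absorbing the residual coordinates, yields $h_{F, \delta}(G, \mathcal{U}_1|\mathcal{U}_2) \le h^a(G, \mathcal{U}_1|\mathcal{U}_2) + O(\varepsilon)$.

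For the reverse inequality, pick $K \in (\mathcal{U}_2)_{F_0}$ that nearly realizes the maximum defining $h^a(G, \mathcal{U}_1|\mathcal{U}_2)$, together with $N = N((\mathcal{U}_1)_{F_0}, K)$ points $y_1, \dots, y_N \in K$ lying in distinct atoms of $(\mathcal{U}_1)_{F_0}$. Given $F$, $\delta$ and a quasi-tiling $\{T_j\}$ of $\{1, \dots, d_i\}$ as above, each assignment $\varphi: \{1, \dots, k_i\} \to \{1, \dots, N\}$ determines a microstate by setting the coordinate at $\sigma_{i, g}(a_j)$ to $g \cdot y_{\varphi(j)}$ (with any default choice on the residual indices); all $N^{k_i}$ resulting microstates lie in a common element of $\mathcal{U}_2^{d_i}$ dictated by $K$, yet distinct $\varphi$'s produce microstates in distinct elements of $\mathcal{U}_1^{d_i}$. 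This gives $h_{F, \delta}(G, \mathcal{U}_1|\mathcal{U}_2) \ge h^a(G, \mathcal{U}_1|\mathcal{U}_2) - O(\varepsilon)$.

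The main obstacle is the \emph{conditional} nature of the identity: the quasi-tiling has to decouple the single $\mathcal{U}_2^{d_i}$-constraint imposed on a microstate into independent per-tile constraints of the form ``the $F_0$-orbit pattern lies in a specified atom of $(\mathcal{U}_2)_{F_0}$,'' with the residual indices contributing only sub-exponentially. This is a direct conditional analogue of the sofic/amenable entropy identification for unconditional open covers underlying Zhang's framework; the technical content lies in carefully tracking how the $\mathcal{U}_2$-membership structure of a microstate decomposes along the tile structure induced by the sofic approximation.
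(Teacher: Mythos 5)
Your overall architecture --- identify the sofic conditional quantities with the amenable tail quantities and then read off both equivalences --- is the right one, and the aggregate equality $h^*(G,X)=h^{a,*}(G,X)$ you are implicitly after is indeed true (it is Theorem \ref{1409221729} in the paper). The non-negativity remarks and the reduction of the two ``if and only if'' statements to such an identification are fine. The problem is that you base everything on the exact cover-wise identity $h(G,\mathcal{U}_1|\mathcal{U}_2)=h^a(G,\mathcal{U}_1|\mathcal{U}_2)$ for \emph{open} $\mathcal{U}_1,\mathcal{U}_2$, and your sketch of its upper-bound half has a genuine gap at precisely the step you flag as ``the main obstacle.'' You assert that for a fixed $V\in\mathcal{U}_2^{d_i}$ the microstate condition forces the restriction of $(x_1,\dots,x_{d_i})\in X^{d_i}_{F,\delta,\sigma_i}\cap V$ to each tile to lie in a specific atom of $(\mathcal{U}_2)_{F_0}$ determined by $V$. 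This is false: a microstate only satisfies $\rho(sx_c,x_{\sigma_s(c)})<\sqrt{\delta}$ for most indices, so knowing $x_{\sigma_s(c)}\in U_2^{(\sigma_s(c))}$ with $U_2^{(\sigma_s(c))}$ open only places $sx_c$ in a $\sqrt{\delta}$-neighborhood of that open set, not inside it. Hence the tile-base point $x_c$ need not belong to any atom of $(\mathcal{U}_2)_{F_0}$; one can only conclude $x_c\in\bigcap_{s\in F_0}s^{-1}\overline{W_s}$ where each $W_s$ belongs to a coarser open cover containing the $\delta''$-neighborhoods of the sets cut out by $V$.

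This is exactly why the paper does not prove the clean identity. It proves two one-sided inequalities with an open/closed interleaving: Proposition \ref{1306160111} gives $h(G,\mathcal{U}|\mathcal{U}_1)\le h^a(G,\mathcal{U}|\overline{\mathcal{U}_2})$ only for a \emph{closed} conditioning cover $\mathcal{U}_1\in\mathcal{C}_X^c$ refining an open $\mathcal{U}_2$ (using Lemma \ref{1107122331} to convert approximate orbit data into covering numbers of closed atoms of the coarser cover), while Proposition \ref{1306160116} gives $h(G,X|\mathcal{U})\ge h^a(G,X|\mathcal{U})$; the theorem then follows because $h^*$ and $h^{a,*}$ are infima over all open covers, so given $h^a(G,X|\overline{\mathcal{U}_3})<\varepsilon$ one chooses $\mathcal{U}_4\in\mathcal{C}_X^o$ with $\overline{\mathcal{U}_4}\succeq\mathcal{U}_3$ and concludes $h^*(G,X)\le h(G,X|\mathcal{U}_4)\le h(G,X|\overline{\mathcal{U}_4})<\varepsilon$. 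Your lower-bound construction (placing orbit segments of separated points on tiles) is essentially the paper's Proposition \ref{1306160116} and is sound in outline. To repair the proposal, abandon the exact cover-wise identity and instead prove the weaker interleaved inequality, quantifying the perturbation via a $\delta''$-neighborhood argument between a closed cover and an open cover it refines; without that, the tile-decoupling step does not go through.
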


Let $Y$ be a finite set, $\{A_i: i\in I\}\subset \{\emptyset\}\cup \mathfrak{F}_Y$ and $\varepsilon, \delta\ge 0$. We say that $\{A_i: i\in I\}$  \emph{$\delta$-covers} $Y$ if
$\left|\bigcup\limits_{i\in I} A_i\right|\ge \delta |Y|$.
$\{A_i: i\in I\}$ are \emph{$\varepsilon$-disjoint} if there exist pairwise disjoint subsets $B_i\subset A_i$ with $|B_i|\ge (1- \varepsilon) |A_i|$ for each $i\in I$.

The next result is the Rokhlin Lemma for sofic approximations of countable discrete groups \cite[Lemma 4.5]{KLAJM}.

\begin{lemma} \label{1107110016}
Let $\Gamma$ be a countable group with the unit $e$ and $0\le \tau< 1, 0< \eta< 1$. Then there are an $l\in \mathbb{N}$ and $\eta', \eta''> 0$ such that, whenever $e\in E_1\subset \cdots\subset E_l$ are finite subsets of $\Gamma$ with $|E_{k- 1}^{- 1} E_k\setminus E_k|\le \eta' |E_k|$ for $k= 2, \cdots, l$, there exists $e\in E\in \mathfrak{F}_\Gamma$ such that for every good enough sofic approximation $\sigma: \Gamma\rightarrow Sym (d)$ for $\Gamma$ with some $d\in \mathbb{N}$ (i.e., $\sigma: \Gamma\rightarrow Sym (d)$ is a map such that
$$\sigma_{s t} (a)= \sigma_s \sigma_t (a), \sigma_s (a)\neq \sigma_{s'} (a), \sigma_e (a)= a$$
for all $a\in B$ with $B\subset \{1, \cdots, d\}$ satisfying $|B|\ge (1- \eta'') d$ and $s, t, s'\in E$ with $s\neq s'$), and any set $V\subset \{1, \cdots, d\}$ with $|V|\ge (1- \tau) d$, there exist $C_1, \cdots, C_l\subset V$ such that
\begin{enumerate}

\item the sets $\sigma (E_k) C_k, k\in \{1, \cdots, l\}$ are pairwise disjoint;

\item $\{\sigma (E_k) C_k: k\in \{1, \cdots, l\}\}$ $(1- \tau- \eta)$-covers $\{1, \cdots, d\}$;

\item $\{\sigma (E_k) c: c\in C_k\}$ is $\eta$-disjoint for each $k\in \{1, \cdots, l\}$; and

\item for every $k\in \{1, \cdots, l\}$ and $c\in C_k$, $E_k\ni s\mapsto \sigma_s (c)$ is bijective.
\end{enumerate}
\end{lemma}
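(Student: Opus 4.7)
My plan is to reduce both equivalences to the single master identity
$h(G,\mathcal{U}_1|\mathcal{U}_2) = h^a(G,\mathcal{U}_1|\mathcal{U}_2)$, to be proved for all $\mathcal{U}_1,\mathcal{U}_2 \in \mathcal{C}_X^o$. Granted this, taking the supremum over $\mathcal{U}_1\in\mathcal{C}_X^o$ yields $h(G,X|\mathcal{V}) = h^a(G,X|\mathcal{V})$ for every $\mathcal{V}\in\mathcal{C}_X^o$, and taking the infimum over $\mathcal{V}$ gives $h^*(G,X) = h^{a,*}(G,X)$. Since $h^a(G,X|\mathcal{V})$ and $h^{a,*}(G,X)$ are nonnegative by construction while $h$-expansiveness and asymptotic $h$-expansiveness only demand the sofic counterparts to be $\le 0$, both equivalences in the theorem then follow at once.

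For the upper bound $h(G,\mathcal{U}_1|\mathcal{U}_2) \le h^a(G,\mathcal{U}_1|\mathcal{U}_2)$, fix $\varepsilon>0$ and choose $E\in\mathfrak{F}_G$ so invariant that $m_{\mathcal{U}_1,\mathcal{U}_2}(E)/|E| \le h^a(G,\mathcal{U}_1|\mathcal{U}_2)+\varepsilon$. Apply Lemma \ref{1107110016} with $\tau=0$, $\eta$ small and $E_1=\cdots=E_l=E$ to a sufficiently late $\sigma_i \in \Sigma$ to obtain $C\subset\{1,\dots,d_i\}$ whose tiles $\sigma_i(E)c$, $c\in C$, are $\eta$-disjoint, $(1-\eta)$-cover $\{1,\dots,d_i\}$, and along which $E\ni s\mapsto\sigma_{i,s}(c)$ is injective. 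Fix any atom $W=W_1\times\cdots\times W_{d_i}\in\mathcal{U}_2^{d_i}$ and $x\in W\cap X^{d_i}_{F,\delta,\sigma_i}$; the defining inequality for $X^{d_i}_{F,\delta,\sigma_i}$ forces $sx_c \approx x_{\sigma_{i,s}(c)} \in W_{\sigma_{i,s}(c)}$ on all but a small fraction of pairs $(c,s)$, so after inflating $\mathcal{U}_2$ to a finite open cover $\mathcal{U}_2'$ with $m_{\mathcal{U}_1,\mathcal{U}_2'}(E) \le m_{\mathcal{U}_1,\mathcal{U}_2}(E)+\varepsilon|E|$, each $x_c$ lies in a single atom of $(\mathcal{U}_2')_E$, which is coverable by at most $\exp m_{\mathcal{U}_1,\mathcal{U}_2'}(E)$ members of $(\mathcal{U}_1)_E$. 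Multiplying over $c\in C$ (using $|C|\cdot|E| \le d_i/(1-\eta)$) and covering the untiled coordinates freely bounds $\tfrac{1}{d_i}\log N(\mathcal{U}_1^{d_i}, W\cap X^{d_i}_{F,\delta,\sigma_i})$ by $(1-\eta)^{-1}(h^a(G,\mathcal{U}_1|\mathcal{U}_2)+2\varepsilon)+\eta\log|\mathcal{U}_1|+o(1)$, and pushing $\eta,\varepsilon,\delta\to 0$ gives the inequality.

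For the reverse $h^a(G,\mathcal{U}_1|\mathcal{U}_2) \le h(G,\mathcal{U}_1|\mathcal{U}_2)$, fix a Følner $F_n\in\mathfrak{F}_G$ and a maximiser $K_0=\bigcap_{g\in F_n}g^{-1}V_g\in(\mathcal{U}_2)_{F_n}$ of $\log N((\mathcal{U}_1)_{F_n},K_0)=m_{\mathcal{U}_1,\mathcal{U}_2}(F_n)$. Another application of Lemma \ref{1107110016} with $E_k=F_n$ produces tiles $\sigma_i(F_n)c$, $c\in C$; I then define one atom $W\in\mathcal{U}_2^{d_i}$ by placing $V_s$ at coordinate $\sigma_{i,s}(c)$ on every tile and any fixed member of $\mathcal{U}_2$ on the untiled coordinates. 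For each $(y_c)_{c\in C}\in K_0^{C}$ the tuple $x^{(y)}$ with $x^{(y)}_{\sigma_{i,s}(c)}:=sy_c$ (and arbitrary elsewhere) lies in $W\cap X^{d_i}_{F,\delta,\sigma_i}$ for suitable $F,\delta$, and any single $\mathcal{U}_1^{d_i}$-atom containing two such tuples $x^{(y)},x^{(y')}$ must restrict on every tile to an element of $(\mathcal{U}_1)_{F_n}$ containing both $y_c$ and $y'_c$. Hence $N(\mathcal{U}_1^{d_i},W\cap X^{d_i}_{F,\delta,\sigma_i})\ge N((\mathcal{U}_1)_{F_n},K_0)^{|C|}$, and combining with $|C|\cdot|F_n|\ge(1-\eta)d_i$, then letting $\eta\to 0$ and $F_n$ range over a Følner sequence, yields the bound.

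The principal obstacle is the simultaneous calibration of $\delta$, the sofic quality of $\sigma_i$, the Rokhlin parameters $\eta,\tau$, the Følner invariance of $E$ and $F_n$, and the open inflation $\mathcal{U}_2\leadsto\mathcal{U}_2'$, so that (i) $\mathcal{U}_2'$ still refines $\mathcal{U}_2$ without substantially increasing $m_{\mathcal{U}_1,\mathcal{U}_2}(E)$, and (ii) the exceptional set of coordinates on which $x_{\sigma_{i,s}(c)}\not\approx sx_c$ contributes only $o(d_i)$. These are standard book-keeping steps of Kerr--Li and L.~Bowen type identifications of sofic with amenable entropy; the novelty in the conditional setting is the need for uniformity in the atom $W\in\mathcal{U}_2^{d_i}$, which is precisely the data furnished by the Rokhlin-type tiling of Lemma \ref{1107110016}.
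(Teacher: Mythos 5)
Your proposal does not prove the statement it is supposed to prove. The statement under review is Lemma \ref{1107110016}, the Rokhlin Lemma for sofic approximations: a purely combinatorial quasi-tiling assertion about a map $\sigma:\Gamma\to{\rm Sym}(d)$ and finite subsets $E_1\subset\cdots\subset E_l$ of $\Gamma$, with no compact space, no cover, and no entropy appearing anywhere in it. What you have written instead is a sketch of the comparison theorem between the sofic and amenable conditional entropies (essentially Theorem \ref{1306160126} and Theorem \ref{1409221729} of the paper, via Propositions \ref{1306160111} and \ref{1306160116}). Worse, your argument explicitly invokes Lemma \ref{1107110016} twice ("Apply Lemma \ref{1107110016} with $\tau=0$\dots", "Another application of Lemma \ref{1107110016} with $E_k=F_n$\dots"), so read as a proof of that lemma it is circular: you are assuming the very statement you were asked to establish.

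For the record, the paper does not prove this lemma either; it quotes it verbatim from Kerr and Li \cite[Lemma 4.5]{KLAJM}. An actual proof runs along Ornstein--Weiss quasi-tiling lines transplanted to the sofic setting: one chooses $l$ and $\eta'$ so that the nested sets $E_1\subset\cdots\subset E_l$ form an approximate tiling system, takes $E$ to be a suitable product set built from the $E_k$ so that a good enough sofic approximation on $E$ forces most translates $\sigma(E_k)c$ to have exactly $|E_k|$ elements and to behave like genuine copies of $E_k$ (this is where $\eta''$ and item (4) come from), and then selects the centre sets $C_l, C_{l-1},\ldots, C_1\subset V$ by a maximality/greedy argument at each level so as to secure the $\eta$-disjointness in (3), the pairwise disjointness in (1), and the $(1-\tau-\eta)$-covering in (2). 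None of this machinery appears in your proposal. Your sketch of the entropy comparison is, as a separate matter, broadly in line with how the paper derives Propositions \ref{1306160111} and \ref{1306160116} from this lemma, but that is not the task at hand.
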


Before proceeding, we also need the following easy observation.

Recall that $\rho$ is a compatible metric on the compact metric space $X$.

\begin{lemma} \label{1107122331}
Let $K\subset X$ be a closed subset and $F\in \mathfrak{F}_G, \mathcal{U}\in \mathcal{C}_X^o$. Then there exists $\delta> 0$ such that
$$K_{F, \delta}= \left\{(x_s)_{s\in F}\in X^F: \max_{s\in F} \rho (x_s, s x)< \delta\ \text{for some}\ x\in K\right\}$$
can be covered by at most $N (\mathcal{U}_F, K)$ elements of $\mathcal{U}^F$.
\end{lemma}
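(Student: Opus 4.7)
The plan is to extract, from a minimum subcover of $K$ by elements of $\mathcal{U}_F$, an explicit family of at most $N(\mathcal{U}_F, K)$ products in $\mathcal{U}^F$ that covers the thickening $K_{F,\delta}$ provided $\delta$ is small enough. The guiding idea is that each member of $\mathcal{U}_F$ has the shape $\bigcap_{s\in F} s^{-1} U_s$ with $U_s \in \mathcal{U}$, so for a point $x$ lying in such an intersection the orbit piece $(sx)_{s\in F}$ already sits inside the product $\prod_{s\in F} U_s \in \mathcal{U}^F$; a Lebesgue-type argument will then push small perturbations $(x_s)_{s\in F}$ into the same product.

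First I would fix a subfamily $\{A_1, \ldots, A_n\} \subset \mathcal{U}_F$ of cardinality $n = N(\mathcal{U}_F, K)$ covering $K$, and write each $A_i = \bigcap_{s\in F} s^{-1} U_{i,s}$ with $U_{i,s} \in \mathcal{U}$. Using normality of $X$ (the shrinking lemma applied to the open cover $\{A_1, \ldots, A_n, X \setminus K\}$ of $X$), I would next produce closed sets $C_i \subset A_i$ with $K \subset \bigcup_{i=1}^n C_i$. For each pair $(i, s)$, the set $s(C_i \cap K)$ is compact and contained in the open set $U_{i,s}$, so there exists $\delta_{i,s} > 0$ whose entire $\delta_{i,s}$-neighborhood still lies in $U_{i,s}$. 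Setting $\delta = \min_{i,s} \delta_{i,s} > 0$, any tuple $(x_s)_{s\in F} \in K_{F,\delta}$ with witness $x \in K$ admits some index $i$ with $x \in C_i$; then $sx \in s(C_i \cap K)$ and $\rho(x_s, sx) < \delta \le \delta_{i,s}$ force $x_s \in U_{i,s}$ for every $s \in F$. Hence $(x_s)_{s\in F} \in \prod_{s\in F} U_{i,s}$, and the family $\{\prod_{s\in F} U_{i,s} : 1 \le i \le n\} \subset \mathcal{U}^F$ covers $K_{F,\delta}$ with at most $n = N(\mathcal{U}_F, K)$ members.

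The only step with mild subtlety is the shrinking from $\{A_i\}$ to $\{C_i\}$, which is needed because the $A_i$'s are open and $K$ need not lie in the interior of any single $A_i$; one handles this with the standard shrinking lemma (or, equivalently, by compactness combined with a partition-of-unity-style refinement). Everything else is a routine consequence of the compactness of $s(C_i \cap K)$ together with the very definition of the join $\mathcal{U}_F$, and I do not anticipate any further obstruction.
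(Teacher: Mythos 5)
Your proposal is correct and follows essentially the same route as the paper: both proofs take a minimal subcover $\{A_i\}\subset\mathcal{U}_F$ of $K$, write $A_i=\bigcap_{s\in F}s^{-1}U_{i,s}$, and use the associated products $\prod_{s\in F}U_{i,s}\in\mathcal{U}^F$ as the covering family. The only difference is in producing $\delta$: the paper observes directly that the compact set $K_F=\{(sx)_{s\in F}:x\in K\}$ lies in the open union of these products in $X^F$ and takes $\delta$ from that single compactness statement, which lets one avoid your shrinking-lemma step and the pointwise assignment of each witness $x$ to a specific index $i$.
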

\begin{proof}
Obviously, there exists $\mathcal{V}\subset \mathcal{U}^F$ such that $|\mathcal{V}|\le N (\mathcal{U}_F, K)$ and
\begin{equation*} \label{1107122343}
\cup \mathcal{V}\supset K_F\ \text{where}\ K_F= \{(s x)_{s\in F}: x\in K\}.
\end{equation*}
For example, let $\mathcal{W}\subset \mathcal{U}_F$ such that $|\mathcal{W}|= N (\mathcal{U}_F, K)$ and $\cup \mathcal{W}\supset K$. Now for each $W\in \mathcal{W}$, as $W\in \mathcal{U}_F$, say $W= \bigcap\limits_{s\in F} s^{- 1} U (s)$ with $U (s)\in \mathcal{U}$ for each $s\in F$, we set $\widehat{W}= \prod\limits_{s\in F} U (s)\in \mathcal{U}^F$. Then we can take $\mathcal{V}$ to be $\{\widehat{W}: W\in \mathcal{W}\}$.

Note that $\cup \mathcal{V}$ is an open subset of $X^F$ and $K_F\subset X^F$ is a closed subset. So there exists $\delta> 0$ such that $K_{F, \delta}\subset \cup \mathcal{V}$. This finishes the proof.
\end{proof}

Then, following the ideas of \cite[Lemma 5.1]{KLAJM} we have:

\begin{proposition} \label{1306160111}
Let $\mathcal{U}, \mathcal{U}_2\in \mathcal{C}_X^o$ and $\mathcal{U}_1\in \mathcal{C}_X^c$ with $\mathcal{U}_1\succeq \mathcal{U}_2$. Then
$$h (G, \mathcal{U}| \mathcal{U}_1)\le h^a (G, \mathcal{U}| \overline{\mathcal{U}_2})\ \text{and thus}\ h (G, X| \mathcal{U}_1)\le h^a (G, X| \overline{\mathcal{U}_2}).$$
\end{proposition}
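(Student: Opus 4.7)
The plan is to follow the template of \cite[Lemma 5.1]{KLAJM} adapted to this conditional setting. Since $h(G, X|\mathcal{U}_1) = \sup_{\mathcal{U} \in \mathcal{C}_X^o} h(G, \mathcal{U}|\mathcal{U}_1)$ and similarly for $h^a(G, X|\overline{\mathcal{U}_2})$, the ``thus'' clause follows by taking suprema once the first inequality is known, so my task reduces to proving $h(G, \mathcal{U}|\mathcal{U}_1) \le h^a(G, \mathcal{U}|\overline{\mathcal{U}_2})$.

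Fix $\varepsilon > 0$ together with small $\tau, \eta > 0$, and set up the parameters in the right order. First, I pick $\delta_0 > 0$ small enough that the closed $\delta_0$-neighbourhood of every $U_1 \in \mathcal{U}_1$ is contained in some $U_2(U_1) \in \mathcal{U}_2$ with $U_1 \subset U_2(U_1)$; this is possible since $\mathcal{U}_1$ is finite, each of its (closed) elements is compact, and $\mathcal{U}_2$ is open with $\mathcal{U}_1 \succeq \mathcal{U}_2$. Using Proposition \ref{ow-prop-convergence} I then choose a rapidly nested $e_G \in E_1 \subset \cdots \subset E_l$ in $\mathfrak{F}_G$ which is F\o lner enough to feed Lemma \ref{1107110016} and which satisfies $m_{\mathcal{U}, \overline{\mathcal{U}_2}}(E_k) \le (h^a(G, \mathcal{U}|\overline{\mathcal{U}_2}) + \varepsilon)|E_k|$ for every $k$. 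Lemma \ref{1107110016} then yields a finite $E \subset G$ and, for every sufficiently good sofic approximation $\sigma_i$, subsets $C_1^i, \ldots, C_l^i \subset \{1, \ldots, d_i\}$ whose tiles $\sigma_i(E_k) C_k^i$ are pairwise disjoint, $(1 - \tau - \eta)$-cover $\{1, \ldots, d_i\}$, and satisfy $E_k \ni s \mapsto \sigma_i(s)(c)$ bijective for each $c \in C_k^i$. Finally I pick $F \in \mathfrak{F}_G$ containing $E \cup \bigcup_k E_k$, take $\delta' > 0$ from Lemma \ref{1107122331} (a single scale valid for all finitely many possible $K \in (\overline{\mathcal{U}_2})_{E_l}$), and choose $\delta > 0$ so small that $|F|(\delta/\min(\delta_0, \delta'))^2$ is negligible.

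Now fix $V = \prod_j U_1(j) \in \mathcal{U}_1^{d_i}$ and $(x_j)_j \in X^{d_i}_{F, \delta, \sigma_i} \cap V$. A Chebyshev estimate applied to the defining $\ell^2$ inequality for $X^{d_i}_{F, \delta, \sigma_i}$ shows that outside an exceptional set of coordinates of density at most $|F|(\delta/\min(\delta_0, \delta'))^2$ one has $\rho(s x_j, x_{\sigma_i(s)(j)}) < \min(\delta_0, \delta')$ for every $s \in F$; call such $j$ \emph{good}. For a good $c \in C_k^i$ the choice of $\delta_0$ forces $s x_c \in U_2(\sigma_i(s)(c))$ for every $s \in E_k$, so the non-empty set $K_c := \bigcap_{s \in E_k} s^{-1} \overline{U_2(\sigma_i(s)(c))}$ belongs to $(\overline{\mathcal{U}_2})_{E_k}$ and contains $x_c$. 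Lemma \ref{1107122331} then bounds the number of $\mathcal{U}^{E_k}$-elements needed to catch the tuple $(x_{\sigma_i(s)(c)})_{s \in E_k}$ by $N(\mathcal{U}_{E_k}, K_c) \le \exp(m_{\mathcal{U}, \overline{\mathcal{U}_2}}(E_k))$. Coordinates sitting either outside every tile or inside a tile with bad centre (total density at most $\beta := \tau + \eta + \max_k |E_k| \cdot |F|(\delta/\min(\delta_0, \delta'))^2$) are handled by the trivial bound $|\mathcal{U}|$ per coordinate.

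Multiplying the bounds across tiles and using $\sum_k |C_k^i| |E_k| \le d_i$ yields
$$\log N(\mathcal{U}^{d_i}, X^{d_i}_{F, \delta, \sigma_i} \cap V) \le (h^a(G, \mathcal{U}|\overline{\mathcal{U}_2}) + \varepsilon) d_i + \beta d_i \log |\mathcal{U}|,$$
uniformly in $V \in \mathcal{U}_1^{d_i}$. Dividing by $d_i$, taking $\limsup_{i \to \infty}$ and then letting $\tau, \eta, \delta, \varepsilon \to 0$ delivers $h(G, \mathcal{U}|\mathcal{U}_1) \le h^a(G, \mathcal{U}|\overline{\mathcal{U}_2})$. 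The main obstacle is the careful sequencing of parameter choices: $\delta_0$ must come from the $\mathcal{U}_1$--$\mathcal{U}_2$ comparison (the hypothesis $\mathcal{U}_1 \in \mathcal{C}_X^c$ is essential here, since compactness of the closed $U_1$ is what gives a positive distance to $X \setminus U_2(U_1)$), the $E_k$ must be chosen both to realise $h^a(G, \mathcal{U}|\overline{\mathcal{U}_2})$ and to satisfy Rokhlin nesting, $F$ must absorb $E$ and every $E_k$, and $\delta$ only at the very end, small relative to everything previously chosen.
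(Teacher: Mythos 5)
Your proposal follows essentially the same route as the paper's proof: the same reduction of the ``thus'' clause to the single-cover inequality by taking suprema, the same choice of a scale $\delta_0$ (the paper's $\delta''$) exploiting $\mathcal{U}_1\in\mathcal{C}_X^c$ and $\mathcal{U}_1\succeq\mathcal{U}_2$, the same F\o lner tiles $E_1\subset\cdots\subset E_l$ chosen both to satisfy the Rokhlin nesting of Lemma \ref{1107110016} and to realise $h^a (G, \mathcal{U}| \overline{\mathcal{U}_2})$ up to $\varepsilon$, the same application of Lemma \ref{1107122331} to the sets $K_c\in(\overline{\mathcal{U}_2})_{E_k}$ built from the coordinates of $V$, and the same Chebyshev control of the exceptional coordinates. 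Two bookkeeping points are glossed over, one of which is a genuine gap as written.

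First (minor): the tiles $\sigma_i(E_k)c$, $c\in C_k^i$, supplied by Lemma \ref{1107110016} are only $\eta$-disjoint, so one gets $\sum_k |C_k^i|\,|E_k|\le d_i/(1-\eta)$ rather than $\le d_i$; the factor $1/(1-\eta)$ is harmless but must be carried through, as in the paper's \eqref{1107130110} and \eqref{1107130122}. Second, and more substantively: which coordinates are ``good'' depends on the point $(x_j)_j$ and not only on $V$, so the covering you describe (fine covers on tiles with good centres, the trivial bound elsewhere) covers only the subset of $X^{d_i}_{F,\delta,\sigma_i}\cap V$ whose good-set is the one you implicitly fixed. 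To bound $N(\mathcal{U}^{d_i}, X^{d_i}_{F,\delta,\sigma_i}\cap V)$ one must first partition this set according to the exceptional set $\theta$ of coordinates and then sum the resulting coverings over all admissible $\theta$, which costs an extra multiplicative factor of at most $\sum_{j\le \beta' d_i}\binom{d_i}{j}$ possible choices of $\theta$; this is exactly the paper's $\Theta$-decomposition, controlled by arranging this sum to be $<(1+\varepsilon)^{d_i}$. As stated, your displayed bound omits the corresponding $d_i\log(1+\varepsilon)$ term and is not actually ``uniform in $V$'' in the sense required. Since that term vanishes as $\varepsilon\to 0$, inserting the partition repairs the argument and the conclusion stands.
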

\begin{proof}
Let $\varepsilon> 0$. We choose $1> \eta> 0$ small enough such that
\begin{equation} \label{1107130122}
\frac{h^a (G, \mathcal{U}| \overline{\mathcal{U}_2})+ \varepsilon}{1- \eta}+ 2 \eta \log |\mathcal{U}|\le h^a (G, \mathcal{U}| \overline{\mathcal{U}_2})+ 2 \varepsilon
\end{equation}
and $K\in \mathfrak{F}_G, \delta'> 0$ such that, once $F\in \mathfrak{F}_G$ satisfies $|K F\Delta F|\le \delta' |F|$ then
\begin{equation} \label{6.star}
\frac{1}{|F|} \max_{K\in (\overline{\mathcal{U}_2})_F} \log N (\mathcal{U}_F, K)\le h^a (G, \mathcal{U}| \overline{\mathcal{U}_2})+ \varepsilon.
\end{equation}

Observing that $\mathcal{U}_2\in \mathcal{C}_X^o$ and $\mathcal{U}_1\in \mathcal{C}_X^c$ satisfy $\mathcal{U}_1\succeq \mathcal{U}_2$, we can choose $\delta''> 0$ such that, for each $U_1\in \mathcal{U}_1$ there exists $U_2\in \mathcal{U}_2$ containing the open $\delta''$-neighborhood of $U_1$. Now let $l\in \mathbb{N}$ and $\eta'> 0$ be as given by Lemma \ref{1107110016} with respect to $\tau= \eta$ and $\eta$. In $\mathfrak{F}_G$ we take $e_G\in F_1\subset \cdots\subset F_l$ such that $|F_{k- 1}^{- 1} F_k\setminus F_k|\le \eta' |F_k|$ for each $k= 2, \cdots, l$ and $|K F_k\Delta F_k|\le \delta' |F_k|$ for all $k= 1, \cdots, l$. As the group $G$ is amenable, such subsets $F_1, \cdots, F_l$ must exist. Thus, by \eqref{6.star},
\begin{equation} \label{1107122326}
\max_{k= 1}^l \frac{1}{|F_k|} \max_{K\in (\overline{\mathcal{U}_2})_{F_k}} \log N (\mathcal{U}_{F_k}, K)\le h^a (G, \mathcal{U}| \overline{\mathcal{U}_2})+ \varepsilon.
\end{equation}
For each $k= 1, \cdots, l$ and any $K\in (\overline{\mathcal{U}_2})_{F_k}$, let $\delta (k, K)> 0$ be as given by Lemma \ref{1107122331} with respect to $K, F_k$ and $\mathcal{U}$, and then set $\delta_k= \min \{\delta (k, K): K\in (\overline{\mathcal{U}_2})_{F_k}\}$.
 We take $\delta> 0$ such that $\delta\le \min\left\{(\delta'')^2, \delta_1^2, \cdots, \delta_l^2, \frac{\eta}{|F_l|}\right\}$ and
\begin{equation} \label{1107130020}
\sum_{j= 0}^{[|F_l| \delta d]} \binom{d}{j}< (1+ \varepsilon)^d\ \text{for all large enough}\ d\in \mathbb{N}.
\end{equation}

Now let $\sigma: G\rightarrow Sym (d)$ be a good enough sofic approximation for $G$ with some $d\in \mathbb{N}$ (and hence $d\in \mathbb{N}$ is large enough). If $(x_1, \cdots, x_d)\in X^d_{F_l, \delta, \sigma}$ then
$$\max_{s\in F_l} \sqrt{\sum_{i= 1}^d \frac{1}{d} \rho^2 (s x_i, x_{\sigma_s (i)})}< \delta,$$
which implies that $|J (x_1, \cdots, x_d, F_l)|\ge (1- |F_l| \delta) d$, where
$$J (x_1, \cdots, x_d, F_l)= \left\{i\in \{1, \cdots, d\}: \max_{s\in F_l} \rho (s x_i, x_{\sigma_s (i)})< \sqrt{\delta}\right\}.$$
Denote by $\Theta$ the set of all subsets of $\{1, \cdots, d\}$ with at least $(1- |F_l| \delta) d$ many elements, and for each $\theta\in \Theta$ denote by $X^d_{F_l, \delta, \sigma, \theta}$ the set of all $(x_1, \cdots, x_d)\in X^d_{F_l, \delta, \sigma}$ with $J (x_1, \cdots, x_d, F_l)= \theta$. Then
\begin{equation} \label{1107130014}
|\Theta|= \sum_{j= 0}^{[|F_l| \delta d]} \binom{d}{j}< (1+ \varepsilon)^d\ (\text{using \eqref{1107130020}})\ \text{and}\ \bigcup_{\theta\in \Theta} X^d_{F_l, \delta, \sigma, \theta}= X^d_{F_l, \delta, \sigma}.
\end{equation}

Let $\theta\in \Theta$. As $\sigma$ is good enough, by Lemma \ref{1107110016} there exist $C_1, \cdots, C_l\subset \theta$ with
\begin{enumerate}

\item the sets $\sigma (F_k) C_k, k\in \{1, \cdots, l\}$ are pairwise disjoint;

\item $\{\sigma (F_k) c: c\in C_k\}$ is $\eta$-disjoint for each $k= 1, \cdots, l$;

\item $\{\sigma (F_k) C_k: k\in \{1, \cdots, l\}\}$ $(1- 2 \eta)$-covers $\{1, \cdots, d\}$; and

\item for every $k\in \{1, \cdots, l\}$ and $c\in C_k$, $F_k\ni s\mapsto \sigma_s (c)$ is bijective.
\end{enumerate}
Set $J_\theta= \{1, \cdots, d\}\setminus \cup \{\sigma (F_k) C_k: k\in \{1, \cdots, l\}\}$. Then
\begin{equation} \label{1107130110}
|J_\theta|\le 2 \eta d\ \text{and}\ \sum_{k= 1}^l |F_k|\cdot |C_k|\le \frac{1}{1- \eta} \sum_{k= 1}^l |\sigma (F_k) C_k|\le \frac{d}{1- \eta}.
\end{equation}
Now let $W\in (\mathcal{U}_1)^d$, say $W= \prod_{i= 1}^d U_1^{(i)}$ and for each $i= 1, \cdots, d$ the open $\delta''$-neighborhood of $U_1^{(i)}$ is contained in $U_2^{(i)}\in \mathcal{U}_2$. For each $k= 1, \cdots, l$ and any $c_k\in C_k$, as $C_k\subset \theta$ and $F_k\subset F_l$, if $(x_1, \cdots, x_d)\in X^d_{F_l, \delta, \sigma, \theta}\cap W$, then
$$\max_{s\in F_k} \rho (x_{\sigma_s (c_k)}, s x_{c_k})< \sqrt{\delta}\le \delta'',$$
and so $s x_{c_k}\in U_2^{(\sigma_s (c_k))}$ (as $x_{\sigma_s (c_k)}\in U_1^{(\sigma_s (c_k))}$) for all $s\in F_k$ by the selection of $\delta''$, thus
$$x_{c_k}\in \bigcap_{s\in F_k} s^{- 1} \overline{U_2^{(\sigma_s (c_k))}}\ (\text{denoted by}\ Q)\in (\overline{\mathcal{U}_2})_{F_k},$$
which implies by applying Lemma \ref{1107122331} and \eqref{1107122326} that we can cover
\begin{eqnarray*}
& & \{(x_i)_{i\in \sigma (F_k) c_k}: (x_1, \cdots, x_d)\in X^d_{F_l, \delta, \sigma, \theta}\cap W\} \\
&\subset & \left\{(x_i)_{i\in \sigma (F_k) c_k}: \max_{s\in F_k} \rho (x_{\sigma_s (c_k)}, s x)< \delta_k\ \text{for some}\ x\in Q\right\}
\end{eqnarray*}
by at most (observing the selection of $\delta_k$)
$$N (\mathcal{U}_{F_k}, Q)\le e^{|F_k|\cdot [h^a (G, \mathcal{U}| \overline{\mathcal{U}_2})+ \varepsilon]}$$
 elements of $\mathcal{U}^{\sigma (F_k) c_k}$, and so it is not hard to cover
$$\{(x_i)_{i\in \sigma (F_k) C_k}: (x_1, \cdots, x_d)\in X^d_{F_l, \delta, \sigma, \theta}\cap W\}$$
using at most
$$e^{|C_k|\cdot |F_k|\cdot [h^a (G, \mathcal{U}| \overline{\mathcal{U}_2})+ \varepsilon]}$$
 elements of $\mathcal{U}^{\sigma (F_k) C_k}$. Thus
\begin{eqnarray} \label{1107131006}
\log N (\mathcal{U}^d, X^d_{F_l, \delta, \sigma, \theta}\cap W)&\le & \sum_{k= 1}^l |C_k|\cdot |F_k|\cdot [h^a (G, \mathcal{U}| \overline{\mathcal{U}_2})+ \varepsilon]+ |J_\theta| \log |\mathcal{U}|\nonumber \\
&\le & d \left(\frac{h^a (G, \mathcal{U}| \overline{\mathcal{U}_2})+ \varepsilon}{1- \eta}+ 2 \eta \log |\mathcal{U}|\right)\ (\text{using \eqref{1107130110}}) \nonumber \\
&\le & d (h^a (G, \mathcal{U}| \overline{\mathcal{U}_2})+ 2 \varepsilon)\ (\text{using \eqref{1107130122}}).
\end{eqnarray}

Combining \eqref{1107130014} with \eqref{1107131006} we obtain
$$\log N (\mathcal{U}^d, X^d_{F_l, \delta, \sigma}\cap W)\le d (h^a (G, \mathcal{U}| \overline{\mathcal{U}_2})+ 2 \varepsilon+ \log (1+ \varepsilon)).$$
By the arbitrariness of $\varepsilon> 0$ and $W\in (\mathcal{U}_1)^d$ we obtain the conclusion.
\end{proof}

We also have \cite[Lemma 4.6]{KLAJM}, an improved version of Lemma \ref{1107110016} for an amenable group. Recall that the group $G$ is amenable throughout the whole section.

\begin{lemma} \label{1107112242}
Let $0\le \tau< 1, 0< \eta< 1$ and $K\in \mathfrak{F}_G, \delta> 0$. Then there are an $l\in \mathbb{N}$ and $F_1, \cdots, F_l\in \mathfrak{F}_G$ with $|K F_k\setminus F_k|< \delta |F_k|$ and $|F_k K\setminus F_k|< \delta |F_k|$ for all $k= 1, \cdots, l$, such that for every good enough sofic approximation $\sigma: G\rightarrow Sym (d)$ for $G$ with some $d\in \mathbb{N}$ and any set $V\subset \{1, \cdots, d\}$ with $|V|\ge (1- \tau) d$, there exist $C_1, \cdots, C_l\subset V$ such that
\begin{enumerate}

\item the sets $\sigma (F_k) C_k, k\in \{1, \cdots, l\}$ are pairwise disjoint;

\item $\{\sigma (F_k) C_k: k\in \{1, \cdots, l\}\}$ $(1- \tau- \eta)$-covers $\{1, \cdots, d\}$; and

\item for every $k\in \{1, \cdots, l\}$, the map $F_k\times C_k\ni (s, c)\mapsto \sigma_s (c)$ is bijective.
\end{enumerate}
\end{lemma}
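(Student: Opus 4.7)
The plan is to deduce Lemma \ref{1107112242} from the Ornstein--Weiss quasi-tiling theorem for amenable groups combined with a transfer argument to the sofic approximation. First, I would apply the quasi-tiling theorem to obtain, given $\eta> 0$ and $K\in \mathfrak{F}_G, \delta> 0$, finite subsets $F_1, \ldots, F_l\subset G$ of any prescribed invariance (in particular satisfying $|K F_k\setminus F_k|< \delta|F_k|$ and $|F_k K\setminus F_k|< \delta|F_k|$) and with the additional structural property that any sufficiently $(K', \delta')$-invariant finite subset of $G$ can be covered up to density $1- \eta/2$ by right-translates $F_k c$, where within each level $k$ the translates are pairwise disjoint. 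This is the standard $\varepsilon$-quasi-tiling conclusion; the strengthening from $\varepsilon$-disjointness to exact disjointness within a level (a nesting of the $F_k$ is not required now) is obtained by thinning the sets of centers produced by the theorem.

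Next, given a good enough sofic approximation $\sigma: G\rightarrow Sym(d)$ and $V\subset \{1, \ldots, d\}$ with $|V|\geq (1- \tau) d$, I would restrict attention to the ``regular'' subset $V^*\subset V$ consisting of those $a$ for which the map $s\mapsto \sigma_s(a)$ is injective on a large enough finite subset of $G$ containing every $F_k$, and for which $\sigma_{s t}(a)= \sigma_s \sigma_t(a)$ for $s, t$ in a similarly large set. The defining conditions of a sofic approximation, together with $|V|\geq (1-\tau)d$, guarantee that $|V\setminus V^*|\leq (\eta/4) d$ once $\sigma$ is good enough, so $|V^*|\geq (1- \tau- \eta/4) d$. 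On $V^*$ the approximation $\sigma$ genuinely mirrors the left action of $G$ on itself, in the sense that for each $a\in V^*$ the set $\sigma(F_k) a$ is a bijective copy of $F_k$ via $s\mapsto \sigma_s(a)$.

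I would then construct $C_l, C_{l- 1}, \ldots, C_1$ greedily in decreasing order of $k$. At step $k$, after $C_{k+ 1}, \ldots, C_l$ have been chosen, I pick $C_k\subset V^*\setminus \bigcup_{j> k} \sigma(F_j) C_j$ maximal subject to the property that the sets $\{\sigma(F_k) c: c\in C_k\}$ are pairwise disjoint and each disjoint from $\bigcup_{j> k} \sigma(F_j) C_j$. Conditions (1) and (3) of the lemma then hold by construction: the cross-level disjointness is imposed by the greedy order, while the bijectivity of $F_k\times C_k\ni (s, c)\mapsto \sigma_s(c)$ follows from $C_k\subset V^*$ (injectivity at each center) together with the within-level disjointness just enforced.

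The main obstacle will be verifying condition (2), that $\bigcup_k \sigma(F_k) C_k$ is a $(1- \tau- \eta)$-cover of $\{1, \ldots, d\}$. Here I would pull the would-be-uncovered part back through $\sigma$ using approximate multiplicativity on $V^*$ and apply the Ornstein--Weiss quasi-tiling property: if the uncovered portion of $V^*$ had density greater than $3\eta/4$, then one could transfer the quasi-tiling in $G$ across $\sigma$ to produce a new center $c^*\in V^*$ and a level $k$ for which $\sigma(F_k) c^*$ is disjoint from everything already chosen, contradicting maximality of $C_k$. The delicate accounting is that the pull-back only approximately respects the group structure, so the Ornstein--Weiss invariance parameters $(K', \delta')$ and the sofic approximation quality must be tuned in the correct order relative to $\eta$ and $\tau$ to absorb the small densities of bad behavior arising at each layer.
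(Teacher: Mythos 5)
First, a point of comparison: the paper does not prove this lemma at all --- it is quoted directly from \cite[Lemma 4.6]{KLAJM} --- so there is no in-paper argument to measure you against, only the known proof in the literature. Your overall strategy (Ornstein--Weiss quasitiling in $G$ transferred to the sofic approximation) is the right circle of ideas, but two of your steps do not hold up. The smaller issue is the claim that exact disjointness within a level ``is obtained by thinning the sets of centers'': if the translates $F_k c$ are merely $\varepsilon$-disjoint, passing to a maximal exactly disjoint subfamily can halve the covered measure. Already in $\mathbb{Z}$, translates of $[0,M)$ placed at $0,\,M-1,\,2(M-1),\ldots$ are $\varepsilon$-disjoint with $\varepsilon=2/M$, yet any exactly disjoint subfamily covers only about half of their union. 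The exactly disjoint quasitiling theorem is true, but one obtains it by replacing each tile by its core and enlarging the finite list of shapes, not by discarding centers.

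The serious gap is the verification of condition (2) for your greedy construction. You take each $C_k$ maximal subject to \emph{exact} disjointness and then argue that a large uncovered set would produce a new admissible center by ``transferring the quasi-tiling in $G$ across $\sigma$.'' But the Ornstein--Weiss theorem applies only to sufficiently invariant subsets of $G$, and the pullback $\{s\in G:\sigma_s(a)\ \text{uncovered}\}$ of the uncovered set through a base point $a$ has no invariance whatsoever, so there is nothing to transfer. The direct estimate for a maximal exactly disjoint family of translates of $F$ inside a set gives a covered fraction of order $|F|/|F^{-1}F|$, which is not bounded below for general F\o lner sets in a general amenable group (this is exactly why Tempelman-type conditions are a nontrivial restriction), so maximality alone cannot force $(1-\tau-\eta)$-covering. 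This is why the actual proof runs in two stages: first apply the $\eta$-disjoint sofic Rokhlin lemma --- the paper's Lemma \ref{1107110016}, whose maximality-plus-double-counting argument genuinely needs $\eta$-disjointness (the lower bound $|\sigma(E_k)a\cap(\text{used set})|>\eta|E_k|$, rather than $\ge 1$, is what makes the count close) --- and then, inside each tile $\sigma(E_k)c$, pass to the core $\Lambda_{k,c}\subset E_k$ of size at least $(1-\eta)|E_k|$ owned exclusively by $c$. That core \emph{is} a genuine subset of $G$, and it is sufficiently invariant because it is a large subset of a very invariant set; the group-level exactly disjoint quasitiling applies to it, and the resulting tiles are pushed forward through the bijection $s\mapsto\sigma_s(c)$. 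Your proposal omits this intermediate $\eta$-disjoint stage, and without it the covering estimate does not go through.
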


Let $\mathcal{U}\in \mathcal{C}_X, \varepsilon> 0$ and $F\in \mathfrak{F}_G, \delta> 0$. We set
$$h_{F, \delta} (G, \varepsilon| \mathcal{U})= \limsup_{i\rightarrow \infty} \frac{1}{d_i} \log \max_{V\in \mathcal{U}^{d_i}} N_\varepsilon \left(X^{d_i}_{F, \delta, \sigma_i}\cap V, \rho_{d_i}\right)\ (\text{recalling \eqref{metric}}),$$
$$h (G, \varepsilon| \mathcal{U})= \inf_{F\in \mathfrak{F}_G} \inf_{\delta> 0} h_{F, \delta} (G, \varepsilon| \mathcal{U}).$$
Now let $\mathcal{V}\in \mathcal{C}_X$ and $\varepsilon_1, \varepsilon_2> 0$. Assume that $\text{diam} (\mathcal{V})< \varepsilon_1$ and any open ball with radius $\varepsilon_2$ is contained in some element of $\mathcal{V}$. It is easy to obtain
$$h_{F, \delta} (G, \varepsilon_1| \mathcal{U})\le h_{F, \delta} (G, \mathcal{V}| \mathcal{U})\le h_{F, \delta} (G, \varepsilon_2| \mathcal{U}),$$
$$h (G, \varepsilon_1| \mathcal{U})\le h (G, \mathcal{V}| \mathcal{U})\le h (G, \varepsilon_2| \mathcal{U}).$$
Thus we have
\begin{equation} \label{1306210043}
h (G, X| \mathcal{U})= \lim_{\varepsilon\rightarrow 0} h (G, \varepsilon| \mathcal{U})= \sup_{\varepsilon> 0} h (G, \varepsilon| \mathcal{U}).
\end{equation}

Following the ideas of \cite[Lemma 5.2]{KLAJM} we have:

\begin{proposition} \label{1306160116}
Let $\mathcal{U}\in \mathcal{C}_X$. Then $h (G, X| \mathcal{U})\ge h^a (G, X| \mathcal{U})$.
\end{proposition}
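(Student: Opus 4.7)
The plan is to produce, inside one element of $\mathcal{U}^{d_i}$ and inside $X^{d_i}_{F_0,\delta_0,\sigma_i}$, a large $(\rho_{d_i},\varepsilon)$-separated family of tuples built by tiling $\{1,\dots,d_i\}$ via the amenable Rokhlin lemma (Lemma~\ref{1107112242}) and attaching, to each tile, an independently chosen point from a separated subset realizing the maximum in $m_{\mathcal{V},\mathcal{U}}(F_k)$; this mirrors the strategy of \cite[Lemma~5.2]{KLAJM} but keeps all tuples inside a fixed element of $\mathcal{U}^{d_i}$. By \eqref{1306210043}, it suffices to show, for each $\mathcal{V}\in\mathcal{C}_X^o$ and each $\varepsilon>0$ such that every open $\rho$-ball of radius $\varepsilon$ lies in some element of $\mathcal{V}$, that $h(G,\varepsilon|\mathcal{U})\ge h^a(G,\mathcal{V}|\mathcal{U})$, and then to take $\sup_\mathcal{V}$.

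Fix $F_0\in\fF_G$, $\delta_0>0$, $\epsilon'>0$, and small $\tau,\eta>0$. Apply Lemma~\ref{1107112242} with a sufficiently large $K\supseteq F_0\cup\{e_G\}$ and small $\delta$ to obtain $l$ and F\o lner sets $F_1,\dots,F_l$, enlarging each $F_k$ so that $m_{\mathcal{V},\mathcal{U}}(F_k)\ge|F_k|\bigl(h^a(G,\mathcal{V}|\mathcal{U})-\epsilon'/2\bigr)$ (possible by the Ornstein--Weiss convergence underlying the definition of $h^a$). For each $k$ pick $K^*_k=\bigcap_{s\in F_k}s^{-1}U^{(k)}_s\in\mathcal{U}_{F_k}$ attaining the maximum in $m_{\mathcal{V},\mathcal{U}}(F_k)$. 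By the Lebesgue hypothesis, every open ball of radius $\varepsilon$ in the Bowen pseudometric $(y,y')\mapsto\max_{s\in F_k}\rho(sy,sy')$ lies in some element of $\mathcal{V}_{F_k}$, so any maximal $\varepsilon$-separated subset $S_k\subset K^*_k$ for this pseudometric satisfies $|S_k|\ge N(\mathcal{V}_{F_k},K^*_k)$. Then, for a good enough sofic approximation $\sigma_i:G\to\sym(d_i)$, Lemma~\ref{1107112242} yields $C_1,\dots,C_l\subset\{1,\dots,d_i\}$ such that the sets $\sigma_i(F_k)C_k$ are pairwise disjoint, $(1-\tau-\eta)$-cover $\{1,\dots,d_i\}$, and admit bijective parametrizations $F_k\times C_k\ni(s,c)\mapsto\sigma_{i,s}(c)$. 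For each assignment $\vec y=(y_c)_c$ with $y_c\in S_k$ whenever $c\in C_k$, define $\vec x(\vec y)\in X^{d_i}$ by $x_{\sigma_{i,s}(c)}:=sy_c$ for $(s,c)\in F_k\times C_k$ and $x_j:=x^*_j$ (a preselected point) for $j\in J:=\{1,\dots,d_i\}\setminus\bigcup_k\sigma_i(F_k)C_k$.

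Three verifications finish the argument. (a)~$\vec x(\vec y)\in X^{d_i}_{F_0,\delta_0,\sigma_i}$: for each $s\in F_0$, the sum $\frac{1}{d_i}\sum_j\rho^2(sx_j,x_{\sigma_{i,s}(j)})$ splits into contributions from $j\in J$ (density $\le\tau+\eta$), from coordinates $\sigma_{i,t}(c)$ with $t\in F_k\setminus s^{-1}F_k$ (controlled by F\o lner invariance together with $\sum_k|C_k||F_k|\le d_i/(1-\eta)$), and from the small set of $c$ where $\sigma_{i,s}\sigma_{i,t}(c)\neq\sigma_{i,st}(c)$ (of size $o(d_i)$ for good enough $\sigma_i$); each contribution is bounded by its density times $\diam(X)^2$, and $\tau,\eta,\delta$ and the quality of $\sigma_i$ can be tuned so that the total is $<\delta_0^2$. (b)~All tuples $\vec x(\vec y)$ lie in a common $V_0=\prod_jU_j\in\mathcal{U}^{d_i}$, where $U_{\sigma_{i,s}(c)}:=U^{(k)}_s$ (well-defined by bijectivity and pairwise disjointness) and $U_j\in\mathcal{U}$ is fixed to contain $x^*_j$ on $J$; this holds because $y_c\in K^*_k$ forces $sy_c\in U^{(k)}_s$. (c)~Two assignments $\vec y\neq\vec y'$ that differ at some $c\in C_k$ yield $(\rho_{d_i},\varepsilon)$-separated tuples, because the $\varepsilon$-separation of $S_k$ in the Bowen pseudometric supplies $s^*\in F_k$ with $\rho(s^*y_c,s^*y'_c)\ge\varepsilon$ at coordinate $\sigma_{i,s^*}(c)$. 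Combining,
\[
N_\varepsilon\bigl(X^{d_i}_{F_0,\delta_0,\sigma_i}\cap V_0,\rho_{d_i}\bigr)\ \ge\ \prod_{k=1}^l|S_k|^{|C_k|}\ \ge\ \exp\!\Bigl((1-\tau-\eta)d_i\bigl(h^a(G,\mathcal{V}|\mathcal{U})-\epsilon'/2\bigr)\Bigr),
\]
using $\sum_k|C_k||F_k|=\sum_k|\sigma_i(F_k)C_k|\ge(1-\tau-\eta)d_i$. Taking $\limsup_{i\to\infty}$, then $\tau,\eta,\epsilon'\to 0$, then $\inf_{F_0,\delta_0}$, and finally $\sup_\mathcal{V}$ via \eqref{1306210043}, yields the conclusion. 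The main obstacle is (a): simultaneously balancing the Rokhlin parameters $(\tau,\eta)$, the F\o lner invariance of each $F_k$ with respect to $F_0$, and the quality of the sofic approximation so that all three error sources collectively fit inside $\delta_0^2$.
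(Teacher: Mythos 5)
Your proof is correct and follows essentially the same route as the paper's (which adapts \cite[Lemma 5.2]{KLAJM}): tile $\{1,\dots,d_i\}$ by the amenable Rokhlin lemma, plant $(\rho_{F_k},\varepsilon)$-separated subsets of sets in $\mathcal{U}_{F_k}$ on the tiles to build quasi-periodic tuples, and verify membership in $X^{d_i}_{F_0,\delta_0,\sigma_i}$, confinement to a single element of $\mathcal{U}^{d_i}$, and $\varepsilon$-separation. Your two small deviations --- using one maximizing $K^*_k$ per tile shape instead of all $K\in\mathcal{U}_{F_k}$, and pinning the leftover coordinates to fixed points so that the $|\mathcal{U}|^{\delta' d}$ correction factor of the paper is unnecessary --- are harmless simplifications of the same argument.
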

\begin{proof}
Let $\mathcal{U}_1\in \mathcal{C}_X^o$. We only need to prove $h (G, X| \mathcal{U})\ge h^a (G, \mathcal{U}_1| \mathcal{U})$.

We choose $\varepsilon> 0$ such that any open ball with $\rho$-radius $\varepsilon$ is contained in some element of $\mathcal{U}_1$, and let $\theta> 0, F\in \mathfrak{F}_G, \delta> 0$. We are to finish the proof by showing
\begin{equation} \label{1107111631}
\frac{1}{d} \log \max_{V\in \mathcal{U}^{d}} N_\varepsilon \left(X^{d}_{F, \delta, \sigma}\cap V, \rho_{d}\right)\ge h^a (G, \mathcal{U}_1| \mathcal{U})- 3 \theta
\end{equation}
 once $\sigma: G\rightarrow Sym (d)$ is a good enough sofic approximation for $G$ with some $d\in \mathbb{N}$.

 Let $M> 0$ be large enough and $\delta'> 0$ small enough such that the diameter of the space $X$ is at most $M$ and
\begin{equation} \label{1107111645}
\sqrt{\delta'} M< \frac{\delta}{2}, \delta' \log |\mathcal{U}|< \theta\ \text{and}\ (1- \delta') h^a (G, \mathcal{U}_1| \mathcal{U})\ge h^a (G, \mathcal{U}_1| \mathcal{U})- \theta.
\end{equation}

Applying Lemma \ref{1107112242}, there are an $l\in \mathbb{N}$ and $F_1, \cdots, F_l\in \mathfrak{F}_G$ so that
\begin{equation} \label{1107120045}
\min_{k= 1}^l \min_{s\in F} \frac{|s^{- 1} F_k\cap F_k|}{|F_k|}\ge 1- \delta'
\end{equation}
and
\begin{equation} \label{1107112321}
\min_{k= 1}^l \frac{1}{|F_k|} \max_{K\in \mathcal{U}_{F_k}} \log N ((\mathcal{U}_1)_{F_k}, K)\ge \max \{0, h^a (G, \mathcal{U}_1| \mathcal{U})- \theta\},
\end{equation}
such that once $\sigma: G\rightarrow Sym (d)$ is a good enough sofic approximation for $G$ with some $d\in \mathbb{N}$ then there exist $C_1, \cdots, C_l\subset \{1, \cdots, d\}$ satisfying
\begin{enumerate}

\item the sets $\sigma (F_k) C_k, k\in \{1, \cdots, l\}$ are pairwise disjoint;

\item $\{\sigma (F_k) C_k: k\in \{1, \cdots, l\}\}$ $(1- \delta')$-covers $\{1, \cdots, d\}$;

\item for every $k\in \{1, \cdots, l\}$, the map $F_k\times C_k\ni (s, c)\mapsto \sigma_s (c)$ is bijective; and

\item for all $k\in \{1, \cdots, l\}$ and $s\in F, s_k\in F_k, c_k\in C_k$, $\sigma_{s s_k} (c_k)= \sigma_s \sigma_{s_k} (c_k)$.
\end{enumerate}
Remark again that since the group $G$ is amenable, such subsets $F_1, \cdots, F_l$ must exist.

Now assume that $\sigma: G\rightarrow Sym (d)$ is a good enough sofic approximation for $G$ with some $d\in \mathbb{N}$ and let $C_1, \cdots, C_l\subset \{1, \cdots, d\}$ be constructed as above.

\medskip

For each $k\in \{1, \cdots, l\}$ and any $K\in \mathcal{U}_{F_k}$, we take a maximal $(\rho_{F_k}, \varepsilon)$-separated subset $E_{k, K}$ of $K$ which is obviously finite, where
$$\rho_{F_k} (x, x')= \max_{g\in F_k} \rho (g x, g x')\ \text{for}\ x, x'\in X.$$
Then for each $y\in K$ there exists an $x\in E_{k, K}$ such that $\rho_{F_k} (x, y)< \varepsilon$. Observe that any open ball with $\rho$-radius $\varepsilon$ is contained in some element of $\mathcal{U}_1$, and hence any open ball with $\rho_{F_k}$-radius $\varepsilon$ is contained in some element of $(\mathcal{U}_1)_{F_k}$, thus
\begin{equation} \label{1306220131}
|E_{k, K}|\ge N ((\mathcal{U}_1)_{F_k}, K).
\end{equation}

Now let $(y_1, \cdots, y_l)$ be any $l$-tuple with
$$y_k\in \prod_{c\in C_k} E_{k, K_c},\ \text{where}\ k\in \{1, \cdots, l\}\ \text{and}\ K_c\in \mathcal{U}_{F_k}\ \text{for each}\ c\in C_k.$$
From the construction of $C_1, \cdots, C_l$, it is not hard to see that there exists at least one point $(x_1, \cdots, x_d)\in X^d$ such that once $i\in \sigma (F_k) C_k$ for some $k\in \{1, \cdots, l\}$, say $i= \sigma_{s_k} (c_k)$ with $s_k\in F_k$ and $c_k\in C_k$, then $x_i= s_k y_k (c_k)$.
Let $(x_1, \cdots, x_d)\in X^d$ be such a point (corresponding to the $l$-tuple $(y_1, \cdots, y_l)$).
Let $s\in F$ and $i\in \{1, \cdots, d\}$. Once $i= \sigma_{s_k} (c_k)$ for some $s_k\in F_k$ and $c_k\in C_k, k\in \{1, \cdots, l\}$, if $s s_k\in F_k$, then $s x_i= s s_k y_k (c_k)= x_{\sigma_{s s_k} (c_k)}= x_{\sigma_s \sigma_{s_k} (c_k)}= x_{\sigma_s (i)}$. Which implies that
\begin{equation} \label{1107120038}
\frac{1}{d} \sum_{i= 1}^d \rho^2 (s x_i, x_{\sigma_s (i)})= \frac{1}{d} \sum_{i\in \{1, \cdots, d\}\setminus E} \rho^2 (s x_i, x_{\sigma_s (i)})\le \frac{M^2}{d} |\{1, \cdots, d\}\setminus E|,
\end{equation}
where
$$E= \bigcup_{k= 1}^l \sigma (s^{- 1} F_k\cap F_k) C_k.$$
Using the construction of $C_1, \cdots, C_l$ again, by \eqref{1107120045} one has
\begin{equation} \label{1107120043}
|E|= \sum_{k= 1}^l |s^{- 1} F_k\cap F_k|\cdot |C_k|\ge (1- \delta') \sum_{k= 1}^l |F_k|\cdot |C_k|\ge d (1- \delta')^2\ge d (1- 2 \delta').
\end{equation}
Combining \eqref{1107120038} with \eqref{1107120043}, we obtain
$$\frac{1}{d} \sum_{i= 1}^d \rho^2 (s x_i, x_{\sigma_s (i)})\le 2 \delta' M^2< \frac{\delta^2}{2}$$
by the selection \eqref{1107111645} of $M$ and $\delta'$. In particular, $(x_1, \cdots, x_d)\in X^d_{F, \delta, \sigma}$.

\medskip

On one hand, from the above constructions, it is easy to check that, given $K_c\in \mathcal{U}_{F_k}$ for each $k\in \{1, \cdots, l\}$ and any $c\in C_k$, there exists $W\in \mathcal{W}$ with
$$\mathcal{W}\doteq \prod_{k= 1}^l \mathcal{U}^{\sigma (F_k) C_k}\times \{X\}^{\{1, \cdots, d\}\setminus \bigcup\limits_{k= 1}^l \sigma (F_k) C_k},$$
such that if a point $(x_1, \cdots, x_d)\in X^d$ corresponds to an $l$-tuple from
$\prod_{k= 1}^l \prod_{c\in C_k} E_{k, K_c}$
then $(x_1, \cdots, x_d)\in W$. 
On the other hand, assume that $(x_1, \cdots, x_d)\in X^d$ and $(x_1', \cdots, x_d')\in X^d$ correspond to distinct $l$-tuples $(y_1, \cdots, y_l)$ and $(y_1', \cdots, y_l')$ from
$\prod_{k= 1}^l \prod_{c\in C_k} E_{k, K_c}$, where $K_c\in \mathcal{U}_{F_k}$ for each $k\in \{1, \cdots, l\}$ and any $c\in C_k$,
respectively. Thus, for some $k\in \{1, \cdots, l\}$ and $c\in C_k$, $y_k (c)$ and $y_k' (c)$ are distinct elements of $E_{k, K_c}$, in particular, $\rho_{F_k} (y_k (c), y_k' (c))\ge \varepsilon$, and then
\begin{eqnarray*}
\rho_d ((x_1, \cdots, x_d), (x_1', \cdots, x_d'))&\ge & \max_{i\in \sigma (F_k) c} \rho (x_i, x_i') \\
&= & \max_{s\in F_k} \rho (s y_k (c), s y_k' (c))= \rho_{F_k} (y_k (c), y_k' (c))\ge \varepsilon.
\end{eqnarray*}
This implies that
\begin{eqnarray*}
\max_{W\in \mathcal{W}} N_\varepsilon \left(X^{d}_{F, \delta, \sigma}\cap W, \rho_{d}\right)&\ge & \left|\prod_{k= 1}^l \prod_{c\in C_k} E_{k, K_c}\right|= \prod_{k= 1}^l \prod_{c\in C_k} |E_{k, K_c}| \\
&\ge & \prod_{k= 1}^l \prod_{c\in C_k}
N ((\mathcal{U}_1)_{F_k}, K_c)\ (\text{using \eqref{1306220131}}).
\end{eqnarray*}
Combining the above estimation with the fact that $\{\sigma (F_k) C_k: k\in \{1, \cdots, l\}\}$ $(1- \delta')$-covers $\{1, \cdots, d\}$, we obtain that
\begin{eqnarray} \label{1306220151}
& &\hskip - 66pt \log \left[|\mathcal{U}|^{\delta' d} \max_{V\in \mathcal{U}^{d}} N_\varepsilon \left(X^{d}_{F, \delta, \sigma}\cap V, \rho_{d}\right)\right]\nonumber \\
&\ge & \log \max_{W\in \mathcal{W}} N_\varepsilon \left(X^{d}_{F, \delta, \sigma}\cap W, \rho_{d}\right)\nonumber \\
&\ge & \sum_{k= 1}^l |C_k| \max_{K\in \mathcal{U}_{F_k}} \log N ((\mathcal{U}_1)_{F_k}, K)\nonumber \\
&\ge & \sum_{k= 1}^l |C_k|\cdot |F_k|\cdot \max \{0, h^a (G, \mathcal{U}_1| \mathcal{U})- \theta\}\ (\text{using \eqref{1107112321}})\nonumber \\
&\ge & d (1- \delta') \max \{0, h^a (G, \mathcal{U}_1| \mathcal{U})- \theta\}\nonumber \\
&\ge & d [h^a (G, \mathcal{U}_1| \mathcal{U})- 2 \theta]\ (\text{using \eqref{1107111645}}).
\end{eqnarray}
Then \eqref{1107111631} follows from \eqref{1107111645} and \eqref{1306220151}.
\end{proof}

Now let us prove our main result of this section.

\begin{proof}[Proof of Theorem \ref{1306160126}]
Assume that $(X, G)$ is asymptotically $h$-expansive. Let $\varepsilon> 0$. Then there exists $\mathcal{U}_1\in \mathcal{C}_X^o$ such that $h (G, X| \mathcal{U}_1)< \varepsilon$. Thus $h^{a, *} (G, X)\le h^a (G, X| \mathcal{U}_1)< \varepsilon$ by Proposition \ref{1306160116}, and finally $h^{a, *} (G, X)= 0$.

Now assume that $h^{a, *} (G, X)= 0$ and let $\varepsilon> 0$. By the definition, there exists $\mathcal{U}_3\in \mathcal{C}_X^o$ with $h^a (G, X| \overline{\mathcal{U}_3})< \varepsilon$. As $X$ is a compact metric space, we can take $\mathcal{U}_4\in \mathcal{C}_X^o$ with $\overline{\mathcal{U}_4}\succeq \mathcal{U}_3$, then $h (G, X| \overline{\mathcal{U}_4})< \varepsilon$ by Proposition \ref{1306160111}, and so $h^* (G, X)\le h (G, X| \mathcal{U}_4)< \varepsilon$, thus $h^* (G, X)\le 0$. That is, $(X, G)$ is asymptotically $h$-expansive.

We could prove similarly the remaining part of the theorem.
\end{proof}

By the same proof we have:

\begin{theorem} \label{1409221729}
$h^* (G, X)= h^{a, *} (G, X)$.
\end{theorem}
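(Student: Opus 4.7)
The plan is to run the two inequalities $h^*(G,X) \le h^{a,*}(G,X)$ and $h^{a,*}(G,X) \le h^*(G,X)$ separately, mimicking the scheme used in the proof of Theorem \ref{1306160126} but without taking the auxiliary threshold $\varepsilon$ to $0$ at the very end. The two key inputs are Proposition \ref{1306160111} (the ``sofic $\le$ amenable'' comparison of tail entropies, which required passing through a closed refinement) and Proposition \ref{1306160116} (the reverse ``amenable $\le$ sofic'' inequality).

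For the direction $h^{a,*}(G,X) \le h^*(G,X)$, I would simply pick an arbitrary $\mathcal{U} \in \mathcal{C}_X^o$ and apply Proposition \ref{1306160116} to get $h(G, X \mid \mathcal{U}) \ge h^a(G, X \mid \mathcal{U}) \ge h^{a,*}(G,X)$, using that $\mathcal{U}$ itself is an open cover and hence competes in the infimum defining $h^{a,*}$. Taking the infimum of the left-hand side over $\mathcal{U} \in \mathcal{C}_X^o$ yields $h^*(G,X) \ge h^{a,*}(G,X)$.

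For the reverse direction $h^*(G,X) \le h^{a,*}(G,X)$, I would take any $\mathcal{U}_3 \in \mathcal{C}_X^o$ and use compactness of $X$ to choose $\mathcal{U}_4 \in \mathcal{C}_X^o$ with $\overline{\mathcal{U}_4} \succeq \mathcal{U}_3$; this is the same open-refinement trick used in the second half of the proof of Theorem \ref{1306160126}. Applying Proposition \ref{1306160111} with the pair $(\overline{\mathcal{U}_4}, \mathcal{U}_3)$ gives $h(G, X \mid \overline{\mathcal{U}_4}) \le h^a(G, X \mid \overline{\mathcal{U}_3})$. Since $\mathcal{U}_4 \succeq \overline{\mathcal{U}_4}$ (every open set is contained in its closure), inspection of the definition of $h(G, \cdot \mid \cdot)$ shows that conditioning on a coarser cover cannot decrease the quantity, so $h(G, X \mid \mathcal{U}_4) \le h(G, X \mid \overline{\mathcal{U}_4})$. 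Chaining these estimates yields
\[
h^*(G,X) \;\le\; h(G, X \mid \mathcal{U}_4) \;\le\; h(G, X \mid \overline{\mathcal{U}_4}) \;\le\; h^a(G, X \mid \overline{\mathcal{U}_3}),
\]
and then taking the infimum over $\mathcal{U}_3 \in \mathcal{C}_X^o$ on the right—using that $h^{a,*}(G,X) = \inf_{\mathcal{V} \in \mathcal{C}_X^o} h^a(G, X \mid \overline{\mathcal{V}})$ as recorded in Section 5—delivers $h^*(G,X) \le h^{a,*}(G,X)$.

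I do not expect any serious obstacle, since all the analytical heavy lifting has already been done in Propositions \ref{1306160111} and \ref{1306160116} (via the two Rokhlin-type lemmas for sofic approximations of amenable groups). The only minor care point is the monotonicity remark $h(G, X \mid \mathcal{U}_4) \le h(G, X \mid \overline{\mathcal{U}_4})$, which should be stated explicitly from the definition so that closed versus open covers are handled cleanly in both directions; and one must respect the extended-real conventions $\log 0 = -\infty$ and the additive convention for $\pm\infty$ declared earlier, but these are compatible with all the chained inequalities above.
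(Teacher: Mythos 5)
Your proposal is correct and follows essentially the same route as the paper: the paper proves this theorem "by the same proof" as Theorem \ref{1306160126}, i.e.\ by combining Proposition \ref{1306160116} for the inequality $h^{a,*}(G,X)\le h^*(G,X)$ with Proposition \ref{1306160111} (applied to a closed refinement $\overline{\mathcal{U}_4}\succeq\mathcal{U}_3$) plus the monotonicity of $h(G,X|\cdot)$ in the conditioning cover for the reverse inequality, exactly as you do. Your explicit remark that conditioning on the coarser cover $\overline{\mathcal{U}_4}$ can only increase the quantity is the same step the paper uses implicitly when passing from $h(G,X|\overline{\mathcal{U}_4})$ to $h(G,X|\mathcal{U}_4)$.
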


Setting $\mathcal{U}_1= \mathcal{U}_2= \{X\}$ in Proposition \ref{1306160111} and $\mathcal{U}= \{X\}$ in Proposition \ref{1306160116}, we obtain directly the following observation \cite[Theorem 5.3]{KLAJM}.

\begin{corollary} \label{1307112259}
$h (G, X)= h^a (G, X)$.
\end{corollary}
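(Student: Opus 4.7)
The plan is to derive the equality immediately from the two preceding propositions by specializing to the trivial cover $\{X\}$. First I would unwind the definitions to verify that conditioning on the trivial cover is a no-op: since $\{X\}^{d} = \{X^{d}\}$, the intersection $X^{d}_{F,\delta,\sigma} \cap X^{d}$ equals $X^{d}_{F,\delta,\sigma}$, so $h_{F,\delta}(G, \mathcal{U}_1| \{X\}) = h_{F,\delta}(G, \mathcal{U}_1)$ for every $\mathcal{U}_1 \in \mathcal{C}_X^o$. Taking suprema over $\mathcal{U}_1 \in \mathcal{C}_X^o$ yields $h(G, X| \{X\}) = h(G, X)$; the analogous identity $h^a(G, X| \{X\}) = h^a(G, X)$ is built directly into the definition given in Section 5.

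With these bookkeeping identifications in hand, the inequality $h(G, X) \le h^a(G, X)$ falls out of Proposition \ref{1306160111} applied with $\mathcal{U}_1 = \mathcal{U}_2 = \{X\}$: the cover $\{X\}$ is at once open and closed, one has $\overline{\{X\}} = \{X\}$, and trivially $\{X\} \succeq \{X\}$, so the hypotheses are satisfied and the conclusion reads $h(G, X| \{X\}) \le h^a(G, X| \overline{\{X\}})$, i.e.\ $h(G, X) \le h^a(G, X)$. Conversely, Proposition \ref{1306160116} with $\mathcal{U} = \{X\}$ gives $h(G, X| \{X\}) \ge h^a(G, X| \{X\})$, which rewrites as $h(G, X) \ge h^a(G, X)$. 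Combining the two inequalities finishes the proof.

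There is no substantive obstacle to overcome here, since Propositions \ref{1306160111} and \ref{1306160116} already contain all the dynamical content; the only task remaining is the harmless verification that the trivial cover meets the hypotheses of both propositions and that conditioning on it leaves the entropy unchanged.
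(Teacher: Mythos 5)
Your proof is correct and is precisely the paper's argument: the authors also obtain the corollary by setting $\mathcal{U}_1=\mathcal{U}_2=\{X\}$ in Proposition \ref{1306160111} and $\mathcal{U}=\{X\}$ in Proposition \ref{1306160116}, using that conditioning on the trivial cover changes nothing. The only difference is that you spell out the (routine) verification that $\{X\}$ satisfies the hypotheses and that $h(G,X|\{X\})=h(G,X)$, which the paper leaves implicit.
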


\section{Actions over compact metric groups}

In this section we shall provide more interesting asymptotically $h$-expansive examples when we consider actions of countable discrete amenable groups.

\medskip

Recall that any $C^\infty$ diffeomorphism on a compact
manifold is asymptotically $h$-expansive by Buzzi \cite{Bu}. Moreover, if we consider a differentiable action $(X, G)$ in the sense that the homeomorphism of $X$ given by each $g\in G$ is a $C^{(1)}$ map, where $X$ is a compact smooth manifold (here we allow a smooth manifold to have different dimensions for different connected components, even including zero dimension) and $G$ is a countable discrete amenable group containing $\mathbb{Z}$ as a subgroup of infinite index, then the action $(X, G)$ has zero topological entropy (see for example \cite[Lemma 5.7]{LT} by Li and Thom), and so it is $h$-expansive.

\medskip

Moreover, inspired by Misiurewicz's work \cite[\S 7]{Mi1}, in the following we prove:

 \begin{theorem} \label{1307122130}
 Let $G$ be a countable discrete amenable group acting on a compact metric group $X$ by continuous automorphisms. Then the action $(X, G)$ is asymptotically $h$-expansive if and only if  $h^a (G, X)< \infty$.
 \end{theorem}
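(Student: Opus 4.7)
The forward implication is essentially formal. If $(X, G)$ is asymptotically $h$-expansive, Theorem \ref{1306160126} gives $h^{a,*}(G, X) = 0$; combining this with Theorem \ref{1409221729}, Corollary \ref{1207122131}, and Corollary \ref{1307112259} yields $h^a(G, X) = h(G, X) < \infty$.

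For the reverse direction, assume $h^a(G, X) < \infty$; the plan is to show $h^{a,*}(G, X) = 0$ and invoke Theorem \ref{1306160126} once more. First, by Birkhoff-Kakutani equip $X$ with a bi-invariant compatible metric $\rho$, and let $\mu$ be the normalized Haar measure on $X$. Since every $g \in G$ acts by automorphism, $g_*\mu$ is again bi-invariant, hence $g_*\mu = \mu$ by uniqueness, so $\mu \in M(X, G)$. For notational convenience write $V_F := \bigcap_{g \in F} g^{-1}(V)$ for a subset $V \subset X$ and $F \in \mathfrak{F}_G$. The strategy is to exploit translation and the group law to reduce the tail-entropy estimate to a single volume calculation at $e_X$.

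The central algebraic observation is the following. For a symmetric open neighborhood $V$ of $e_X$ and a finite cover $\mathcal{U}_V \in \mathcal{C}_X^o$ of $X$ by left-translates of $V$, the identity $g^{-1}(xV) = g^{-1}(x) \cdot g^{-1}(V)$ together with $V^{-1} = V$ forces every nonempty element of $(\overline{\mathcal{U}_V})_F$ to lie inside some translate $y \cdot (\overline{V^2})_F$. For any $\mathcal{V} \in \mathcal{C}_X^o$, the Lebesgue number lemma produces a symmetric neighborhood $V'$ of $e_X$ with $\mathcal{U}_{V'} \succeq \mathcal{V}$, so $h^a(G, \mathcal{V}| \mathcal{U}_W) \le h^a(G, \mathcal{U}_{V'}| \mathcal{U}_W)$ and it suffices to control tail entropies of translate-type covers. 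A Vitali-type packing argument using the bi-invariance of $\mu$ (together with an appropriate refinement of the cover scale) yields, uniformly in $y$ and $F$ with some constant $C(V')$ independent of $F$,
$$N\bigl((\mathcal{U}_{V'})_F,\ y \cdot (\overline{W^2})_F\bigr) \le C(V') \cdot \frac{\mu\bigl((\overline{W^2})_F\bigr)}{\mu\bigl(((V')^2)_F\bigr)}.$$
Taking logarithms, normalizing by $|F_n|$, and passing to the limit along a F\o lner sequence (using the Shannon-McMillan-Breiman theorem for amenable group actions to ensure convergence of the volume terms) produces $h^a(G, \mathcal{V}| \mathcal{U}_W) \le \Phi((V')^2) - \Phi(\overline{W^2})$, where $\Phi(U) := \lim_n -\frac{1}{|F_n|} \log \mu(U_{F_n})$.

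To close the argument, invoke the Katok-type volume/entropy formula for amenable group actions on compact metric groups by automorphisms: $h^a(G, X) = \lim_{U \to \{e_X\}} \Phi(U)$. This rests on $\mu$ being a measure of maximal entropy for such actions (a classical result extending Berg's theorem via the Ornstein-Weiss machinery) together with a Brin-Katok type local entropy theorem. Given this, letting $V' \to \{e_X\}$ gives $h^a(G, X| \mathcal{U}_W) \le h^a(G, X) - \Phi(\overline{W^2})$, and subsequently letting $W \to \{e_X\}$ yields $h^{a,*}(G, X) \le h^a(G, X) - h^a(G, X) = 0$, where the cancellation is valid precisely because $h^a(G, X) < \infty$. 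Combined with $h^{a,*}(G, X) \ge 0$, this gives $h^{a,*}(G, X) = 0$, as required. The main obstacle is verifying the volume/entropy formula in the generality required (genuine convergence along F\o lner sequences, and adapting between Borel atoms and open neighborhoods); the translation identity and the Vitali packing step are direct consequences of the compact-group structure together with the bi-invariance of $\rho$ and $\mu$.
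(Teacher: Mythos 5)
Your forward direction is correct and matches the paper. The reverse direction, however, has a genuine gap, concentrated in the two analytic ingredients you yourself flag. First, the existence of the limit $\Phi(U)=\lim_n -\frac{1}{|F_n|}\log\mu(U_{F_n})$ is not justified: the Shannon--McMillan--Breiman theorem concerns information functions of measurable partitions and gives almost-everywhere convergence of $-\frac{1}{|F_n|}\log\mu(\alpha_{F_n}(x))$, not convergence of $-\frac{1}{|F_n|}\log\mu(U_{F_n})$ for a fixed open neighbourhood $U$ of $e_X$; and the Ornstein--Weiss lemma does not apply either, since subadditivity of $F\mapsto-\log\mu(U_F)$ would require the positive-correlation inequality $\mu(U_E\cap U_F)\ge\mu(U_E)\,\mu(U_F)$, which does not hold in general. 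Second, and more seriously, the ``Katok-type volume/entropy formula'' $h^a(G,X)=\lim_{U\to\{e_X\}}\Phi(U)$ is not an available classical result in this generality: Berg's theorem (and Deninger's amenable-group version) asserts that Haar measure maximizes entropy, which is a different statement, and a Brin--Katok type theorem for amenable actions would at best give local entropies of the measure along partitions, not the identification of the \emph{topological} cover entropy $h^a(G,X)$ with the decay rates of Haar volumes of the open sets $U_{F_n}$. Both halves of that formula enter your final cancellation ($\Phi((V')^2)\to h^a(G,X)$ and $\Phi(\overline{W^2})\to h^a(G,X)$), and proving it is essentially the hard content of the theorem, so the argument is circular in effect.

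The repair is precisely to avoid ever forming the limit $\Phi$. The paper exploits the same translation structure you identify (packaged as the statement that Haar measure is $\alpha$-homogeneous: all translates $y\cdot U_F$ of a Bowen-type set have equal measure), but uses it to sandwich the \emph{finite-stage} volumes between reciprocals of covering numbers, roughly $m_F(\mathcal{V})\ge 1/(c(\mathcal{V})\,N(\mathcal{W}_F,X))$ and $M_F(St\,\mathcal{A})\le c(\mathcal{V})/N((St\mathcal{V})_F,X)$, so that the Vitali-type packing bound (Lemma \ref{L-1}) yields an estimate of the form $h^a(G,\mathcal{D}|\overline{\mathcal{G}})\le h^a(G,\mathcal{W})-h^a(G,St\mathcal{V})\le h^a(G,X)-h^a(G,\mathcal{C})$ expressed entirely in terms of cover entropies; these converge by Ornstein--Weiss, and their supremum over $\mathcal{C}\in\mathcal{C}_X^o$ is $h^a(G,X)$ by definition, so the hypothesis $h^a(G,X)<\infty$ licenses the cancellation with no volume/entropy formula needed. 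Two smaller points: your translation and packing steps themselves are sound (they are exactly the computations $hW_{F,x}=W_{F,hx}$ and Lemma \ref{L-1} in the paper), and a separate one-line argument is needed when $G$ is finite, where $h^a(G,X)<\infty$ forces $X$ to be finite and the action to be expansive.
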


 \begin{remark} \label{1409270036}
Let $G$ be a countable discrete amenable group acting on a compact metrizable group $X$ by continuous automorphisms.
If $(X, G)$ has finite topological entropy then, by Corollary \ref{1310281312}, Corollary \ref{1307112259} and Theorem \ref{1307122130},
 it admits an invariant measure with maximal entropy. If the amenable group $G$ is infinite and $(X, G)$ has infinite topological entropy, then, by the affinity property of measure-theoretic entropy (for actions of a countable discrete infinite amenable group), it will also admit an invariant measure with maximal entropy. Indeed,
 if we let $\mu$ be the normalized Haar measure of the compact metric group $X$ (and hence $\mu$ is automatically $G$-invariant), then
 the measure-theoretic $\mu$-entropy coincides with the topological entropy of the system \cite[Theorem 2.2]{Den} (remark that $X$ was assumed to be abelian in \cite[Theorem 2.2]{Den}, which is in fact not needed).
 Note that this fact was first pointed out by Berg \cite{Berg} in the case of $G= \mathbb{Z}$.
\end{remark}

\begin{remark}
The conclusion of Theorem \ref{1307122130} does not hold if we remove the group structure from $X$ even for the $\Zb$-actions. There are many such actions. The first one may be the Gurevic's example (for the detailed construction see for example \cite[Page 192]{Walters82}), which is a $\Zb$-action having finite entropy and without any invariant measure attaining maximal entropy. Another example is \cite[Example 6.4]{Mi1}, which is a $\mathbb{Z}$-action with finite entropy such that it is not asymptotically $h$-expansive, while each of its invariant measures has maximal entropy.
\end{remark}

The ``only if'' part of Theorem \ref{1307122130} comes from Corollary \ref{1207122131} and Corollary \ref{1307112259}. The proof of its ``if'' part relies on the concept of homogeneous measures.
And so first we recall the definition of homogeneous measures following \cite[\S 7]{Mi1}, which was first introduced by R. Bowen in \cite{B71}.

Let $G$ be a group acting on a compact metric space $X$, and denote by $\alpha$ the action. Let $\mu\in M (X)$. For each $\cU\in \cC^o_X$, we set
 $$P(\cU)=\bigcup_{x\in X}\left\{U\in\cU:x\in U\ \text{and}\ \mu(U)=\max_{V\in \cU, x\in V} \mu (V)\right\}\in \cC^o_X.$$
The measure $\mu$ is called \emph{$\alpha$-homogeneous} if there exist mappings $D:\cC^o_X\rightarrow \cC^o_X$ and $c: \cC^o_X\rightarrow (0,\infty)$ such that for any $\cU\in \cC^o_X$ and each $F\in\cF_G$ we have
 $$\mu(V)\leq c(\cU)\mu(U)\ \text{for all}\ U\in P(\cU_F)\ \text{and}\ V\in D(\cU)_F.$$

 In general, it is not easy to check if a measure is homogeneous. While for $G= \Zb$ Misiurewicz gave a sufficient condition for the existence of such a measure \cite[Theorem 7.2]{Mi1}, which can be generalized to a general group $G$ as follows.

 \medskip

Following the spirit of Misiurewicz \cite{Mi1}, let $H$ be a group acting on a compact metric space $(X, \rho)$ with the action $\Phi$, that is, $\Phi$ is a homomorphism of $H$ into the group of all homeomorphisms of $X$. Recall that $\Phi$ is {\it transitive} if for any $x, y\in X$ there exists $g\in H$ with $g x= y$, and {\it equicontinuous} if for each $\varepsilon> 0$ there exists $\delta> 0$ such that $\rho (x_1, x_2)< \delta$ implies $\rho (g x_1, g x_2)< \varepsilon$ for all $x_1, x_2\in X$ and $g\in H$. Recall that $\mu\in M (X)$ is {\it invariant with respect to $\Phi$} if $g \mu= \mu$ for each $g\in H$. While the transitivity here is different from the usual one in topological dynamics, the definitions of equicontinuity and invariance of a measure are just as usual.

\begin{lemma}
\label{L-homogeneous}
Let $G$ be a group acting on a compact metric space $X$, and denote by $\alpha$ the action. Let $H$ be a group and $T$ an action of $G$ on $H$ by homomorphisms $g\mapsto T_g$ for each $g\in G$, equivalently, each $T_g: H\rightarrow H$ is a homomorphism. Now assume that $\Phi$ is a transitive equicontinuous action of $H$ on $X$ by homeomorphisms such that $\mu\in M (X)$ is invariant with respect to $\Phi$ and $g (h x)= (T_g h) (g x)$ for all $x\in X, g\in G$ and $h\in H$. Then $\mu$ is $\alpha$-homogeneous.
\end{lemma}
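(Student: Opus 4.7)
The plan is to take $c(\cU)=1$ for every $\cU\in\cC^o_X$ and to construct $D(\cU)$ using equicontinuity of $\Phi$ together with a Lebesgue number of $\cU$. Concretely, given $\cU\in\cC^o_X$ let $\eta>0$ be a Lebesgue number for $\cU$; by equicontinuity of $\Phi$ pick $\delta>0$ so that $\rho(x_1,x_2)<\delta$ forces $\rho(hx_1,hx_2)<\eta$ for every $h\in H$; and let $D(\cU)\in\cC^o_X$ be any finite open cover of $X$ of mesh at most $\delta$. The entire verification will then come down to translating an arbitrary element of $D(\cU)_F$ by a single element of $H$ into an element of $\cU_F$ that contains the distinguished point witnessing the definition of $P(\cU_F)$.

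The main algebraic input is the commutation relation $g\circ h = T_g(h)\circ g$ on $X$, which is simply a rewriting of $g(hx)=(T_gh)(gx)$. Conjugating gives $g\circ h^{-1}\circ g^{-1}=T_g(h^{-1})$, equivalently $h^{-1}\circ g^{-1}=g^{-1}\circ T_g(h^{-1})$, which is the identity I plan to apply inside the intersections defining $\cU_F$ and $D(\cU)_F$. Fix $F\in\fF_G$, $U\in P(\cU_F)$ with witness $x_0\in U$ (so $\mu(W)\le\mu(U)$ for every $W\in \cU_F$ containing $x_0$), and $V=\bigcap_{g\in F}g^{-1}D_g\in D(\cU)_F$ with each $D_g\in D(\cU)$. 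If $V=\emptyset$ there is nothing to prove, so pick any $y\in V$, and by transitivity of $\Phi$ choose $h_y\in H$ with $h_yx_0=y$.

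The key computation has two parts. First, the commutation relation gives
\[
h_y^{-1}V=\bigcap_{g\in F}h_y^{-1}g^{-1}D_g=\bigcap_{g\in F}g^{-1}\bigl(T_g(h_y^{-1})D_g\bigr).
\]
Second, because each $D_g$ has diameter at most $\delta$ and $T_g(h_y^{-1})$ is the homeomorphism of $X$ induced by an element of $H$, equicontinuity yields $\diam\bigl(T_g(h_y^{-1})D_g\bigr)<\eta$, so by the Lebesgue number property there exists $U_g\in\cU$ with $T_g(h_y^{-1})D_g\subset U_g$. Moreover $gx_0\in T_g(h_y^{-1})D_g$: using the commutation one has $T_g(h_y)gx_0=gh_yx_0=gy\in D_g$, hence $gx_0\in T_g(h_y)^{-1}D_g=T_g(h_y^{-1})D_g\subset U_g$. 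Thus
\[
h_y^{-1}V\subset\bigcap_{g\in F}g^{-1}U_g\in\cU_F
\]
and this element of $\cU_F$ contains $x_0$. The defining property of $P(\cU_F)$ gives $\mu\bigl(\bigcap_{g\in F}g^{-1}U_g\bigr)\le\mu(U)$, and $\Phi$-invariance of $\mu$ gives $\mu(V)=\mu(h_y^{-1}V)$, so $\mu(V)\le\mu(U)$, as required.

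I do not expect any serious obstacle; the only points that need care are the correct direction of the commutation identity (it is easy to misplace a $T_g$ versus $T_{g^{-1}}$) and the observation that transitivity is only used to move the fixed witness $x_0$ to \emph{some} point of $V$, not to control all of $V$ simultaneously, which is enough because equicontinuity plus $\Phi$-invariance then handles the entire preimage $h_y^{-1}V$ uniformly.
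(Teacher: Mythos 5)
Your proof is correct and follows essentially the same route as the paper: both take $c(\cU)=1$, build $D(\cU)$ from a Lebesgue number of $\cU$ together with equicontinuity of $\Phi$, and then combine the commutation relation $g\circ h=T_g(h)\circ g$ with transitivity and $\Phi$-invariance of $\mu$ to compare an element of $D(\cU)_F$ with the witness point of an element of $P(\cU_F)$. The only (cosmetic) difference is that the paper routes the comparison through auxiliary $H$-equivariant sets $W_{F,x}$ whose measure is constant in $x$, whereas you translate the element $V\in D(\cU)_F$ directly by a single $h_y\in H$; your version is a slight streamlining of the same argument.
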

\begin{proof}
Let $\cU\in \cC^o_X$. We aim to construct some $D (\cU)\in \cC_X^o$ with $c (\cU)= 1$ such that both of mappings $D$ and $c$ satisfy the condition of $\mu$ being $\alpha$-homogeneous.

Let $\varepsilon> 0$ be a Lebesgue number of $\cU$. By the equicontinuity of $\Phi$ there exists $\delta> 0$ such that $\rho (x_1, x_2)\le \delta$ implies $\rho (h x_1, h x_2)< \frac{\varepsilon}{3}$ for all $x_1, x_2\in X$ and $h\in H$.
Now let $W$ be
 a non-empty open set with diameter at most $\delta$.
  For each $x\in X$, denote by $W_x$ the set of all $y\in X$ such that both $x$ and $y$ are contained in $h W$ for some $h\in H$, then $\rho (x, y)< \frac{\varepsilon}{3}$ for each $y\in W_x$, and so $W_x\subset U_x$ for some $U_x\in\cU$.

Observe that the family of non-empty open subsets $\{h W: h\in H\}$ covers $X$ by the transitivity of the action $\Phi$, and so we could choose $D(\cU)\in \cC_X^o$ with $D (\cU)\subset \{h W: h\in H\}$. Now we show that $D (\cU)$ and $c (\cU)= 1$ satisfy the required properties.

 Let $F\in \cF_G$.
  Set $W_{F, x}=\bigcap_{\gamma\in F} \gamma^{- 1} W_{\gamma x}\subset \bigcap_{\gamma\in F} \gamma^{- 1} U_{\gamma x}\in \cU_F$ for each $x\in X$. Let $U\in D(\cU)_F$, and say $U= \bigcap_{g\in F} g^{- 1} (h_g W)$ with $h_g\in H$ for any $g\in F$. Then $g x\in h_g W$ and hence $h_g W\subset W_{g x}$ for all $x\in U$ and $g\in F$.
   Thus, for any $x\in U$,
   $$U= \bigcap_{g\in F} g^{- 1} (h_g W)\subset \bigcap_{g\in F} g^{- 1} W_{g x}= W_{F, x}\ni x$$
   and hence (observing $W_{F, x}\subset \bigcap_{\gamma\in F} \gamma^{- 1} U_{\gamma x}\in \cU_F$)
   \begin{equation}
\label{e1}
\mu(U)\leq \mu (W_{F, x})\leq \max\{\mu(V): x\in V\ \text{and}\ V\in \cU_F\}.
\end{equation}

Let $x\in X$ and $k\in H$. For $y\in X$, $y\in k W_x$, equivalently $k^{- 1} y\in W_x$, if and only if both $x$ and $k^{- 1} y$ are contained in $h W$ for some $h\in H$, equivalently both $k x$ and $y$ are contained in $k h W$ for some $h\in H$, if and only if $y\in W_{k x}$. Thus $k W_x= W_{k x}$.
Now let $h\in H, Y\subset X$ and $g\in G$. By assumptions, $g (h (g^{- 1} Y))= (T_g h) (g (g^{- 1} Y))= (T_g h) Y$, equivalently, $h (g^{- 1} Y)= g^{- 1} ((T_g h) Y)$.
And hence
  $$h (g^{- 1} W_{g x})= g^{- 1} ((T_g h) W_{g x})= g^{- 1} (W_{(T_g h) (g x)})= g^{- 1} (W_{g (h x)}).$$
  Which implies that, for all $x\in X$ and any $h\in H, F\in \cF_G$,
$$h W_{F, x}= h \left(\bigcap_{\gamma\in F} \gamma^{- 1} W_{\gamma x}\right)= \bigcap_{\gamma\in F} h (\gamma^{- 1} W_{\gamma x})= \bigcap_{\gamma\in F} \gamma^{- 1} W_{\gamma (h x)}= W_{F, h x}.$$
As the action $\Phi$ is transitive and $\mu$ is invariant with respect to $\Phi$, we obtain
\begin{equation} \label{e2}
\mu(W_{F, x})=\mu(W_{F, y})\ \text{for all}\ x, y\in X\ \text{and any}\ F\in \cF_G.
\end{equation}
Summing up, for each $F\in \cF_G$ and any $U\in D (\cU)_F$ and $Q\in P (\cU_F)$, let $x\in U$, and by the construction of $P (\cU_F)$ we could choose $y\in Q$ with $\mu (Q)= \max \{\mu (V): y\in V\ \text{and}\ V\in \cU_F\}$, and then
\begin{eqnarray*}
\mu (U)&\le & \mu (W_{F, x})\ (\text{using \eqref{e1}})= \mu (W_{F, y})\ (\text{using \eqref{e2}}) \\
&\le & \max\{\mu(V): y\in V\ \text{and}\ V\in \cU_F\}\ (\text{using \eqref{e1} again})= \mu (Q).
\end{eqnarray*}
Recalling $c(\cU)=1$, we see that the measure $\mu$ is $\alpha$-homogeneous.
\end{proof}

As a direct corollary, we have:

\begin{proposition} \label{1307122312}
Let $G$ be a group acting on a compact metric group $X$ by continuous automorphisms, and denote by $\alpha$ the action. Let $\mu\in M (X)$ be the normalized Haar measure of the group $X$. Then $\mu$ is $\alpha$-homogeneous.
 \end{proposition}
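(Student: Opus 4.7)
The plan is to invoke Lemma \ref{L-homogeneous} with the auxiliary group $H$ taken to be $X$ itself, acting on $X$ by left translations. This is the natural candidate because it provides a transitive equicontinuous action that preserves Haar measure, and the automorphism structure of the $G$-action will supply the required compatibility with $T$.

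First I would set $\Phi(h,x) = hx$ and verify the hypotheses of Lemma \ref{L-homogeneous}. Transitivity is immediate: given $x, y \in X$ take $h = y x^{-1}$. Equicontinuity follows because the group multiplication $X \times X \to X$ is continuous on a compact space and hence uniformly continuous, so for every $\varepsilon > 0$ there exists $\delta > 0$ with $\rho(x_1, x_2) < \delta$ implying $\rho(h x_1, h x_2) < \varepsilon$ uniformly in $h \in X$. The Haar measure $\mu$ is $\Phi$-invariant by the defining left-invariance of Haar measure.

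Second I would define $T_g : H \to H$ by $T_g h = g(h)$, that is, $T_g$ is simply the automorphism $g$ viewed as a map $X \to X$. Since $g$ acts on $X$ by continuous automorphisms, each $T_g$ is a group homomorphism of $H = X$. The compatibility identity $g(hx) = (T_g h)(gx)$ then reduces to $g(hx) = g(h) \, g(x)$, which is just the statement that $g$ is an automorphism. With all hypotheses of Lemma \ref{L-homogeneous} in place, the conclusion that $\mu$ is $\alpha$-homogeneous follows directly.

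No genuine obstacle is anticipated: the argument is essentially a matter of recognizing the correct auxiliary structure, namely the compact metric group $X$ acting on itself by left translations. The only point that requires a brief verification is equicontinuity of $\Phi$, which is handled by the standard compactness argument above.
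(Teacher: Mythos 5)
Your proposal is correct and follows essentially the same route as the paper: take $H=X$ acting on itself by left translations, set $T_g$ equal to the automorphism $g$, and apply Lemma \ref{L-homogeneous}, with the compatibility identity reducing to the automorphism property $g(hx)=g(h)g(x)$. The only (immaterial) difference is in justifying equicontinuity of the translation action: you use uniform continuity of multiplication on the compact space $X\times X$, whereas the paper invokes the Birkhoff--Kakutani theorem to obtain a left-invariant compatible metric; both are valid.
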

 \begin{proof}
 Set $H= X$ and let $T$ be the action $\alpha$ of $G$ on $H$ by automorphisms $T_g: H\rightarrow H, h\mapsto g h$.
  Now let $\Phi$ be the action of $H$ on $X$ by homeomorphisms $h: X\rightarrow X, x\mapsto h x$ for all $h\in X$. Obviously, the action $\Phi$ is transitive and the measure $\mu$ is invariant with respect to the action $\Phi$ and
$$g (h x)= (g h) (g x)= (T_g h) (g x)\ \text{for all}\ x\in X, g\in G\ \text{and}\ h\in H,$$
recalling that $G$ acts on $H= X$ by automorphisms. Observe that by the well-known Birkhoff-Kakutani Theorem the compact metric group $X$ admits a left-invariant compatible metric (for example see \cite[Page 430, Lemma C.2]{EW}), which implies that the action $\Phi$ is equicontinuous. Thus the measure $\mu$ is $\alpha$-homogeneous by Lemma \ref{L-homogeneous}.
 \end{proof}

For a $\Zb$-action acting on a compact metric space $X$, if the action admits a homogeneous measure and has finite topological entropy, then it is asymptotically $h$-expansive \cite[Theorem 7.1]{Mi1}. In the following, we shall see that the proof there also works for a general countable discrete infinite amenable group $G$.

Let $(X, G)$ be an action of a countable group $G$ on a compact metric space $X$ with $\mu\in M (X), F\in\cF_G, \cU\in\cC^o_X$. We put
 $$M_F(\cU)=\max_{U\in \cU_F} \mu(U)> 0\ \text{and}\ m_F(\cU)=\min_{U\in P(\cU_F)} \mu(U).$$
For $\cU, \cV\in \cC^o_X$, $M_F(\cV)\leq M_F(\cU)$ whenever $\cV\succeq \cU$, and if $\mu$ is $\alpha$-homogeneous then
\begin{equation}
\label{E-2}
M_F(\cV)\le M_F (D (\cU))\leq c(\cU) m_F(\cU)\ \text{and hence}\ m_F(\cU)\ge \frac{M_F(\cV)}{c(\cU)}> 0
\end{equation}
whenever $\cV\succeq D(\cU)$.
Let $\cU\in\cC^o_X$ and $Y\subset X$. The \textit{star of Y with respect to $\cU$} and the {\it star of $\cU$} are defined, respectively, as
$$st(Y,\cU)=\cup\{U\in\cU: U\cap Y\neq \emptyset\}\ \text{and}\ St\cU=\{st(U,\cU):U\in\cU\}\in\cC^o_X.$$

Following \cite[Lemma 7.1]{Mi1}, it is easy to obtain:

\begin{lemma}
\label{L-1}
Let $(X, G)$ be an action of a countable group $G$ on a compact metric space $X$ with $\mu\in M (X)$ and $\cU,\cV\in \cC^o_X$ with $\cV\succeq\cU$. Then
$$\max_{K\in \cU_F} N((St\cV)_F, K)\cdot m_F (\cV)\leq M_F (St\cU).$$
\end{lemma}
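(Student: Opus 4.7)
The plan is to fix $K \in \cU_F$ realizing $\max_{K' \in \cU_F} N((St\cV)_F, K')$, write $K = \bigcap_{g \in F} g^{-1} U_g^K$ with $U_g^K \in \cU$, and set $U^* = \bigcap_{g \in F} g^{-1}\, st(U_g^K, \cU)$, which is a specific element of $(St\cU)_F$ and so satisfies $\mu(U^*) \leq M_F(St\cU)$. The strategy is to produce a pairwise disjoint subfamily $\cQ \subset P(\cV_F)$ that is entirely contained in $U^*$ while simultaneously being large enough to certify $N((St\cV)_F, K) \leq |\cQ|$; the desired inequality then falls out of summing the bound $\mu(Q) \geq m_F(\cV)$ over the disjoint $Q \in \cQ$ and comparing with $\mu(U^*)$.

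The first key step verifies that every $Q \in P(\cV_F)$ meeting $K$ satisfies $Q \subset U^*$. Writing $Q = \bigcap_g g^{-1} V_g^Q$ with $V_g^Q \in \cV$, a witness $x \in Q \cap K$ supplies $gx \in V_g^Q \cap U_g^K$ for each $g$; since $\cV \succeq \cU$, one picks $U_g' \in \cU$ with $V_g^Q \subset U_g'$, and then $U_g' \cap U_g^K \neq \emptyset$ forces $U_g' \subset st(U_g^K, \cU)$, whence $Q \subset U^*$.

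Next, I would extract a maximal pairwise disjoint subfamily $\cQ$ of $\{Q \in P(\cV_F): Q \cap K \neq \emptyset\}$; the family is nonempty whenever $K$ is (since $P(\cV_F)$ covers $X$), and if $K = \emptyset$ the claim is vacuous. Disjointness together with the inclusion from the previous paragraph gives
\[
|\cQ| \cdot m_F(\cV) \leq \sum_{Q \in \cQ} \mu(Q) = \mu\Bigl(\bigsqcup_{Q \in \cQ} Q\Bigr) \leq \mu(U^*) \leq M_F(St\cU).
\]
To bound $N((St\cV)_F, K)$ from above by $|\cQ|$, for each $Q = \bigcap_g g^{-1} V_g^Q \in \cQ$ set $W_Q = \bigcap_g g^{-1}\, st(V_g^Q, \cV) \in (St\cV)_F$. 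Given $x \in K$, pick $Q' \in P(\cV_F)$ with $x \in Q'$; then $Q'$ meets $K$, so by maximality of $\cQ$ it meets some $Q \in \cQ$, and the argument of the previous paragraph (now with $\cV$ in both roles) yields $Q' \subset W_Q$ and hence $x \in W_Q$. Thus $\{W_Q : Q \in \cQ\}$ is a cover of $K$ by at most $|\cQ|$ elements of $(St\cV)_F$, giving $N((St\cV)_F, K) \leq |\cQ|$, and the desired inequality follows after taking the maximum over $K \in \cU_F$.

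The main technical point, rather than any single obstacle, is coordinating two different star operations at two different scales: one collapses an entire $\cV$-cell from $P(\cV_F)$ down into the single element $U^*$ of $(St\cU)_F$ (this is exactly where $\cV \succeq \cU$ enters), while the other inflates each chosen $Q \in \cQ$ up to a surrounding element $W_Q$ of $(St\cV)_F$ large enough to collectively cover $K$. The asymmetry in the statement, $m_F(\cV)$ on one side versus $M_F(St\cU)$ on the other, is precisely what this two-level star construction is tailored to produce, so once the two containment claims are set up the arithmetic is immediate.
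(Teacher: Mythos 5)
Your proof is correct and follows essentially the same route as the paper's: a maximal pairwise disjoint subfamily of $P(\cV_F)$-elements meeting $K$ whose $\cV$-stars cover $K$ (bounding $N((St\cV)_F,K)$ from above by its cardinality) and which all sit inside a single element of $(St\cU)_F$ via $\cV\succeq\cU$ (giving the measure bound). The only difference is cosmetic: the paper cites Misiurewicz's Lemmas 3.1 and 3.3 for the disjoint-family/star-covering step and for $St(\cV_F)\succeq(St\cV)_F$, whereas you re-derive these facts inline.
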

\begin{proof}
Observe $St(\cV_F)\succeq (St\cV)_F$ as done in \cite[Lemma 3.3]{Mi1}. Let $U\in \cU_F$ with
\begin{equation} \label{1307112152}
\max_{K\in \cU_F} N ((St \cV)_F, K)\le \max_{K\in \cU_F} N (St (\cV_F), K)= N(St(\cV_F), U)=:p.
\end{equation}
Using \cite[Lemma 3.1]{Mi1}, we could select a disjoint family $\cC\subset P(\cV_F)$ such that $C\cap U\neq\emptyset$ for each $C\in\cC$ and $U\subset \cup \{st(C, P(\cV_F)): C\in\cC\}$. Observing $P(\cV_F)\subset \cV_F$ by the definition, $\cC\subset \cV_F$ (and hence  $st(C,\cV_F)\in St (\cV_F)$ for each $C\in \cC$) and $U\subset \cup \{st(C,\cV_F): C\in\cC\}$, which implies $p\leq |\cC|$.
 Now say $U=\bigcap_{\gamma\in F}\gamma^{-1}U_\gamma$ with $U_\gamma\in \cU$ for each $\gamma\in F$, then
 \begin{eqnarray} \label{1307112220}
st(U,\cV_F)&= & \cup \left\{W\in \cV_F: W\cap \bigcap_{\gamma\in F}\gamma^{-1}U_\gamma\neq \emptyset\right\}\nonumber \\
&\subset & \bigcap_{\gamma\in F}st(\gamma^{-1}U_\gamma,\gamma^{-1}\cV)= \bigcap_{\gamma\in F}\gamma^{-1}st(U_\gamma,\cV)\nonumber \\
&\subset & \bigcap_{\gamma\in F}\gamma^{-1}st(U_\gamma,\cU)\ (\text{as}\ \cV\succeq\cU)\ \in (St\cU)_F\ (\text{as $U_\gamma\in \cU$ for all $\gamma\in F$}).
 \end{eqnarray}
As $\cC\subset \cV_F$ and $C\cap U\neq\emptyset$ for each $C\in\cC$,
 $\cup \cC\subset st(U,\cV_F)$ by the definition. Now recalling that $\cC\subset P (\cV_F)$ is a disjoint family and $p\leq |\cC|$, we have $$p\cdot m_F(\cV)\leq \mu(\cup \cC)\leq \mu (st(U,\cV_F))\le M_F(St\cU)$$
 by \eqref{1307112220}. Then the conclusion follows from \eqref{1307112152} and the above estimation.
\end{proof}

The following result generalizes
\cite[Theorem 7.1]{Mi1} to the case of $G$ being a general countable discrete infinite amenable group.

\begin{proposition} \label{1307122314}
Let $\alpha$ be the action $(X, G)$ with $G$ a countable discrete infinite amenable group and $\mu\in M (X, G)$. Assume that $\mu$ is $\alpha$-homogeneous and $h^a (G, X)< \infty$. Then the action $(X, G)$ is asymptotically $h$-expansive.
\end{proposition}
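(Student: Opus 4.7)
My plan is to adapt the scheme of Misiurewicz \cite[Theorem 7.1]{Mi1} to the amenable setting, using Lemma \ref{L-1} as the key technical tool together with the Ornstein-Weiss convergence theorem (Proposition \ref{ow-prop-convergence}). The ``only if'' direction is immediate from Corollary \ref{1207122131} combined with Corollary \ref{1307112259}, since asymptotic $h$-expansiveness forces $h^a(G,X) = h(G,X) < \infty$. I therefore focus on the ``if'' direction, i.e., showing $h^{a,*}(G,X) = 0$.

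Fix $\varepsilon > 0$; the goal is to exhibit $\mathcal{U}_0 \in \mathcal{C}_X^o$ with $h^a(G,X|\mathcal{U}_0) \leq \varepsilon$. For each $\mathcal{W} \in \mathcal{C}_X^o$ I introduce the quantities
$$a(\mathcal{W}) := \lim_n \frac{-\log M_{F_n}(\mathcal{W})}{|F_n|}, \qquad b(\mathcal{W}) := \lim_n \frac{-\log m_{F_n}(\mathcal{W})}{|F_n|}$$
along a F\o lner sequence $(F_n)$ of $G$. Both are manifestly non-decreasing under refinement of $\mathcal{W}$ (finer covers have smaller atoms), trivially $a(\mathcal{W}) \leq b(\mathcal{W})$, and both are bounded above by $h^a(G,X) < \infty$; for $a$ this follows from $M_F(\mathcal{W}) \geq 1/N(\mathcal{W}_F, X)$, so $a(\mathcal{W}) \leq h^a(G, \mathcal{W})$. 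The homogeneity inequality $M_F(D(\mathcal{W})) \leq c(\mathcal{W}) m_F(\mathcal{W})$ yields, after normalizing and passing to the limit, $b(\mathcal{W}) \leq a(D(\mathcal{W}))$. Setting $a^* := \sup_{\mathcal{W}} a(\mathcal{W})$, the sandwich $a(\mathcal{W}) \leq b(\mathcal{W}) \leq a(D(\mathcal{W})) \leq a^*$ combined with the monotonicity of both functions forces $\sup_\mathcal{W} b(\mathcal{W}) = a^* < \infty$.

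I then choose $\mathcal{U}_0 \in \mathcal{C}_X^o$ with $a(St\mathcal{U}_0) > a^* - \varepsilon/2$. For an arbitrary $\mathcal{W} \in \mathcal{C}_X^o$, I select $\mathcal{V} \in \mathcal{C}_X^o$ with $\mathcal{V} \succeq \mathcal{U}_0$, of sufficiently small mesh so that $St\mathcal{V} \succeq \mathcal{W}$, and fine enough that $b(\mathcal{V}) < a^* + \varepsilon/2$. Applying Lemma \ref{L-1} to the pair $(\mathcal{U}_0, \mathcal{V})$ and using $St\mathcal{V} \succeq \mathcal{W}$,
$$\max_{K \in (\mathcal{U}_0)_F} N(\mathcal{W}_F, K) \leq \max_{K \in (\mathcal{U}_0)_F} N((St\mathcal{V})_F, K) \leq \frac{M_F(St\mathcal{U}_0)}{m_F(\mathcal{V})}.$$
Taking logarithms, dividing by $|F_n|$, and passing to the limit (using the subadditivity of $m_{\mathcal{W}, \mathcal{U}_0}$ from Lemma \ref{welldefine} together with Proposition \ref{ow-prop-convergence} on the left) yields
$$h^a(G, \mathcal{W} | \mathcal{U}_0) \leq -a(St\mathcal{U}_0) + b(\mathcal{V}) < \varepsilon.$$
Since $\mathcal{W}$ was arbitrary, $h^a(G, X | \mathcal{U}_0) \leq \varepsilon$, and the arbitrariness of $\varepsilon$ completes the proof.

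The main obstacle I anticipate is justifying the convergence defining $a(\mathcal{W})$ and $b(\mathcal{W})$ along F\o lner sequences: the functions $F \mapsto -\log M_F(\mathcal{W})$ and $F \mapsto -\log m_F(\mathcal{W})$ are not obviously subadditive, so Proposition \ref{ow-prop-convergence} does not apply to them directly. My strategy is to bracket these quantities between Shannon entropies $H_\mu(\alpha_F)$ of Borel partitions subordinate to $\mathcal{W}$, for which Ornstein-Weiss applies via the classical measure-theoretic entropy formalism, and then to exploit the homogeneity of $\mu$, which makes cell measures uniform up to the multiplicative constant $c(\mathcal{W})$, to collapse the upper and lower brackets to a common limit. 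Once this convergence is in place, the remainder of the argument is a direct amenable-group translation of Misiurewicz's proof.
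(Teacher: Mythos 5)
Your overall architecture is the same as the paper's: Lemma \ref{L-1} plus the homogeneity inequality $M_F(D(\mathcal{W}))\le c(\mathcal{W})\,m_F(\mathcal{W})$, combined through Ornstein--Weiss, and your final chain
$\max_{K\in(\mathcal{U}_0)_F}N(\mathcal{W}_F,K)\le M_F(St\,\mathcal{U}_0)/m_F(\mathcal{V})$
is exactly the paper's estimate. The one genuine gap is the one you flag yourself, and your proposed repair does not work. The limits defining $a(\mathcal{W})$ and $b(\mathcal{W})$ need not exist: the functions $F\mapsto-\log M_F(\mathcal{W})$ and $F\mapsto-\log m_F(\mathcal{W})$ are not subadditive, and the natural two-sided bounds one can extract, namely
$1/N(\mathcal{W}_F,X)\le M_F(\mathcal{W})\le c(\mathcal{V}')/N((St\,\mathcal{V}')_F,X)$ for any $\mathcal{V}'$ with $\mathcal{W}\succeq D(\mathcal{V}')$, squeeze $-\tfrac{1}{|F_n|}\log M_{F_n}(\mathcal{W})$ only between $h^a(G,St\,\mathcal{V}')$ and $h^a(G,\mathcal{W})$, two quantities that genuinely differ for a fixed $\mathcal{W}$. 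So the brackets do \emph{not} collapse to a common limit for fixed $\mathcal{W}$ (they collapse only in the supremum over covers, both sides tending to $h^a(G,X)$), and the Shannon-entropy detour you sketch inherits the same defect, besides the extra complication that elements of $\mathcal{W}_F$ overlap and the homogeneity constant controls only the cells in $P(\mathcal{W}_F)$.

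The gap is repairable in either of two ways, and it is worth noting that your argument only ever uses one-sided bounds. Either replace $a$ by the $\liminf$ and $b$ by the $\limsup$ of the corresponding normalized quantities throughout (the sandwich $a\le b\le a(D(\cdot))$, the identification $\sup_{\mathcal{U}}\liminf=\sup_{\mathcal{W}}\limsup=h^a(G,X)$ via the two-sided covering-number bounds above, and the final estimate $h^a(G,\mathcal{W}|\mathcal{U}_0)\le\limsup\frac{1}{|F_n|}\log\frac{M_{F_n}(St\,\mathcal{U}_0)}{m_{F_n}(\mathcal{V})}$ all survive); or do what the paper does and never introduce $a,b$ at all: apply the homogeneity inequalities one more time to dominate $M_F(St\,\mathcal{U}_0)/m_F(\mathcal{V})$ by $c\cdot c'\cdot N(\mathcal{W}'_F,X)/N((St\,\mathcal{V}')_F,X)$ for suitable auxiliary covers, so that the normalized logarithm converges by Proposition \ref{ow-prop-convergence} to the well-defined difference $h^a(G,\mathcal{W}')-h^a(G,St\,\mathcal{V}')$, and then let both covers refine. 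With either repair your proof closes and coincides in substance with the paper's. (Minor remark: the proposition is a one-directional implication, so there is no ``only if'' part to establish here.)
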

\begin{proof}
As $\mu$ is $\alpha$-homogeneous, let $D:\cC^o_X\rightarrow \cC^o_X$ and $c: \cC^o_X\rightarrow (0,\infty)$ correspond to $\mu$.
Setting $\cU=\{X\}$ (and hence $St \cU= \{X\}$) in Lemma \ref{L-1} we obtain
\begin{equation}
\label{E-3}
N((St \cV)_F, X)\cdot m_F(\cV)\leq 1,\ \text{equivalently,}\ m_F(\cV)\le \frac{1}{N((St \cV)_F, X)}
\end{equation}
for each $\cV\in \cC^o_X$. And then once $\cV, \cW\in \cC^o_X$ satisfy $\cW\succeq D(\cV)$, $N(\cW_F, X)M_F(\cW)\geq 1$
and hence combining \eqref{E-2} with \eqref{E-3} we obtain
\begin{equation}
\label{E-5}
m_F(\cV)\geq \frac{M_F(\cW)}{c(\cV)}\geq \frac{1}{N(\cW_F, X)c(\cV)}\ \text{and}\
M_F(\cW)\leq c(\cV)m_F(\cV)\leq \frac{c(\cV)}{N((St\cV)_F, X)}.
\end{equation}

Now fix any $\cC, \cD\in \cC_X^o$. We choose $\cV\in \cC_X^o$ with $St \cV\succeq \cC, \cA\in \cC_X^o$ with $St\cA\succeq D(\cV)$ and $\cE\in \cC_X^o$ with $St \cE\succeq \cD$ by \cite[Proposition 3.5]{Mi1}; and then find $\mathcal{G}\in \cC_X^o$ with $\overline{\mathcal{G}}\succeq \cA$, $\cB\in \cC_X^o$ with $\cB\succeq \cA$ and $\cB\succeq \cE$ (and hence $St \cB\succeq \cD$) and $\cW\in \mathcal{C}_X^o$ with $\cW\succeq D (\cB)$. Choose $\cB'\in \cC^o_X$ such that $\cB'\succeq D(\cB)$, then by \eqref{E-2}, $m_F (\cB)\geq \frac{M_F(\cB')}{c(\cB)}> 0$ for any $F\in\cF_G$.
And hence we have
\begin{eqnarray*} \label{1307120037}
\max_{K\in (\overline{\mathcal{G}})_F} N((St\cB)_F, K)&\le & \max_{K\in \cA_F} N((St\cB)_F, K)\ (\text{as $\overline{\mathcal{G}}\succeq \cA$})\nonumber \\
&\le & \frac{M_F (St\cA)}{m_F (\cB)}\ (\text{using Lemma \ref{L-1}, as $\cB\succeq \cA$})\nonumber \\
&\le & c(\cV)\cdot c(\cB)\cdot \frac{N(\cW_F, X)}{N((St\cV)_F, X)}\ (\text{using \eqref{E-5}})
\end{eqnarray*}
as $St\cA\succeq D(\cV)$ and $\cW\succeq D (\cB)$, which implies 
$$h^a (G, St\cB| \overline{\mathcal{G}})\leq h^a(G,\cW)-h^a(G,St\cV)$$ from the definitions given at the end of section 5; and then (recalling that $G$ is a countable discrete infinite amenable group and observing $St \cV\succeq \cC$ and $St \cB\succeq \cD$)
\begin{equation*}
h^a (G, \cD| \overline{\mathcal{G}})\le h^a (G, St\cB| \overline{\mathcal{G}})\leq h^a(G,\cW)-h^a(G,St\cV)\le h^a (G, X)- h^a(G,\cC).
\end{equation*}
By the arbitrariness of $\cD\in \mathcal{C}_X^o$, one has
\begin{equation*}
h^{a, *} (G, X)\le h^a (G, X| \overline{\mathcal{G}})\le h^a (G, X)- h^a(G,\cC),
\end{equation*}
and then by the arbitrariness of $\cC\in \mathcal{C}_X^o$ and observing $h^a (G, X)< \infty$, we obtain $h^{a, *} (G, X)= 0$.
That is, $(X, G)$ is asymptotically $h$-expansive by Theorem \ref{1306160126}.
\end{proof}

Now we are ready to prove Theorem \ref{1307122130}.

\begin{proof}[Proof of Theorem \ref{1307122130}]
The ``only if'' part comes from Corollary \ref{1207122131}.

Now we prove the ``if'' part.
If $G$ is finite, it is trivial to see from the definition that $h^a (G, X)< \infty$ implies the finiteness of the group $X$, then the action $(X, G)$ is expansive and hence asymptotically $h$-expansive.
Now assume that $G$ is infinite.
Let $\mu\in M (X)$ be the normalized Haar measure of the compact metric group $X$.
  By Proposition \ref{1307122312} and Proposition \ref{1307122314}, we only need to show $\mu\in M (X, G)$, which is obvious by the assumption, as it is well known that each continuous surjective homomorphism of the compact metric group $X$ preserves $\mu$.
\end{proof}

It is natural to ask the following question which we haven't solved currently.

\begin{question}
Does Theorem \ref{1307122130} hold when the acting group is sofic?
\end{question}

At the end of this section, we present further discussions about actions of countable groups over a compact metrizable abelian group by continuous automorphisms.

Let $\alpha$ be an action of a countable group $G$ acting on a compact metrizable abelian group $X$ by continuous automorphisms.
On one hand, by \cite[Theorem 3.1]{ChungLi}, the action $(X, G)$ is expansive if and only if
there exist some $k\in \Nb$, some left $\Zb G$-submodule $J$ of $(\Zb G)^k$, and some  $A\in M_k(\Zb G)$ being invertible in $M_k (\ell^1 (G))$ such that
the left $\Zb G$-module $\widehat{X}$ is isomorphic to $(\Zb G)^k/J$ and the rows of $A$ are contained in $J$.
On the other hand, suppose that $G$ is amenable and $\widehat{X}$ is a finitely presented left $\Zb G$-module, and write $\widehat{X}$ as $(\Zb G)^k/(\Zb G)^n A$ for
some $k, n\in \Nb$ and $A\in M_{n\times k} (\Zb G )$, then $h^a (G, X)< \infty$ if and only if the additive map $(\Zb G)^k\rightarrow (\Zb G)^n$ sending $a$ to $a A^*$ is injective \cite[Theorem 4.11]{ChungLi}, which implies that $(X, G)$ is asymptotically $h$-expansive by Theorem \ref{1307122130}.
As a consequence, if $G$ is amenable, then for any non-zero divisor $f$ in $\Zb G$ which is not invertible in $\ell^1(G)$, the principal algebraic action $\alpha_f$, the canonical action of $G$ over $\widehat{\Zb G/\Zb G f}$, is asymptotically $h$-expansive but not expansive. See \cite[\S 3 and \S 4]{ChungLi} for more details.

\section*{Acknowledgements}

N-P. Chung would like to thank Lewis Bowen for asking and discussing the question ``When the actions of sofic groups are expansive, do they admit measures of maximal entropy?" which started this project. The joint work began when N.-P. Chung visited School of Mathematical Sciences, Fudan University on July, 2012. Part of the work was carried out while N.-P. Chung attended Arbeitsgemeinschaft ``Limits of Structures" at Oberwolfach on April, 2013. Most of this work was done while he was doing postdoc at Max Planck Institute, Mathematics in the Sciences, Leipzig.

Both authors thank Hanfeng Li for helpful suggestions which resulted in substantial improvements to this paper. The authors are also grateful to Tomasz Downarowicz, Ben Hayes and Wen Huang for helpful comments, and to Thomas Ward for bringing the reference \cite{Berg} into our attention which is related to Remark \ref{1409270036}.

This research was supported through the programme ``Research in Pairs" by the Mathematisches Forschungsinstitut Oberwolfach in 2013. We are grateful to MFO for a warm hospitality. N.-P. Chung was also supported by Max Planck Society, and
G. H. Zhang is also supported by FANEDD (201018) and NSFC (11271078).


\bibliographystyle{amsalpha}

\end{document}